\numberwithin{equation}{section} 
\newcommand{\delete}[1]{}
\newcommand{\add}[1]{{#1}}
\newcommand{\stkout}[1]{\ifmmode\text{\sout{\ensuremath{#1}}}\else\sout{#1}\fi}
\newtheorem{prop}{Proposition}[section]
\newaliascnt{lem}{prop} 
\newtheorem{lem}[lem]{Lemma}
\Crefname{lem}{Lemma}{Lemmas}
\newaliascnt{defi}{prop} 
 \newtheorem{defi}[defi]{Definition}
\Crefname{defi}{Definition}{Definitions}
\newaliascnt{cor}{prop} 
 \newtheorem{cor}[cor]{Corollary}
\newaliascnt{remark}{prop} 
 \newtheorem{remark}[remark]{Remark}
\newaliascnt{thm}{prop} 
 \newtheorem{thm}[thm]{Theorem}
\newaliascnt{example}{prop} 
\def\equationautorefname~#1\null{%
  (#1)\null
}
\DeclareMathOperator{\diam}{diam} 
\newcommand{\R}{\ensuremath{\mathbb{R}}}
\newcommand{\N}{\ensuremath{\mathbb{N}}}
\renewcommand{\S}{\ensuremath{\mathbb{S}}}
\newcommand*\diff{\mathop{}\!\mathrm{d}}
\newcommand{\defeq}{\vcentcolon=}
\newcommand{\eqdef}{=\vcentcolon}
\newcommand{\CalE}{\ensuremath{\mathcal{E}}}
\newcommand{\CalL}{\ensuremath{\mathcal{L}}}
\newcommand{\CalA}{\ensuremath{\mathcal{A}}}
\newcommand{\CalW}{\ensuremath{\mathcal{W}}}
\DeclareMathOperator{\grad}{grad}
\newcommand{\abs}[1]{\ensuremath{\lvert #1\rvert}}
\newcommand{\norm}[2]{\ensuremath{\Vert #1 \Vert_{#2}}}
\newcommand{\dtzero}{\left.\frac{\diff}{\diff t}\right\vert_{t=0}}
\DeclareMathOperator{\Id}{Id}
\DeclareMathOperator{\CalV}{\ensuremath{\mathcal{V}}}
\title{The volume-preserving Willmore flow}
\author{Fabian Rupp\thanks{Institute of Applied Analysis, Ulm University, Helmholtzstra\ss e 18, 89081 Ulm, Germany.} \thanks{Present address: Faculty of Mathematics, University of Vienna, Oskar-Morgenstern-Platz 1, 1090 Vienna, Austria. \texttt{fabian.rupp@univie.ac.at}.}}
\begin{document}
\date{\today}
\maketitle
\begin{abstract}
\noindent\textbf{Abstract:} We consider a closed surface in $\R^3$ evolving by the  volume{\-}preserving Willmore flow and prove a lower bound for the existence time of smooth solutions. For spherical initial surfaces with Willmore energy below $8\pi$ we show long time existence and convergence to a round sphere by performing a suitable blow-up and by proving a constrained {\L}ojasiewicz--Simon inequality. 
\end{abstract}

\bigskip
\noindent \textbf{Keywords:} Willmore flow, fixed volume, blow-up, {\L}ojasiewicz--Simon inequality, nonlocal geometric evolution equation.
 
 \noindent \textbf{MSC(2020)}: 53E40 (primary), 35B40, 35K41  (secondary).

\section{Introduction and main results}
For an immersion $f\colon\Sigma\to\R^3$ of a compact, connected {and oriented} surface $\Sigma$ without boundary, its \emph{Willmore energy} is defined by
\begin{align}\label{eq:defWillmore}
{{\CalW}}(f) \defeq \frac{1}{4} \int_{\Sigma} H^2\diff \mu.
\end{align}

Here $\mu=\mu_f$ denotes the area measure, induced by the pull-back of the Euclidean metric $g_f \defeq f^*\langle \cdot, \cdot\rangle$, and $H  = H_f\defeq \langle \vec{H}_f, \nu_f\rangle$ denotes the (scalar) mean curvature with respect to $\nu=\nu_f \colon\Sigma\to \S^2$, the unique unit normal along $f$ induced by the chosen orientation on $\Sigma$, {see \eqref{eq:def normal} below.} 
By the Gau\ss--Bonnet theorem, {the Willmore energy \eqref{eq:defWillmore}}  only differs by a topological constant from the squared $L^2$-norm of $A^0$, the trace-free part of the second fundamental form. {Indeed, we have}
\begin{align}\label{eq:WvstildeW}
\overline{\CalW}(f)\defeq  \int_{\Sigma}\abs{A^0}^2\diff \mu = 2\CalW(f)-4\pi\chi(\Sigma),
\end{align}
where $\chi(\Sigma)$ denotes the Euler characteristic. Note that both energies are not only \emph{geometric}, i.e.\ invariant under diffeomorphisms of $\Sigma$, but also \emph{conformally invariant}, i.e.\ invariant under \add{rigid motions and inversions provided the center of inversion does not lie on $f(\Sigma)$.} As already observed in \cite{Willmore65}, $\CalW(f)\geq 4\pi$ with equality only for round spheres. Therefore, $\CalW$ and hence also ${\overline{\CalW}}$ are a natural way to measure the total bending of an immersed surface with various applications {also beyond differential geometry, for instance in the  study of biological membranes \cite{Canham,Helfrich}, general relativity \cite{Hawking} and image restoration \cite{DroskeRumpf}.} 


The analysis of the \emph{Willmore flow}, i.e.\ the $L^2$-gradient flow associated to the energy $\overline{\CalW}$, started with the work of Kuwert and Schätzle.
In \cite{KSGF}, they proved a lifespan theorem under the assumption that the \emph{concentration of curvature} of the initial datum is controlled. In \cite{KSSI}, this was used to set up a blow-up procedure and to prove convergence to a round sphere if the energy is sufficiently small. Then in \cite{KSRemovability}, long-time existence and convergence was shown for the flow of spherical immersions with initial datum $f_0\colon\S^2\to\R^3$ satisfying ${\CalW}(f_0)\leq 8\pi$.
The threshold $8\pi$ already appears in the celebrated \emph{Li--Yau inequality for the Willmore energy} \cite[Theorem 6]{LiYau}, yielding that
\add{$f$ is an embedding if $\CalW(f)<8\pi$.}
Furthermore, this threshold is in fact sharp for the convergence result in \cite{KSRemovability}, see  \cite{MS2002} for numerical experiments and \cite{Blatt} for an analytic proof. \add{It remains an open problem to prove or disprove whether this singularity happens in finite time.}

Recently, similar convergence results have been established for the Willmore flow of \emph{tori of revolution} \cite{DMSS20} with the same energy threshold and \add{also for the Willmore flow of \emph{Hopf-tori} in the three-sphere $\S^3$ \cite{RubenConv}}. 

Moreover, various authors have extended the methods of Kuwert and Schätzle to related geometric evolution equations \add{also involving constraints}, including, \add{for instance}, the \emph{surface diffusion flow} \add{\cite{McCoyWheelerWilliams,MR2735555,MR2898774}}, \emph{Helfrich-type flows} \cite{McCoyWheeler,BlattHelfrich} \add{and other higher order flows \cite{MR3929519,MR3672995}}. In \cite{Jachan}, the \emph{area-preserving Willmore flow} was studied. 
{Related constrained evolution problems for the \emph{elastic energy} of curves have been considered in \cite{DKS}, \cite{DPL17} and \cite{RuppSpener}, for instance.}

In this article, we \delete{will consider}\add{introduce} a \emph{constrained gradient flow}, which evolves an initial {immersion} $f\colon\Sigma\to\R^3$ such that $\overline{\CalW}$ decreases as fast as possible, while $\CalV$, the \emph{signed volume} of $f(\Sigma)$, defined by
	\begin{align}\label{eq:defVol}
		\CalV(f)
		\defeq -\frac{1}{3} \int_{\Sigma}\langle f, \nu\rangle\diff\mu,
	\end{align}
is kept constant.\delete{, see \Cref{sec:volume} for a detailed discussion on this notion of volume.}
{The analogous problem for the \emph{mean curvature flow} was introduced by Huisken \cite{Huisken}.}
More explicitly, we say that a smooth family of {immersions} $f\colon [0,T)\times \Sigma \to\R^3$ is a \emph{volume-preserving Willmore flow}, if it satisfies the geometric evolution equation
\begin{align}\label{eq:VpWF}
\partial_t f &= \left(-\Delta H - | A^{0}|^2 H + \lambda\right)\nu,
\end{align}
where $\Delta=\Delta_f$ is the Laplace--Beltrami operator on $(\Sigma, \add{g_f})$ and the Lagrange multiplier $\lambda \defeq \lambda(t) \defeq \lambda(f_t)$ depends on the immersion $f_t\defeq f(t, \cdot)$ and is given by
\begin{align}\label{eq:deflambda}
\lambda \defeq \frac{\int_{\Sigma}  | A^{0}|^2H \diff \mu}{\CalA(f)},
\end{align}
where $\CalA(f)\defeq \int_{\Sigma} \diff \mu_f$ denotes the total area of $\Sigma$. In \Cref{subsec:geom evol}, we will prove that \eqref{eq:VpWF} actually decreases $\overline{\CalW}$, hence also $\CalW$ by \eqref{eq:WvstildeW}, while keeping $\CalV$ fixed. {Note that the energies defined in \eqref{eq:defWillmore} and \eqref{eq:WvstildeW} do not change, if we reverse the  orientation on $\Sigma$. While the normal $\nu$ and hence the volume \eqref{eq:defVol} change sign, the flow equation \eqref{eq:VpWF} with $\lambda$ as in \eqref{eq:deflambda} is also invariant under reversing the orientation.}

\add{The \emph{stationary solutions} to the volume-preserving Willmore flow \eqref{eq:VpWF} are characterized as solutions of the PDE
	\begin{align}\label{eq:stationary}
		\Delta H + \abs{A^0}^2H = \lambda\quad \text{for some }\lambda\in\R.
	\end{align}
	Formally, \eqref{eq:stationary} is the Euler--Lagrange equation of critical points of $\CalW$ (and hence of $\overline{\CalW}$) subject to a volume constraint, so we refer to solutions of \eqref{eq:stationary} as \emph{(volume)-constrained Willmore immersions,} see \Cref{lem:first Vari V and W} and \Cref{lem:constr Willmore} below. By changing from $\lambda$ to $-\lambda$, \eqref{eq:stationary} is preserved under reversing the orientation on $\Sigma$. Note that while the energies $\CalW$ and $\CalV$ may not be well-defined, \eqref{eq:stationary} makes sense even if $\Sigma$ is not compact, and we still term noncompact solutions of \eqref{eq:stationary} constrained Willmore immersions.}

\add{Our first main contribution extends the energy concentration-based lower bound on the lifespan for the Willmore flow \cite{KSGF} to the volume-preserving Willmore flow.}
\delete{In addition to the mathematical challenges of the Willmore flow, the \emph{nonlocal} nature of the Lagrange multiplier in \eqref{eq:deflambda} is a potential source of difficulties, making it unclear if controlling the concentration of curvature suffices to control the entire evolution. Nevertheless, we {are able to} extend the results on the Willmore flow in \cite{KSGF,KSSI,KSRemovability,CFS09} to the constrained flow \eqref{eq:VpWF}. The following result gives a lower bound on the lifespan.
}
\begin{thm}\label{thm:Lifespan}
	There exists an absolute constant $\bar{\varepsilon}>0$ such that if $f_0\colon \Sigma\to \R^3$ is an {immersion} with ${\CalW}(f_0)\leq K$ and $\rho>0$ is chosen such that
	\begin{align}
		\int_{B_\rho(x)} \abs{A_0}^2\diff \mu_0 \leq\varepsilon<\bar{\varepsilon} \quad\text{for all }x\in \R^3,
	\end{align}
	then the maximal existence time $T$ of the volume-preserving Willmore flow with initial data $f_0$ satisfies
	\begin{align}
		T> \hat{c}\rho^4,
	\end{align}
	for some \add{$\hat{c}=\hat{c}(K, \chi(\Sigma))>0$} and furthermore for all $0\leq t\leq\hat{c}\rho^4$ it holds
	\begin{align}\label{eq:LifeSpanAEstimate}
		\int_{B_\rho(x)} \abs{A}^2\diff \mu \leq \hat{c}^{-1}\varepsilon\quad \text{for all } x\in \R^3.
	\end{align}
\end{thm}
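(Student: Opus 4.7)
The plan is to adapt the Kuwert--Schätzle lifespan theorem for the Willmore flow \cite[Theorem 1.2]{KSGF} to the constrained equation \eqref{eq:VpWF}. Short-time existence of a smooth solution follows from standard theory for quasilinear fourth-order parabolic systems; the nonlocal but spatially constant scalar $\lambda(t)$ poses no essential difficulty at the level of local well-posedness. By \Cref{subsec:geom evol}, the flow is energy decreasing for $\overline{\CalW}$ and preserves $\CalV$, so together with \eqref{eq:WvstildeW} we obtain the a priori bounds $\int_\Sigma H^2 d\mu\leq 4K$ and $\int_\Sigma|A|^2 d\mu\leq C(K,\chi(\Sigma))$ along the flow.

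The core of the argument is a localized curvature estimate. Fix $x\in\R^3$, choose a cut-off $\tilde{\gamma}\in C_c^\infty(B_\rho(x))$ with $\|\nabla^k\tilde{\gamma}\|_\infty\leq c\rho^{-k}$, and set $\gamma_t\defeq\tilde{\gamma}\circ f_t$. Differentiating $\int_\Sigma\gamma^s|A|^2 d\mu$ along \eqref{eq:VpWF} and integrating by parts, the terms generated by the unconstrained part $-\Delta H - |A^0|^2 H$ of the velocity are handled exactly as in \cite[Section 3]{KSGF}, producing a good term $\int_\Sigma\gamma^s|\nabla^2 A|^2 d\mu$ on the left-hand side together with localized bad terms that are absorbable under smallness of the local curvature concentration. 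The Lagrange multiplier piece $\lambda\nu$ contributes additional terms of the schematic form $\lambda\int_\Sigma\gamma^s P(A)d\mu$, where $P(A)$ denotes a polynomial of degree three in $A$ arising both from $\partial_t|A|^2$ and from the evolution $\partial_t d\mu = -HV d\mu$; crucially, since $\lambda$ is constant on $\Sigma$, no derivatives of $\lambda$ appear and the new terms remain localized against the cut-off.

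The technical heart of the proof is therefore the control of $|\lambda|$. Combining \eqref{eq:deflambda} with Cauchy--Schwarz yields
$$|\lambda|\CalA(f)\leq \Big(\int_\Sigma|A^0|^4 d\mu\Big)^{1/2}\Big(\int_\Sigma H^2 d\mu\Big)^{1/2}\leq 2\sqrt{K}\,\|A^0\|_{L^4(\Sigma)}^2,$$
and $\|A^0\|_{L^4(\Sigma)}^2$ is controlled via a covering argument together with the Michael--Simon Sobolev inequality (cf.\ \cite[Lemma 4.2]{KSGF}), producing a bound by localized curvature integrals that can be absorbed into the good term $\int_\Sigma\gamma^s|\nabla^2 A|^2 d\mu$ once $\varepsilon$ is small. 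The factor $\CalA(f)^{-1}$ appearing in $\lambda$ is handled by combining the a priori Willmore bound with a lower area bound derived from the preserved volume $\CalV(f_0)$, at the cost of enlarging the constant $\hat{c}$ to depend on this data, so that altogether the $\lambda$-contributions are dominated, with constants depending on $K$ and $\chi(\Sigma)$, by the Kuwert--Schätzle terms.

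Combining these ingredients produces a Gronwall-type differential inequality for $\alpha(t)\defeq\sup_{x\in\R^3}\int_{B_\rho(x)}|A|^2 d\mu$ of exactly the form treated in \cite[Theorem 1.2]{KSGF}. Comparison (with $\bar{\varepsilon}$ chosen sufficiently small) then gives \eqref{eq:LifeSpanAEstimate} on $[0,\hat{c}\rho^4]$ and excludes curvature concentration before that time, which in turn precludes blow-up via the standard Kuwert--Schätzle pointwise estimates. The principal obstacle is the nonlocal character of $\lambda$: although it is a spatially constant scalar introducing no new derivatives, matching the scaling of its contributions with those of the cut-off terms---and in particular absorbing the denominator $\CalA(f)$ without losing the correct powers of $\rho$---is the main technical point beyond the unconstrained Willmore case.
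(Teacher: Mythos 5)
Your overall architecture (localized Kuwert--Schätzle estimates plus a separate control of $\lambda$) matches the paper's, but two of your key steps do not work as stated. First, you propose to control the factor $\CalA(f_t)^{-1}$ in \eqref{eq:deflambda} by a lower area bound ``derived from the preserved volume $\CalV(f_0)$''. This fails: the theorem is stated for arbitrary immersions, for which the signed volume \eqref{eq:defVol} may vanish, so no such lower bound exists; and even when $\CalV(f_0)\neq 0$ the resulting constant would depend on $\CalV(f_0)$, which both contradicts $\hat c=\hat c(K,\chi(\Sigma))$ and destroys the parabolic scale invariance (under $f\mapsto\rho^{-1}f(\rho^4\cdot,\cdot)$ the volume scales like $\rho^{-3}$) that is used to reduce to $\rho=1$. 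The paper instead deduces $\CalA(f_t)^{-1}\leq C\rho^{-2}$ directly from the non-concentration hypothesis via Simon's monotonicity formula (\Cref{lem:control rho via V}); this is the scale-correct substitute you need.

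Second, your absorption of the $\lambda$-contributions ``into the good term $\int\gamma^s\abs{\nabla^2A}^2$'' conflates local and global quantities. The bound on $\abs{\lambda}$ involves $\int_\Sigma\abs{A}^3\diff\mu$ over the whole surface, and after Michael--Simon and a covering argument it is controlled by the \emph{global} quantity $\int_\Sigma\abs{\nabla^2A}^2\diff\mu$, which cannot be absorbed into the ball-localized good term $\int\abs{\nabla^2A}^2\gamma^4\diff\mu$ of the differential inequality at a fixed center $x$; moreover $\int_\Sigma\abs{\nabla^2A}^2\diff\mu$ is not controlled pointwise in time, only after time integration via the energy dissipation. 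The paper's mechanism is different: Young's inequality splits the local $\lambda$-term into an absorbable piece $\varepsilon\int\abs{A}^6\gamma^4\diff\mu$ plus $C\abs{\lambda}^{4/3}\int\abs{A}^2\gamma^4\diff\mu$ (\Cref{prop:NewLemma19}), and a separate global space-time argument (covering, H\"older in time, and a self-absorption of $\int_0^t\abs{\lambda}^{4/3}\diff\tau$ via Young) yields $\int_0^t\abs{\lambda}^{4/3}\diff\tau\leq C(K,\chi(\Sigma))(t^{1/2}\rho^{-2}+t\rho^{-4})$ (\Cref{prop:lambda4/3Nonconcentration}), which is then fed back into the integrated local estimate before running the continuity argument; the exponent $4/3$ is forced by the parabolic scaling, and your pointwise-in-time bound on $\abs{\lambda}$ has no scale-invariant analogue that closes the Gronwall comparison. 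Without these two repairs the argument does not close.
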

\delete{Here we identify the measure $\mu=\mu_t$ on $\Sigma$ with its \emph{pushforward} $(f_t)_{\ast}\mu$ on $\R^3$. Moreover, we suppress the dependence on the time variable and write
	\begin{align}\label{eq:mu_t measure R^3}
		\int_{B_\rho(x)} \psi\diff \mu \defeq  \int_{f_t^{-1}(B_\rho(x))}\psi(t,y)\diff \mu_t(y),
	\end{align}
	for $0\leq t<T, x\in \R^3$, $\rho>0$ and any $\psi\colon [0,T)\times\Sigma\to \R$ such that the latter integral exists.}
\add{Following the notation of \cite{KSGF}, the integrals above have to be understood over the preimages under $f_0$ and $f_t$, respectively.} 

\add{The proof of \Cref{thm:Lifespan} follows the concentration-compactness strategy developed by Kuwert and Sch\"atzle for the Willmore flow in \cite{KSGF}. Since this method is relying on smallness of the curvature in small balls, it is \emph{intrinsically local,} making the \emph{nonlocal} nature of the Lagrange multiplier a major difficulty. To compensate this, the $L^{4/3}$-norm of $\lambda$ naturally appears in these estimates, a scale-invariant quantity (see \Cref{rem:ParabolicScaling}) which we can control under certain assumptions, see \Cref{sec:Int est Lambda L43} below. In particular, we do not assume a-priori $L^{\infty}$-bounds on $\lambda$ as in \cite{McCoyWheelerWilliams,MR2735555}. However, to show this correct integrability, we have to allow the constant $\hat{c}$ to depend on an upper bound for the initial energy, as well as on the topology of $\Sigma$, in contrast to \cite[Theorem 1.2]{KSGF}.}
\delete{Note that in contrast to the classical lifespan result on the Willmore flow, cf. \cite[Theorem 1.2]{KSGF}, the constant $\hat{c}$ depends on an upper bound for the initial energy, as well as on the topology of $\Sigma$.}

Our \add{second main contribution} \delete{main result} extends the  convergence result of Kuwert and Schätzle \cite[Theorem 5.2]{KSRemovability} in the following
 
	\begin{thm}\label{thm:conv main}
	 	\add{Let $f_0 \colon\S^2\to\R^3$ be a smooth immersion such that ${\CalW}(f_0)\leq 8\pi$ and $\CalV(f_0)\neq 0$.} Then the volume-preserving Willmore flow with initial datum $f_0$ exists for all times and converges smoothly, after reparametrization, to a round sphere with radius $R = \sqrt[3]{\frac{3 \abs{\CalV(f_0)}}{4\pi}}$ as $t\to\infty$.
	\end{thm}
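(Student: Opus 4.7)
I would adapt the strategy of \cite{KSRemovability} to the constrained flow \eqref{eq:VpWF}: long-time existence via \Cref{thm:Lifespan} and a blow-up argument, subconvergence to a constrained Willmore surface as $t\to\infty$, and full convergence via a constrained Łojasiewicz-Simon inequality. Since $\overline\CalW$ is nonincreasing along \eqref{eq:VpWF} and $\CalW(f_0)<8\pi$, the Li-Yau inequality \eqref{eq:LiYau} keeps $f_t$ embedded as long as it exists. If the maximal existence time $T$ were finite, \Cref{thm:Lifespan} would force concentration $\int_{B_{\rho_j}(x_j)}\abs{A^0}^2\diff\mu_{t_j}\geq\bar\varepsilon$ at some points $x_j\in\R^3$ and scales $\rho_j\searrow 0$ along a sequence $t_j\nearrow T$. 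Under the parabolic rescaling $\tilde f_j(\tilde t,\cdot)=\rho_j^{-1}(f(t_j+\rho_j^4\tilde t,\cdot)-x_j)$ the rescaled Lagrange multiplier is $\tilde\lambda_j=\rho_j^3\lambda$, which vanishes in the limit upon using the dissipation identity $\int_0^T\int\abs{\partial_tf}^2\diff\mu\diff t\leq\overline\CalW(f_0)$ together with Cauchy-Schwarz and the uniform Willmore bound to control $\lambda$. Hence any blow-up limit is a nontrivial, properly immersed entire solution of the \emph{unconstrained} Willmore flow with $\CalW<8\pi$; by the removable-singularity and classification results of \cite{KSSI,KSRemovability} it must be a plane, contradicting concentration, so $T=\infty$.

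The fixed positive volume $\CalV(f_t)=\CalV(f_0)$ combined with the Willmore bound yields uniform area and diameter estimates via the isoperimetric inequality and Simon's diameter bound, and the dissipation identity allows us to select $t_j\to\infty$ with $\|\partial_tf_{t_j}\|_{L^2}\to 0$. Simon's $\CalW$-compactness applied to suitable translates of the $f_{t_j}$ then yields a smooth subsequential limit embedding $f_\infty\colon\S^2\to\R^3$ satisfying the Euler-Lagrange equation $\Delta H_\infty+\abs{A^0_\infty}^2H_\infty=\lambda_\infty$ for some $\lambda_\infty\in\R$, with $\CalV(f_\infty)=\CalV(f_0)$ and $\CalW(f_\infty)<8\pi$.

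To remove the subsequence dependence and identify the limit, I would establish a constrained Łojasiewicz-Simon inequality at $f_\infty$ of the form
\begin{align*}
\abs{\overline\CalW(f)-\overline\CalW(f_\infty)}^{1-\theta}\leq C\,\|\Delta H+\abs{A^0}^2H-\lambda(f)\|_{L^2(\diff\mu_f)},\qquad\theta\in(0,\tfrac12],
\end{align*}
valid in a $C^{2,\alpha}$-neighbourhood of $f_\infty$ modulo translations. Combined with the dissipation identity by a standard ODE argument (cf.\ \cite{CFS09,RuppSpener,DMSS20}), this promotes subconvergence to convergence of the entire trajectory, and smooth convergence then follows from interior parabolic regularity together with iteration of \eqref{eq:LifeSpanAEstimate}. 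A final rigidity step---using Bryant's classification of Willmore spheres together with $\CalW(f_\infty)<8\pi$ and an analysis of the nonlocal term $\lambda_\infty$---identifies $f_\infty$ as the round sphere of volume $\CalV(f_0)$, hence of radius $R=\sqrt[3]{3\CalV(f_0)/(4\pi)}$. The main technical obstacle is the Łojasiewicz-Simon step itself: the nonlocal volume constraint $\{\CalV=\CalV(f_0)\}$ must be parametrised analytically near $f_\infty$, the corresponding constrained Hessian shown to be Fredholm modulo the finite-dimensional kernel coming from Euclidean invariance, and the abstract Chill-type Łojasiewicz-Simon machinery then applied in this projected setting.
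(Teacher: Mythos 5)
Your overall architecture (lifespan bound, blow-up, removability plus Bryant, constrained {\L}ojasiewicz--Simon) matches the paper's, but there are two genuine gaps in the middle of the argument. The more serious one is the subconvergence step after $T=\infty$: you assert that ``Simon's $\CalW$-compactness applied to suitable translates of the $f_{t_j}$'' yields a smooth limit embedding of $\S^2$. Smooth compactness of the \emph{unrescaled} surfaces requires uniform bounds on all derivatives of $A$, and these are only available (via \Cref{thm:HigherOrderEstimatesTLocalized}) at the spatial scale $r_t$ at which the curvature concentration $\kappa(t,r_t)$ is small. Nothing in your proposal rules out $r_{t_j}\to 0$ as $t_j\to\infty$, i.e.\ a genuine blow-up at infinite time, which would destroy compactness of the $f_{t_j}$ themselves even though no finite-time singularity occurs. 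The paper handles this by constructing the concentration limit uniformly in $r_j\in(0,\infty]$ (\Cref{thm:BlowUpExistence}, \Cref{lem:existence of r_t}, \Cref{prop:nontrivial blow up}), showing it is a compact Willmore sphere, and then invoking \Cref{thm: no compact blowups} to exclude $r_j\to 0$ and $r_j\to\infty$. Crucially, \Cref{thm: no compact blowups} is itself proved \emph{using} the {\L}ojasiewicz--Simon stability lemma (\Cref{lem:LojaAsymStabil}) applied to the hypothetical compact blow-up, via a diameter contradiction; so the logical order is the reverse of yours: the constrained {\L}ojasiewicz--Simon machinery is needed \emph{before} one can even extract a convergent subsequence of the unrescaled flow, not only afterwards to upgrade subconvergence.

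The second gap is your treatment of the Lagrange multiplier in the blow-up. The identity $\tilde\lambda_j=\rho_j^3\lambda$ does not by itself force the limit to be an unconstrained Willmore surface, since $\lambda(t)$ could a priori blow up as $t\nearrow T$ fast enough to compensate $\rho_j^3\to 0$; moreover the relevant quantity is a time integral over the rescaled interval, not a pointwise value. The paper's mechanism is the estimate $\int_0^T\lambda^2\CalA(f_t)\,\diff t<\infty$ (\Cref{prop:Lambda^2AVpWF}), whose proof combines \Cref{lem:lambdascaling}, Simon's diameter bound and, essentially, the \emph{reverse isoperimetric inequality} (\Cref{thm:ReverseIsoperimetric}) to bound $\CalA(f_t)$ by the conserved volume under $\CalW<8\pi$; the scale invariance of this integral then kills the gradient term in the limit (see \eqref{eq:blowupStatic1}--\eqref{eq:blowupStatic3}). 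Your ``Cauchy--Schwarz plus uniform Willmore bound'' sketch omits the area control by the volume, which is precisely where the hypothesis $\CalW(f_0)<8\pi$ enters beyond embeddedness. Finally, to identify the limit you need $\lambda_\infty=0$ before applying Bryant; for a compact limit with nonzero volume this follows from the scaling argument of \Cref{lem:constr Willmore}, which your ``analysis of the nonlocal term $\lambda_\infty$'' should be made to say explicitly.
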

	\add{As in \cite{KSSI,KSRemovability}, the strategy to prove \Cref{thm:conv main} is a blow-up construction based on the lifespan bound in \Cref{thm:Lifespan}. The blow-up limit is a constrained Willmore immersion and in general noncompact. However, apart from a small energy regime \cite{MWHelfrich}, for $\lambda\neq 0$ there is no classification of solutions to \eqref{eq:stationary}. Nonetheless, under an $L^2$-integrability assumption on the Lagrange multiplier, we are able to conclude that the blow-up is a Willmore immersion, i.e.\ $\lambda=0$ in \eqref{eq:stationary}. In the energy regime of \Cref{thm:conv main}, this integrability can be deduced from a reverse isoperimetric inequality \cite{Schygulla,Blatt}.
		 Together with the removability result \cite{KSRemovability} and the classification of Willmore spheres \cite{Bryant1984}, we then conclude that the blow-up limit is compact. A result of independent interest is that the volume-constrained Willmore functional satisfies an appropriate constrained version of the {\L}ojasiewicz--Simon gradient inequality in the sense of \cite{Rupp}. Finally, this inequality yields a stability result in the spirit of \cite{CFS09} from which we conclude global existence and convergence of the flow if a blow-up is compact.
	
	Note that in view of \cite[Theorem 5.2]{KSRemovability}, we believe that the reparametrization in \Cref{thm:conv main} is not necessary, but a common consequence when relying on the {\L}ojasiewicz--Simon gradient inequality, cf.\ \cite[Lemma 4.1]{CFS09}.}
\delete{As we shall see later, the reason for restricting to the case of spherical surfaces in \Cref{thm:conv main} is the classification of \emph{Willmore spheres} by Bryant \cite{Bryant1984}.}

	\delete{
	 We remark that \emph{stationary solutions} to the volume-preserving Willmore flow are the \emph{(volume-)constrained Willmore immersions}, i.e.\ critical points of $\CalW$ (and hence of $\overline{\CalW}$) with respect to a volume constraint. They are characterized by the equation
	 \begin{align}\label{eq:stationary}
	 	\Delta H + \abs{A^0}^2H = \lambda\quad \text{for some }\lambda\in\R.
	 \end{align}
	 By changing from $\lambda$ to $-\lambda$, \eqref{eq:stationary} is preserved under reversing the orientation on $\Sigma$. Note that while the energies $\CalW$ and $\CalV$ may not be well-defined, \eqref{eq:stationary} still makes sense if $\Sigma$ is not compact. 
}

This article is structured as follows. First,  we recall some definitions and compute the evolution of relevant quantities in \Cref{sec:prelim}. \add{\Cref{sec:loc int est} is devoted to proving a key ingredient of the paper: localized integral estimates in the spirit of \cite{KSGF}, which now require an $L^{4/3}$ in time integrability of the Lagrange multiplier.} Combined with the careful \add{a-priori} estimates of $\lambda$ which we establish in \Cref{sec:Int est Lambda}, they are then used to prove the lifespan bound, {\Cref{thm:Lifespan}}, in \Cref{sec:lifespan}. 
\add{In \Cref{sec:BlowUp} we construct a blow-up limit and study its properties. We then \add{deduce a convergence result for compact blow-ups in the spirit of \cite{CFS09}} by \add{proving} a \emph{constrained {\L}ojasiewicz--Simon gradient inequality} in \Cref{sec:blow up not compact}, before proving \Cref{thm:conv main} in \Cref{sec:conv}.
For the sake of readability, some details and well-known arguments have been moved to the appendix and may be skipped by the eager or experienced reader.}

\section{Preliminaries}\label{sec:prelim}
In this section, we will review the geometric and analytic background and prove some first properties of the flow \eqref{eq:VpWF}.
In the following, $\Sigma$ will always denote a compact and connected oriented surface  without boundary. {Note that in contrast to \cite{KSGF,KSSI}, we work exclusively in codimension one, which simplifies the relevant geometric objects.}

\subsection{Immersed and embedded surfaces in \texorpdfstring{$\R^3$}{\R^3}}\label{subsec:geometry}

An immersion $f\colon\Sigma\to\R^3$  induces the pullback metric $g=f^{\ast}\langle\cdot, \cdot\rangle$ on $\Sigma$, which in local coordinates is given by
\begin{align}
g_{ij}\defeq \langle\partial_i f, \partial_j f\rangle,
\end{align} 
where $\langle \cdot, \cdot\rangle$ denotes the Euclidean metric. {The chosen orientation on $\Sigma$ determines a unit normal field $\nu\colon\Sigma\to\S^2$ along $f$, which in local coordinates in the orientation  is given by
\begin{align}\label{eq:def normal}
	\nu = \frac{\partial_1 f\times \partial_2 f}{\abs{\partial_1 f\times \partial_2 f}}.
\end{align} 
We will always work with this unit normal vector field.
}
The second fundamental form of $f$ is then given by projecting the second derivatives of $f$ in normal direction, i.e.\ in local coordinates we define $ A_{ij} \defeq \langle\partial_i\partial_j f, \nu\rangle$. The mean curvature and the tracefree part of the second fundamental form are
\begin{align}
H \defeq g^{ij}A_{ij} \text{ and }  A^{0}_{ij} \defeq  A_{ij} - \frac{1}{2} {H}g_{ij},
\end{align}
where $g^{ij}\defeq \left(g_{ij}\right)^{-1}$. {Note that $g$, $A$ and $A^0$ are scalar valued $2\choose 0$-tensors.}   Moreover, for any vector field $X$ along $f$, we have the tangential and normal projections
\begin{align}
	P^{\top} X \defeq P^{\top_f}X &\defeq  g^{ij}\langle X, \partial_i f\rangle \partial_j f \\
	P^{\perp} X\defeq P^{\perp_f}X &\defeq X-P^{\top}X.
\end{align}
The Levi-Civita connection $\nabla=\nabla_f$ induced by $g$ extends uniquely to a connection on tensors, which we also denote by $\nabla$. For \add{an orthonormal} basis $\{e_1, e_2\}$ of the tangent space, the Codazzi--Mainardi equations then yield
\begin{align}
\nabla_i H &= (\nabla_j A)(e_i, e_j) = 2(\nabla_j A^{0})(e_i, e_j) \label{eq:nabla H A A^0}.
\end{align}
The \emph{Laplace--Beltrami operator} on $(\Sigma, \add{g})$ is given by
\begin{align}
	\Delta_g \xi = g^{ij} \nabla_i \nabla_j \xi, \quad \text{ for }\xi \in C^{\infty}(\Sigma).
\end{align}

For a $2\choose 0$- tensor $T_{ij}$, its tensor norm is \add{$\abs{T}^2 \defeq g^{ij}g^{k\ell} T_{ik}T_{j\ell}$} and hence we get
\begin{align}\label{eq:AA0H}
	\abs{A}^2 = \abs{A^0}^2+\frac{1}{2}H^2.
\end{align}
Consequently, using \eqref{eq:WvstildeW}, we find
\begin{align}\label{eq:A^2GaussBonnet}
	\int_{\Sigma} \abs{A}^2\diff \mu  = \overline{\CalW}(f)+2\CalW(f)=4{\CalW}(f)-4\pi\chi(\Sigma).
\end{align}

\subsection{The PDE perspective}\label{subsec:PDE}
	Note that in the general context of \emph{constrained gradient flows} on Hilbert spaces, cf. \cite[Section 5]{Rupp}, the flow in a Hilbert space $H$ associated to the energy $\CalE =\overline{\CalW}$ with constraint $\mathcal{G}=-\CalV\equiv constant$ is formally given by 
	\begin{align}
		\left\lbrace\begin{array}{ll}
			\partial_t f &= -\nabla_H\overline{\CalW}(f) - \lambda(f)\nabla_H\CalV(f), \quad t>0 \\
			f(0)&=f_0,
		\end{array}\right.
	\end{align}
	where the Lagrange multiplier is defined by the formula
		\begin{align}
			\lambda(f) =- \frac{\langle \nabla\overline{\CalW}(f), \nabla\CalV(f)\rangle_H}{\norm{\nabla\CalV(f)}{H}^2}.
		\end{align}
	If we choose $H\defeq L^2(\diff \mu_f)$, by the explicit form of the $L^2$-gradients (see \Cref{lem:first Vari V and W} below), the divergence theorem and the fact that $\partial \Sigma=\emptyset$, this definition coincides with the flow in \eqref{eq:VpWF} with Lagrange multiplier $\lambda$ as in \eqref{eq:deflambda}. In particular, $\lambda$ does not contain any derivatives of the curvature and is therefore of lower order compared to the leading term $-\Delta H$ in \eqref{eq:VpWF}. {This will significantly simply the analysis of $\lambda$ later on.}
	
	Despite that, the flow equation \eqref{eq:VpWF} is still a \emph{quasilinear, degenerate parabolic} PDE of 4\textsuperscript{th} order which is \emph{nonlocal} due to the Lagrange multiplier. Hence, even short time existence and uniqueness is not \add{immediate}. However, as $\lambda$ is of lower order, for smooth initial data one can show the following local well-posedness \add{result by using an appropriate fixed-point argument in parabolic H\"older spaces, see for instance \cite[Section 7]{MR2164924} and \cite[Section 3.1]{KSLectureNotes}}.
	\begin{prop}\label{prop:STE}
		Let $f_0\colon\Sigma\to\R^3$ be a smooth immersion. Then there exist $T\in (0,\infty]$ and a unique, nonextendable smooth solution $f\colon[0,T)\times \Sigma\to\R^3$ of the volume-preserving Willmore flow with initial datum $f(0)=f_0$.
	\end{prop}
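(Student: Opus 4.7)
The plan is to reduce the problem to the short-time theory already developed for the (unconstrained) Willmore flow. The crucial observation is that, as remarked just before the statement, $\lambda=\lambda(f)$ depends only on $H$, $A^0$ and $\CalA(f)$, so it involves at most second derivatives of $f$; in particular, it is subordinate to the leading term $-\Delta H$ of order four in \eqref{eq:VpWF}.

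The first step is to fix the reparametrization gauge by writing the evolving immersion as a normal graph over $f_0$: set $f(t,\cdot)=f_0+\phi(t,\cdot)\nu_{f_0}$ for a scalar function $\phi$ with $\phi(0,\cdot)=0$. Since the right-hand side of \eqref{eq:VpWF} is purely normal, any smooth solution of the geometric flow is, on a short time interval and up to a family of diffeomorphisms of $\Sigma$, of this form; conversely, a normal graph solving the induced scalar equation gives rise to a solution of \eqref{eq:VpWF}. Expanding the operators in terms of $\phi$, one checks that the induced evolution equation for $\phi$ is a quasilinear fourth-order equation whose linearization at $\phi=0$ has principal symbol that of $-\Delta_{f_0}^2$, plus a nonlocal lower-order perturbation coming from $-\abs{A^0}^2 H+\lambda$.

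Given this structure, short-time existence follows from standard theory for quasilinear parabolic equations, exactly as in the classical treatment of the Willmore flow (cf.\ \cite{KSGF}) and the area-preserving variant in \cite{Jachan}. Concretely, one fixes a suitable parabolic Hölder or Sobolev--Slobodeckij space on $[0,T_0]\times\Sigma$, freezes the nonlocal coefficient by prescribing an auxiliary $\bar\phi$ in a small ball, solves the resulting linear fourth-order parabolic equation with initial value zero, and shows that the solution map is a contraction for $T_0$ sufficiently small. The contraction estimate for $\lambda$ is immediate because the integrands $\abs{A^0}^2 H$ and the area element $\diff\mu$ depend smoothly on $\phi$ up to two derivatives, so $\phi\mapsto\lambda(f_0+\phi\nu_{f_0})$ is locally Lipschitz from the chosen function space into $C^0([0,T_0])$. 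The Banach fixed-point theorem then yields a unique local solution $\phi$, and smoothness in $(t,x)$ follows by parabolic bootstrapping together with the smoothness of $f_0$.

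Uniqueness within the normal graph gauge is built into the fixed-point scheme; uniqueness of the geometric flow then follows because any two smooth solutions of \eqref{eq:VpWF} can, on a sufficiently short interval, be brought into normal graph form over their common initial datum. Finally, one defines $T\in(0,\infty]$ to be the supremum of all times up to which a smooth solution exists and glues the locally constructed solutions together to obtain a nonextendable smooth solution on $[0,T)$. The main technical point, rather than a genuine obstacle, is to verify the smooth dependence of $\lambda$ on $\phi$ in the chosen norm; once the normal graph parametrization and appropriate function space are set up, the argument is entirely parallel to the unconstrained case.
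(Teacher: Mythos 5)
Your proposal is correct and follows essentially the same route as the paper: the paper's proof is exactly the normal-graph reduction to a strictly parabolic quasilinear equation, a fixed-point argument in parabolic H\"older spaces, and the observation that $\lambda$ is of lower order (no curvature derivatives), so it only enters as a nonlocal but Lipschitz perturbation. The details you sketch (contraction via freezing the nonlocal coefficient, bootstrapping, gauge-based uniqueness, and gluing to a maximal interval) are precisely what the paper defers to \cite{KSLectureNotes} and to the proof of \Cref{lem:LojaAsymStabil}.
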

	\delete{
	\begin{proof}
		This can be obtained by mimicking the proof of the short time existence for the Willmore flow in \cite[Section 3.1]{KSLectureNotes}. The ansatz is to write the solution as a normal graph over the initial datum, which reduces the problem to a strictly parabolic equation, and then use an appropriate fixed-point argument in parabolic H\"older spaces, see also the proof of \Cref{lem:LojaAsymStabil} below.
	\end{proof}
	Note that if only Sobolev regularity of the initial datum is given, the involved contraction estimates can become significantly more complicated, see \cite{RuppSpener} for a related flow of curves.}
 
	An important property of the volume-preserving Willmore flow is the following \emph{parabolic scaling}, which directly follows from the scaling behavior of the geometric quantities.
	\add{
	\begin{remark}\label{rem:ParabolicScaling}
		If $f\colon [0,T)\times \Sigma\to\R^3$ is a volume-preserving Willmore flow, then for any $\rho>0$ the family of immersions $\tilde{f}(t,p) \defeq \rho^{-1} f(\rho^4t, p)$
	is also a volume-preserving Willmore flow on $[0, \tilde{T})\times \Sigma$ with $\tilde{T}=\rho^{-4}T$. Moreover, {by a direct computation we have}
	\begin{align}
		\int_0^T \abs{\lambda(t)}^{\frac{4}{3}}\diff t = \int_0^{\tilde{T}} \abs{\tilde{\lambda}(t)}^{\frac{4}{3}} \diff t, \label{eq:scale lambda^4/3}\quad 
		\int_0^T \abs{\lambda(t)}^2\CalA(f_t)\diff t = \int_0^{\tilde{T}} \abs{\tilde{\lambda}(t)}^2 \CalA(\tilde{f}_t)\diff t.\label{eq:scale lambda^2}
	\end{align}
	{Note that the power $p=\frac{4}{3}$ is the only exponent for which the $L^p$-norm of $\lambda$ behaves correctly with respect to the rescaling above and will naturally show up in \Cref{sec:loc int est} below. 
	In \Cref{sec:Int est Lambda}, we  show how to control both of these integrals.}
	\end{remark}}

	\delete{Instead of studying the $L^2$-gradient flow of the energy $\overline{\CalW}$, one could directly work with the (volume constrained) gradient flow of the Willmore energy \eqref{eq:defWillmore}. By \eqref{eq:WvstildeW}, this would yield the evolution equation
	\begin{align}\label{eq:VpWF2}
		\partial_t f = \frac{1}{2}\left(-\Delta H -\abs{A^0}^2H + \lambda\right),
	\end{align}
	with $\lambda$ as in \eqref{eq:deflambda}, see also \Cref{lem:first Vari V and W} below.
	Therefore, the velocities of the flows \eqref{eq:VpWF} and \eqref{eq:VpWF2} only differ by a factor. As a consequence $f\colon [0,T)\times \Sigma\to\R^3$ is a solution to \eqref{eq:VpWF} if and only if $h\colon [0,2T)\times \Sigma\to\R^3$, $h(t,p)\defeq f(\frac{t}{2},p)$ solves \eqref{eq:VpWF2}, so both flows only differ by a temporal rescaling. Following Kuwert and Schätzle, we work with \eqref{eq:VpWF} since its analytic form is more convenient. On the other hand, for geometric purposes, traditionally the Willmore energy \eqref{eq:defWillmore} is more common, which is why we state all energy bounds in terms of $\CalW$ and the Euler characteristic.}

\subsection{Evolution of the geometric quantities}\label{subsec:geom evol}
In this section, we \add{recall} the variation of the relevant geometric quantities and the (localized) evolution of the energy. 
\add{The proofs are standard and can be found in \cite{KSGF,KSSI}, for instance, or follow from direct computations.}
\begin{lem}\label{lem:geometricEvolutions}
	Let $f\colon [0,T)\times \Sigma \to \R^3$ be a smooth family of {immersions} with normal velocity $\partial_t f =\xi\nu$. For an  orthonormal basis $\{e_1, e_2\}$ of the tangent space, the geometric quantities induced by $f$ satisfy
	\begin{align}
		(\partial_tg)(e_i, e_j) &= -2 A_{ij}\xi,\label{eq:dtg}\\
		\partial_t (\diff \mu) &= -H\xi \diff \mu,\label{eq:dtdmu}\\
		\partial_t  H &= \Delta \xi+ |A^{0}|^2\xi + \frac{1}{2} H^2\xi,\label{eq:dtH}\\
		(\partial_t  A)(e_i,e_j) &= \nabla^2_{ij} \xi - A_{ik}A_{kj}\xi,\label{eq:dtA}\\
		(\partial_t  A^{0})(e_i,e_j) &= \left(\nabla^2_{ij}\xi\right)^0 - g_{ij}|A^{0}|^2 \xi,\label{eq:dtA^0} \\
		\partial_t \nu &= - \grad_{g} \xi \eqdef g^{ij}\partial_i \xi \partial_j f.\label{eq:dtnu}
	\end{align}
\end{lem}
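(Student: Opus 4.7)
The plan is to verify each identity directly from the definitions via local coordinate computations, imitating the standard derivations of Huisken and of Kuwert--Sch\"atzle. Since all claims are pointwise, I would fix $p\in\Sigma$ and work in coordinates chosen so that the frame $\{e_1, e_2\}$ is $g$-orthonormal at $p$ with vanishing Christoffel symbols at $p$; this reduces $\nabla^2_{ij}\xi$ to $\partial_i\partial_j\xi$ and lets covariant and coordinate derivatives be used interchangeably at $p$. Throughout, I would freely commute $\partial_t$ with spatial derivatives of $f$, since $f$ is smooth, and use the two basic relations obtained from differentiating $\langle\nu,\nu\rangle=1$ and $\langle\nu,\partial_j f\rangle=0$, namely $\langle\partial_i\nu,\nu\rangle=0$ and the Weingarten identity $\langle\partial_i\nu,\partial_j f\rangle = -A_{ij}$.

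To start, \eqref{eq:dtg} follows by differentiating $g_{ij}=\langle\partial_i f,\partial_j f\rangle$, substituting $\partial_t f=\xi\nu$ and applying Weingarten; from this, \eqref{eq:dtdmu} comes from Jacobi's formula $\partial_t\sqrt{\det g}=\tfrac{1}{2}\sqrt{\det g}\,g^{ij}\partial_t g_{ij}=-H\xi\sqrt{\det g}$. Next, for \eqref{eq:dtnu}, the relation $|\nu|^2=1$ forces $\partial_t\nu$ to be tangential, and differentiating $\langle\nu,\partial_i f\rangle=0$ in $t$ gives $\langle\partial_t\nu,\partial_i f\rangle=-\partial_i\xi$, which determines the tangential coefficients and yields the claimed formula.

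For \eqref{eq:dtA}, I differentiate $A_{ij}=\langle\partial_i\partial_j f,\nu\rangle$ in $t$, commute derivatives, and substitute $\partial_t f=\xi\nu$ together with the already-derived expression for $\partial_t\nu$. Expanding $\partial_i\partial_j(\xi\nu)$ produces a normal term $\partial_i\partial_j\xi\cdot\nu$, mixed terms such as $\partial_i\xi\,\partial_j\nu$ paired against $\nu$ (which vanish) and against tangent vectors (which cancel against the contribution of $\partial_t\nu$), and a pure curvature term that, via Weingarten, gives $-A_{ik}A_{k j}\xi$; at $p$ the Christoffel contributions drop out, producing \eqref{eq:dtA}. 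Then \eqref{eq:dtH} follows from $H=g^{ij}A_{ij}$ using $\partial_t g^{ij}=-g^{ik}g^{j\ell}\partial_t g_{k\ell}=2\xi A^{ij}$, combined with \eqref{eq:AA0H} to rewrite $|A|^2$ as $|A^0|^2+\tfrac{1}{2}H^2$.

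Finally, \eqref{eq:dtA^0} is obtained from $A^0_{ij}=A_{ij}-\tfrac{1}{2}Hg_{ij}$ by plugging in \eqref{eq:dtA}, \eqref{eq:dtH} and \eqref{eq:dtg}. The principal term organizes itself into $\partial_i\partial_j\xi-\tfrac{1}{2}\Delta\xi\,g_{ij}=(\nabla^2_{ij}\xi)^0$, and the curvature terms collapse to $-g_{ij}|A^0|^2\xi$ after applying the Cayley--Hamilton identity $A_{ik}A_{kj}=HA_{ij}-\det(A)\,g_{ij}$ (valid for symmetric $2\times 2$ tensors, i.e.\ in codimension one) together with the elementary identity $\det(A)=\tfrac{1}{4}H^2-\tfrac{1}{2}|A^0|^2$, which itself follows from $|A|^2=H^2-2\det(A)$ and \eqref{eq:AA0H}. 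None of this is deep; the main obstacle is purely bookkeeping, namely carefully separating tangential and normal contributions and exploiting the orthonormal frame at $p$ to prevent a proliferation of Christoffel symbols obscuring the algebra.
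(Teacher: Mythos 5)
Your proposal is correct, but it takes a more self-contained route than the paper: the paper's proof of this lemma simply cites \cite[Lemma 2.2]{KSGF} for \eqref{eq:dtg}--\eqref{eq:dtA} and \cite[(27)--(30)]{KSSI} for \eqref{eq:dtA^0}, and only computes \eqref{eq:dtnu} directly (by exactly the argument you give: $\partial_t\nu$ is tangential and $\langle\partial_t\nu,\partial_j f\rangle=-\langle\nu,\partial_j(\xi\nu)\rangle=-\partial_j\xi$). Your from-scratch derivations of the remaining identities check out: the computation of $\partial_t g_{ij}$ via Weingarten, Jacobi's formula for the area element, the Hessian-plus-$(-A_{ik}A_{kj}\xi)$ structure of $\partial_t A_{ij}$ (where in normal coordinates at $p$ the term $\langle\partial_i\partial_j f,\partial_t\nu\rangle$ simply vanishes because $\partial_i\partial_j f$ is purely normal there, rather than ``cancelling'' against anything), the trace computation for $\partial_t H$ using $\partial_t g^{ij}=2\xi A^{ij}$, and, for \eqref{eq:dtA^0}, the Cayley--Hamilton identity $A_{ik}A_{kj}=HA_{ij}-\det(A)g_{ij}$ together with $\det(A)=\tfrac14 H^2-\tfrac12\abs{A^0}^2$, which correctly collapses the curvature terms to $-g_{ij}\abs{A^0}^2\xi$. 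This last step genuinely uses codimension one, consistent with the paper's remark that the codimension-one setting simplifies the geometric objects. What your approach buys is independence from the cited references at the cost of the coordinate bookkeeping you describe; what the paper's approach buys is brevity. One presentational caveat: the lemma's identities are stated for the components in a frame (equivalently, the time derivative of the coordinate components with the coordinate frame held fixed), which is the convention your computation implicitly adopts and which matches \cite{KSGF}; it is worth being aware that the paper later (in the proof of \Cref{lem:EvolutionOfCurvatureIntegrals}) separately accounts for $\partial_t e_i$ when differentiating $\abs{A^0}^2$ along a time-dependent orthonormal frame, so the two conventions must not be conflated.
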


\delete{
\begin{proof}
	See \cite[Lemma 2.2]{KSGF} for \eqref{eq:dtg}-\eqref{eq:dtA} and \cite[(27)-(30)]{KSSI} for \eqref{eq:dtA^0}. For \eqref{eq:dtnu}, we observe that  $\nu$ is smooth in time {by \eqref{eq:def normal}} with $\langle\partial_t \nu, \nu\rangle=0$ and hence
	\begin{align}
		\partial_t \nu &= g^{ij} \langle\partial_t \nu, \partial_j f\rangle\partial_i f = -g^{ij} \langle\nu, \partial_j (\xi\nu)\rangle\partial_i f = -g^{ij}\partial_j \xi \partial_i f = -\grad_{g} \xi.\qedhere
	\end{align}
\end{proof}
}

As a consequence, we find the first variation of the energy and the volume.
\begin{lem}\label{lem:first Vari V and W}
	Let $f\colon\Sigma\to\R^3$ be an immersion. Then, the first variations of $\overline{\CalW}$ and $\CalV$ are given by
	\begin{align}\label{eq:1 Vari Vol}
	\CalV^{\prime}(f)\varphi &= -\int\langle\nu, \varphi\rangle\diff \mu,\\
	 \label{eq:1 Vari Willmore}
	 \overline{\CalW}^{\prime}(f)\varphi &= \int \langle (\Delta H + \abs{A^0}^2H)\nu, \varphi\rangle \diff \mu,
	\end{align}
{for all $\varphi\in C^{\infty}(\Sigma;\R^3)$ normal along $f$. Here and in the following, we always integrate over the whole surface $\Sigma$ if the domain of integration is not specified.}
\end{lem}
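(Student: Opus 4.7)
Since $\varphi$ is normal along $f$, I would write $\varphi=\xi\nu$ with $\xi\in C^{\infty}(\Sigma)$ and realize $\varphi$ as the initial velocity of a smooth variation $f\colon(-\delta,\delta)\times\Sigma\to\R^3$ with $\partial_tf=\xi\nu$ at $t=0$. Then both first variations can be computed by differentiating under the integral sign and applying the evolution formulas from \Cref{lem:geometricEvolutions}.

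For \eqref{eq:1 Vari Willmore}, I would exploit \eqref{eq:WvstildeW}: since $\chi(\Sigma)$ is a topological invariant, it suffices to compute $\tfrac14\tfrac{d}{dt}\int H^2\,\diff\mu$ and multiply by $2$. Inserting \eqref{eq:dtH} and \eqref{eq:dtdmu},
\begin{align}
\partial_t(H^2\diff \mu) = 2H\bigl(\Delta \xi+\abs{A^0}^2\xi+\tfrac12 H^2\xi\bigr)\diff\mu - H^3\xi\diff\mu = \bigl(2H\Delta\xi + 2\abs{A^0}^2H\xi\bigr)\diff\mu,
\end{align}
so the $H^3\xi$ terms cancel. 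Integrating by parts (no boundary) yields $\int H\Delta \xi\,\diff\mu=\int \xi\Delta H\,\diff\mu$, giving $\overline{\CalW}'(f)\varphi=\int \xi(\Delta H+\abs{A^0}^2H)\diff\mu$, which is \eqref{eq:1 Vari Willmore} after rewriting $\xi=\langle\nu,\varphi\rangle$. This part is routine.

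The slightly more involved part is \eqref{eq:1 Vari Vol}, because the integrand $\langle f,\nu\rangle$ contains both $f$ and $\nu$, which evolve in different ways. Using \eqref{eq:dtdmu} and \eqref{eq:dtnu},
\begin{align}
\tfrac{d}{dt}\CalV(f_t)=-\tfrac13\int\bigl(\xi-\langle f,\grad_g\xi\rangle-H\xi\langle f,\nu\rangle\bigr)\diff\mu.
\end{align}
The key identity I need is the tangential divergence formula for the position vector: decomposing $f=f^\top+\langle f,\nu\rangle\nu$, one readily checks that $\divergence_{\Sigma} f=2$ and $\divergence_{\Sigma}(\langle f,\nu\rangle\nu)=-H\langle f,\nu\rangle$, hence $\divergence_{\Sigma}(f^\top)=2+H\langle f,\nu\rangle$. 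Integration by parts then converts $\int\langle f,\grad_g\xi\rangle\diff\mu=\int g(f^\top,\grad_g\xi)\diff\mu$ into $-\int\xi(2+H\langle f,\nu\rangle)\diff\mu$; substituting this back produces cancellations that collapse the right-hand side to $-\int\xi\,\diff\mu=-\int\langle\nu,\varphi\rangle\diff\mu$.

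The only real obstacle is keeping the sign and divergence conventions straight in the volume computation; the rest reduces to bookkeeping with \Cref{lem:geometricEvolutions} and integration by parts.
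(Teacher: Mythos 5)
Your proposal is correct and follows essentially the same route as the paper: both variations are computed from the evolution formulas of \Cref{lem:geometricEvolutions} followed by a single integration by parts, and your Willmore computation is identical to the paper's. For the volume term the paper integrates by parts on $-\int\langle f,\nu\rangle H\xi\diff\mu$ using $\Delta f=H\nu$ and $\abs{\nabla f}^2=2$, whereas you integrate by parts on $\int\langle f,\grad_g\xi\rangle\diff\mu$ using $\divergence_\Sigma(f^\top)=2+H\langle f,\nu\rangle$; these are the same integration by parts read in opposite directions, and your signs and the resulting cancellation check out.
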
 

\delete{
\begin{proof}
	To prove \eqref{eq:1 Vari Vol}, we use \eqref{eq:defVol}, \eqref{eq:dtnu} and \eqref{eq:dtdmu} to compute
	\begin{align}\label{eq:1 Vari Vol 1}
		\dtzero \CalV(f+t\xi\nu) &= -\frac{1}{3} \left( \int \langle\xi\nu, \nu\rangle\diff \mu - \int \langle f, \grad_g \xi\rangle\diff \mu - \int \langle f, \nu\rangle H\xi\diff \mu \right).
	\end{align} 
	A short computation yields $\Delta f = H\nu$, where the Laplace--Beltrami operator is applied to each component of $f$. Hence, for the last term we integrate by parts to find
	\begin{align}
		-\int \langle f, \nu\rangle H\xi\diff \mu
		= \int \langle \nabla(f\xi), \nabla f\rangle \diff \mu = \int \abs{\nabla f}^2\xi \diff \mu + \int \langle f,\grad_g \xi\rangle\diff \mu.
	\end{align}
	Inserting this into \eqref{eq:1 Vari Vol 1} and using $\abs{\nabla f}^2 = g^{ij}\langle \partial_i f, \partial_j f\rangle =2$, the claim follows.
	Formula \eqref{eq:1 Vari Willmore} can be obtained similarly using \eqref{eq:WvstildeW}, \eqref{eq:dtH} and \eqref{eq:dtdmu} to obtain
	\begin{align}
		\overline{\CalW}^{\prime}(f)\varphi = 2\CalW^{\prime}(f)\varphi = \int H\partial_t H + \frac{1}{2}\int H^2\partial_t(\diff \mu) = \int (H\Delta\xi + \abs{A^0}^2H\xi)\diff \mu. & \qedhere
	\end{align}
\end{proof}}

This means that the \emph{$L^2(\diff \mu_f)$-gradients} of $\overline{\CalW}$ and $\CalV$ are given by the identities
\begin{align}
	\nabla \overline{\CalW}(f) &= (\Delta H +\abs{A^0}^2H)\nu, \\
	\nabla \CalV(f) &= -\nu.
\end{align}
These gradients are purely normal, so we will often work with the \emph{scalar $L^2(\diff\mu_f)$-gradient} 
\begin{align}\label{eq:nabla W sc}
	\nabla_{sc}\overline{\CalW}(f) \defeq \Delta H +\abs{A^0}^2H.
\end{align}
\add{By direct computation, along a solution} of \eqref{eq:VpWF} the volume is indeed preserved since
\begin{align}\label{eq:dtV=0}
\frac{\diff}{\diff t} \CalV(f) &= 0,
\end{align}
\delete{by \eqref{eq:1 Vari Vol}, \eqref{eq:deflambda} and the divergence theorem,} whereas by \eqref{eq:1 Vari Willmore} and \eqref{eq:dtV=0} the energy decreases by
\begin{align}
\frac{\diff}{\diff t}\overline{\CalW}(f) 
&= 
 -\int \abs{\partial_t f}^2\diff \mu \leq 0.\label{eq:dtW<=0}
\end{align}
\begin{remark}\label{rem:W strict Lyapunov}
	The computation in \eqref{eq:dtW<=0} implies that $\overline{\CalW}$ is a \emph{strict Lyapunov function}, i.e.\ $\overline{\CalW}$ is strictly decreasing unless $f$ is constant. {By \eqref{eq:WvstildeW} this also holds for $\CalW$.}
\end{remark}

{
	It is now easy to prove the following rigidity result for constrained Willmore immersions.
	\begin{lem}\label{lem:constr Willmore}
		Let $\Sigma$ be a compact, oriented surface without boundary and let $f\colon\Sigma\to\R^3$ be a solution to \eqref{eq:stationary} with $\CalV(f)\neq 0$. Then $f$ is a Willmore immersion, i.e.\ a solution to \eqref{eq:stationary} with $\lambda=0$.
	\end{lem}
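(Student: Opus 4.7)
The plan is to exploit the fact that the Willmore energy $\overline{\CalW}$ is scale invariant, while the volume $\CalV$ is not. This will produce a Pohozaev-type identity that forces $\lambda=0$.

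More concretely, I would start from the scale invariance $\overline{\CalW}(rf)=\overline{\CalW}(f)$ for all $r>0$ (which is immediate since curvature scales like $r^{-1}$, area like $r^2$, and $\overline{\CalW}$ is a product of $|A^0|^2$ and $d\mu$). Differentiating at $r=1$ gives $\overline{\CalW}'(f)f=0$. Since tangential variations do not change $\overline{\CalW}$ (by geometric invariance under diffeomorphisms of $\Sigma$), only the normal component $\langle f,\nu\rangle\nu$ of the variation $f$ contributes, so the first variation formula \eqref{eq:1 Vari Willmore} yields
\begin{equation*}
0=\overline{\CalW}'(f)f=\int\bigl(\Delta H+|A^0|^2H\bigr)\langle f,\nu\rangle\,\diff\mu.
\end{equation*}

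Now I would plug in the stationary equation $\Delta H+|A^0|^2H=\lambda$ from \eqref{eq:stationary} and recognize the remaining integral as $-3\CalV(f)$ by the very definition \eqref{eq:defVol}. This gives
\begin{equation*}
0=\lambda\int\langle f,\nu\rangle\,\diff\mu=-3\lambda\,\CalV(f).
\end{equation*}
Since $\CalV(f)\neq 0$ by hypothesis, we conclude $\lambda=0$, so $f$ is an (unconstrained) Willmore immersion.

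I do not expect a real obstacle here; the only minor point requiring care is justifying that $\overline{\CalW}'(f)$ evaluated on the non-normal variation $\varphi=f$ coincides with the right-hand side of \eqref{eq:1 Vari Willmore} applied to the normal part $\langle f,\nu\rangle\nu$. This follows from the invariance of $\overline{\CalW}$ under reparametrizations of $\Sigma$, which makes tangential variations null directions of $\overline{\CalW}'(f)$. Alternatively, one can avoid this remark entirely by directly computing $\frac{d}{dr}\big|_{r=1}\overline{\CalW}(rf)$ using \eqref{eq:WvstildeW} and the scaling behavior of $H$, $|A^0|^2$ and $d\mu$ under the homothety, obtaining the same identity without invoking the variational formula on non-normal fields.
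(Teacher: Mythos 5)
Your proposal is correct and is essentially the paper's own argument: both differentiate the scale invariance $\overline{\CalW}((1+t)f)=\overline{\CalW}(f)$ at $t=0$, apply the first variation formula to the (non-normal) variation $\varphi=f$, insert \eqref{eq:stationary}, and identify the resulting integral with $-3\lambda\,\CalV(f)$ via \eqref{eq:defVol}. The paper handles your ``only minor point'' exactly as you suggest, by noting that \eqref{eq:1 Vari Willmore} extends to non-normal variations.
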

	We note that the assumptions in \Cref{lem:constr Willmore} are automatically satisfied if $f$ is an embedding of a compact surface, since then $\CalV$ is exactly the volume of the domain enclosed by $f(\Sigma)$ \add{by the divergence theorem}\delete{, cf. \Cref{sec:volume}}. {On the other hand, the example of an infinitely long cylinder shows that the statement of the lemma is no longer true without the compactness assumption.}
	\begin{proof}[{Proof of \Cref{lem:constr Willmore}}]
		We observe that by the scaling invariance of the Willmore energy, we have $\overline{\CalW}(f+tf)= \overline{\CalW}(f)$ for all \add{$\abs{t}<1$}. Hence, by \eqref{eq:1 Vari Willmore} (which also holds for variations which are not necessarily normal, see for instance \cite[p. 11]{KSLectureNotes}), we find
		\begin{align}
			0=\dtzero \overline{\CalW}(f+tf) &= \int\langle \nabla\overline{\CalW}(f), f\rangle \diff \mu = \int \langle (\Delta H + \abs{A^0}^2H)\langle \nu, f\rangle \diff \mu = 3\lambda \CalV(f),
		\end{align}
	using \eqref{eq:stationary} and then \eqref{eq:defVol} in the last step. As $\CalV(f)\neq 0$, this yields the claim.
	\end{proof}
}

%

\delete{
Following \cite{KSGF,KSSI}, for tensors $\phi, \psi$ on $\Sigma$, we denote by $\phi*\psi$ any multilinear form, depending on $\phi$ and $\psi$ in a universal bilinear way. In particular,
we have $\abs{\phi*\psi}\leq c\abs{\phi}\abs{\psi}$ and $\nabla(\phi*\psi)=\nabla\phi*\psi+\phi*\nabla\psi$. Note that since we are in codimension one, we can work with tensors with scalar {values} and not with normal values.

Moreover, for $m\in \N_0$ and $r\in \N, r\geq 2$ we denote by $P^m_r(A)$ any term of the type
\begin{align}
P^m_r(A) = \sum_{i_1+\dots+i_r=m} \nabla^{i_1}A*\dots*\nabla^{i_r}A.
\end{align}
{In addition, for $r=1$ we extend this definition by denoting by $P^m_1(A)$ any contraction of $\nabla^mA$ with respect to the metric $g$.}  
Given an $\ell\choose 0$-tensor $\psi$ we denote by $\nabla^{\ast}\psi$ the formal adjoint of $\nabla$ given by $\nabla^{\ast} \psi = -(\nabla_{e_i} \psi)(e_i, \dots)$. In this notation the Laplacian on tensors is given by $\Delta = -\nabla^{\ast}\nabla$. An important commutator relation is the identity
\begin{align}\label{eq:Nabla*NablaCommutator}
(\nabla \nabla^{\ast} - \nabla^{\ast}\nabla)\nabla \psi= A*A*\nabla\psi + A*\nabla A*\psi
\end{align}
for any $\ell\choose 0$-tensor $\psi$, cf. \cite[(2.11)]{KSGF}. 
Moreover, we recall Simons' identity (cf. \cite{Simons})
\begin{align}\label{eq:Simons'Identity}
\Delta A = \nabla^2 H + 2 K A^0 = \nabla^2 H + A*A*A.
\end{align}
We can now compute the evolution of higher order derivatives of the second fundamental form.
\begin{lem}\label{lem:localEvolNabla^mA}
	Let $f\colon [0,T)\times\Sigma\to\R^3$ be a volume-preserving Willmore flow. Then for all $m\in \N_0$ we have
	\begin{align}
		\partial_t (\nabla^m A)+ \Delta^2(\nabla^m A) = P^{m+2}_3(A) + P^m_5(A) + \lambda P^{m}_2(A).
	\end{align}
\end{lem}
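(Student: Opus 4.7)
I would prove this by induction on $m$, starting from the base case $m=0$ and then commuting derivatives.

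\textbf{Base case $m=0$.} Plug $\xi=-\Delta H-\abs{A^0}^2H+\lambda$ into \eqref{eq:dtA} to obtain
\begin{align}
\partial_t A(e_i,e_j) = -\nabla^2_{ij}(\Delta H)-\nabla^2_{ij}(\abs{A^0}^2H) + A_{ik}A_{kj}\bigl(\Delta H+\abs{A^0}^2H-\lambda\bigr),
\end{align}
using that $\nabla^2\lambda=0$ since $\lambda=\lambda(t)$ is spatially constant. The term $\nabla^2_{ij}(\abs{A^0}^2H)$ expands (via $\abs{A^0}^2=\abs{A}^2-\tfrac12 H^2$ and the Leibniz rule) into $P^2_3(A)$. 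Likewise $A*A*\Delta H\in P^2_3(A)$ and $A*A*\abs{A^0}^2H\in P^0_5(A)$, while $\lambda A_{ik}A_{kj}\in \lambda P^0_2(A)$. For the principal term $\nabla^2(\Delta H)$, I would commute derivatives using \eqref{eq:Nabla*NablaCommutator} and then invoke Simons' identity \eqref{eq:Simons'Identity} in the form $\nabla^2 H=\Delta A - A*A*A$, yielding
\begin{align}
\nabla^2 \Delta H = \Delta \nabla^2 H + P^2_3(A) = \Delta^2 A + P^2_3(A).
\end{align}
Collecting everything gives $\partial_t A+\Delta^2 A=P^2_3(A)+P^0_5(A)+\lambda P^0_2(A)$.

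\textbf{Inductive step.} Assume the claim for $m$ and apply $\nabla$. The key commutators I would need are $[\partial_t,\nabla]$, which via \eqref{eq:dtg} involves $\partial_t \Gamma^k_{ij}=P^0_1(A)*\nabla\xi+P^1_2(A)*\xi$, and $[\nabla,\Delta^2]$, which by iterated use of \eqref{eq:Nabla*NablaCommutator} produces contractions of $A*A*\nabla^{m+1}A$ and $A*\nabla A*\nabla^m A$. Inserting $\xi=-\Delta H-\abs{A^0}^2H+\lambda$ into the Christoffel commutator, the $-\Delta H-\abs{A^0}^2H$ part contributes $P^{m+2}_3(A)+P^m_5(A)$, while the $\lambda$ part contributes exactly $\lambda P^m_2(A)$ (noting $\nabla\lambda=0$, so differentiating $\lambda P^m_2(A)$ in space preserves its form, producing $\lambda P^{m+1}_2(A)$ when raising $m$ by one). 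Combining these with the inductive hypothesis for $\nabla\partial_t(\nabla^m A)$ and $\nabla\Delta^2(\nabla^m A)$ closes the induction.

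\textbf{Main obstacle.} The calculation itself is largely routine bookkeeping in the Kuwert--Schätzle formalism; the only genuine point requiring care is tracking the Lagrange multiplier through the commutators. Since $\lambda$ enters $\xi$ additively and depends only on $t$, all of its contributions come from (i) the algebraic term $-\xi A_{ik}A_{kj}$ in \eqref{eq:dtA}, which directly produces $\lambda P^0_2(A)$, and (ii) the time derivative of the Christoffel symbols $\partial_t\Gamma$, which couples $\lambda$ with $A$ via $P^{\ast}_\ast(A)*\xi$-type terms. In both cases $\lambda$ factors out cleanly as a coefficient, so the induction preserves the schematic form $\lambda P^m_2(A)$ and no new $\lambda$-containing structure arises.
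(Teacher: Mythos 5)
Your proposal is correct and follows essentially the same route as the paper: the base case substitutes $\xi=-\Delta H-\abs{A^0}^2H+\lambda$ into \eqref{eq:dtA} and uses the commutator identity \eqref{eq:Nabla*NablaCommutator} together with Simons' identity \eqref{eq:Simons'Identity}, and the inductive step commutes $\nabla$ with $\partial_t$ and $\Delta^2$. The only cosmetic difference is that the paper invokes \cite[Lemma 2.3]{KSGF} to package exactly the commutator terms $\sum_{i+j+k=3}\nabla^iA*\nabla^jA*\nabla^{k+m}A + A*\nabla\xi*\nabla^mA+\nabla A*\xi*\nabla^mA$ that you propose to compute by hand, and your tracking of the $\lambda$-contributions (only through the algebraic term in \eqref{eq:dtA} and through $\nabla A*\xi*\nabla^m A$, since $\nabla\lambda=0$) matches the paper's bookkeeping.
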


\begin{proof}
	{First, we note that $H$ is a contraction of $A$ and hence $H=P^{0}_1(A)$, and consequently also $A^0=P^0_1(A)$. Thus, by \eqref{eq:VpWF}, we have 
	\begin{align}\label{eq:dt f in P notation}
		\xi =-\Delta H + P_3^0(A)+\lambda.
	\end{align}
	}
	For $m=0$ we insert this into \eqref{eq:dtA} to obtain
	\begin{align}
		\partial_t A &= \nabla^2 \xi + A*A*\xi = -\nabla^2(\Delta H) + P^2_3(A) + P^0_5(A) + \lambda P_2^0(A),
	\end{align}
	Using \eqref{eq:Nabla*NablaCommutator} twice, we find $	\nabla^2\Delta H = \Delta\nabla^2 H +P^2_3(A)$, hence by \eqref{eq:Simons'Identity} we have
	\begin{align}
		\partial_t A = - \Delta^2 A + P^2_3(A)+P^0_5(A)+\lambda P^0_2(A).
	\end{align}
	Assume the statement is true for $m\geq 1$. Using \cite[Lemma 2.3]{KSGF} with $\phi = \nabla^m A$ and the fact that we are in codimension one yields
	\begin{align}
		\partial_t \nabla^{m+1} A + \Delta^2 \nabla^{m+1}A &= \nabla\left(P^{m+2}_3(A)+P^{m}_5(A)+\lambda P^{m}_2(A)\right)\\
		&\quad + \sum_{i+j+k=3} \nabla^{i}A*\nabla^j A*\nabla^{k+m}A \\
		&\quad + A*\nabla\xi*\nabla^m A + \nabla A*\xi*\nabla^m A \\
		& = P^{m+3}_3(A) + P^{m+1}_5(A)+\lambda P^{m+1}_2(A),
	\end{align}
where we used \eqref{eq:dt f in P notation} in the last step.
\end{proof}
}
\delete{
As in \cite[Section 3]{KSSI}, we compute a localized version of \eqref{eq:dtW<=0}. {We give the full details here to highlight how the dependence on $\lambda$ comes into play.}

\begin{lem}\label{lem:EvolutionOfCurvatureIntegrals}
	Let $f\colon[0,T)\times \Sigma\to\R^3$ be a smooth volume-preserving Willmore flow, $\tilde{\eta} \in C_c^{\infty}(\R^3)$ and $\eta\defeq  \tilde{\eta}\circ f$. Then, we have
	\begin{align}
		\partial_t \int\frac{1}{2}H^2\eta\diff \mu + \int \abs{\nabla\overline{\CalW}(f)}^2\eta\diff\mu &= \lambda \int |A^0|^2H\eta  \diff \mu -2 \int \nabla_{sc}\overline{\CalW}(f) \langle\nabla H, \nabla \eta\rangle \diff \mu  \\
		&\quad - \int \nabla_{sc}\overline{\CalW}(f) H \Delta\eta \diff \mu + \int\frac{1}{2}H^2\partial_t \eta \diff\mu \label{eq:dtIntH^2Equation}
	\end{align}
	and
	\begin{align}
		\partial_t \int |A^{0}|^2\eta\diff \mu + \int \abs{\nabla\overline{\CalW}(f)}^2\eta \diff \mu &= \lambda\int |A^0|^2 H \eta\diff \mu -2\int  \nabla_{sc}\overline{\CalW}(f) \langle \nabla H,\nabla \eta\rangle \diff \mu \\
		&\quad - 2 \int  \nabla_{sc}\overline{\CalW}(f) \langle A^{0},\nabla^2\eta\rangle \diff \mu  +  \int|A^{0}|^2\partial_t \eta\diff\mu.\label{eq:dtIntA^0^2Eq}
	\end{align}
\end{lem}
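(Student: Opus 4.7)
The strategy for both identities is identical: apply the Leibniz rule under the integral sign, expand $\partial_t\diff\mu$ and $\partial_t H$ (respectively $\partial_t A^0$ and $\partial_t g^{ij}$) using \Cref{lem:geometricEvolutions}, substitute the flow equation in the compact form $\xi=-\nabla_{sc}\overline{\CalW}(f)+\lambda$, and integrate by parts (twice) to move all derivatives off $\xi$ onto $H\eta$ or $A^0\eta$. This is clean because $\partial\Sigma=\emptyset$, so no boundary terms appear. The principal term $\int\abs{\nabla\overline{\CalW}(f)}^2\eta\,\diff\mu$ arises by combining the $\nabla_{sc}\overline{\CalW}(f)$ contributions from the $\Delta H$ and $\abs{A^0}^2H$ pieces inside $\nabla_{sc}\overline{\CalW}(f)=\Delta H+\abs{A^0}^2H$.

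\textbf{Scalar identity \eqref{eq:dtIntH^2Equation}.} Substituting \eqref{eq:dtH} and \eqref{eq:dtdmu} into $\partial_t(\tfrac12 H^2\diff\mu)$, the cubic $\tfrac12 H^3\xi$ contributions cancel exactly, leaving
\[
	\partial_t\int\tfrac12 H^2\eta\,\diff\mu - \int\tfrac12 H^2\partial_t\eta\,\diff\mu = \int\bigl(H\Delta\xi+\abs{A^0}^2H\xi\bigr)\eta\,\diff\mu.
\]
Integration by parts yields $\int H\Delta\xi\,\eta\,\diff\mu=\int\xi\bigl[\eta\Delta H+H\Delta\eta+2\langle\nabla H,\nabla\eta\rangle\bigr]\,\diff\mu$. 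Inserting $\xi=-\nabla_{sc}\overline{\CalW}(f)+\lambda$, the $\lambda$-contribution from this term equals $\lambda\int\Delta(H\eta)\,\diff\mu=0$ by the divergence theorem, while the $\lambda$-piece coming from $\abs{A^0}^2H\xi$ produces exactly $\lambda\int\abs{A^0}^2H\eta\,\diff\mu$. The remaining $-\nabla_{sc}\overline{\CalW}(f)$ contributions regroup as $-\int\eta\,\nabla_{sc}\overline{\CalW}(f)\bigl(\Delta H+\abs{A^0}^2H\bigr)\,\diff\mu=-\int\abs{\nabla\overline{\CalW}(f)}^2\eta\,\diff\mu$, producing \eqref{eq:dtIntH^2Equation} after rearrangement.

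\textbf{Tensor identity \eqref{eq:dtIntA^0^2Eq}.} The same scheme is run with $u=\abs{A^0}^2$: expanding $\partial_t\abs{A^0}^2$ requires both \eqref{eq:dtA^0} and the metric correction $\partial_t g^{ij}=2A^{ij}\xi$ from \eqref{eq:dtg}, and the lower-order terms in $A$ generated by each must be tracked carefully. The essential new structural input is the tensor integration by parts
\[
	\int\langle A^0,\nabla^2\xi\rangle\eta\,\diff\mu = \int\xi\,\divergence\divergence(\eta A^0)\,\diff\mu,
\]
where $\divergence\divergence(\eta A^0)$ is unfolded by invoking the Codazzi consequence $\divergence A^0=\tfrac12\nabla H$ from \eqref{eq:nabla H A A^0}. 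This gives $\tfrac12\eta\Delta H+\langle\nabla\eta,\nabla H\rangle+\langle A^0,\nabla^2\eta\rangle$, which is precisely the combination needed to supply the $\langle A^0,\nabla^2\eta\rangle$ correction in \eqref{eq:dtIntA^0^2Eq} in place of the $H\Delta\eta$ correction in \eqref{eq:dtIntH^2Equation}. The $\lambda$-pieces again collapse, now using the Codazzi-type identity $2\int\langle A^0,\nabla^2\eta\rangle\,\diff\mu=-\int\langle\nabla H,\nabla\eta\rangle\,\diff\mu$, and the rest combines to $-\int\abs{\nabla\overline{\CalW}(f)}^2\eta\,\diff\mu$ and $\lambda\int\abs{A^0}^2H\eta\,\diff\mu$ exactly as before.

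\textbf{Main obstacle.} The main bookkeeping challenge lies in \eqref{eq:dtIntA^0^2Eq}: one must assemble $\partial_t\abs{A^0}^2$ consistently from the evolution of $A^0$ and the metric-variation contributions, verify that the cubic-in-$A$ terms align, and carry out the tensor integration by parts using Codazzi in the right form. Identity \eqref{eq:dtIntH^2Equation} is comparatively straightforward as it only involves the scalar Laplacian acting on $H\eta$.
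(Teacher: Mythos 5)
Your proposal is correct and follows essentially the same route as the paper: expand $\partial_t(\tfrac12 H^2\diff\mu)$ and $\partial_t(|A^0|^2\diff\mu)$ via \Cref{lem:geometricEvolutions}, substitute $\xi=-\nabla_{sc}\overline{\CalW}(f)+\lambda$, and integrate by parts using the Codazzi identity \eqref{eq:nabla H A A^0} so that the $\lambda$-terms collapse to $\lambda\int|A^0|^2H\eta\,\diff\mu$. The only (cosmetic) difference is that you package the tensor integration by parts as $\int\langle A^0,\nabla^2\xi\rangle\eta\,\diff\mu=\int\xi\,\divergence\divergence(\eta A^0)\,\diff\mu$, while the paper writes out the corresponding divergence terms explicitly and integrates by parts once against $\eta$; both computations are identical in substance, including the use of $A^0_{ik}A^0_{kj}A^0_{ij}=0$ and tracelessness of $A^0$ in assembling $\partial_t|A^0|^2$.
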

\begin{proof}
	We use a (local) orthonormal basis $\{e_i\}_{i=1,2}$. By \eqref{eq:dtdmu} and \eqref{eq:dtH} we find
	\begin{align}
	 	\partial_t \left(\frac{1}{2}H^2\diff  \mu\right) &= H\partial_t H \diff \mu - \frac{1}{2}H^3\xi\diff \mu = \left(H\Delta \xi+ H|A^{0}|^2\xi\right)\diff \mu \\
	 	&= (\Delta H +|A^{0}|^2H) \xi \diff \mu + (H\Delta\xi - \xi \Delta H)\diff \mu \\
	 	&= - \abs{\nabla\overline{\CalW}(f)}^2\diff \mu + \lambda \Delta H \diff \mu + \lambda |A^{0}|^2 H\diff \mu + \nabla_i\left(H\nabla_i \xi - \xi\nabla_i H\right)\diff \mu
	\end{align}
	Consequently, we compute using integration by parts
	\begin{align}
		&\partial_t \int\frac{1}{2}H^2\eta\diff \mu + \int \abs{\nabla\overline{\CalW}(f)}^2\eta\diff\mu \\
		&\qquad= \lambda \int (\Delta H + |A^0|^2H)\eta  \diff \mu +\int \left(2\xi\nabla_i H\nabla_i \eta + H\xi\Delta\eta\right)\diff \mu + \int\frac{1}{2}H^2\partial_t \eta \diff\mu.
	\end{align}
	Now, using \eqref{eq:nabla W sc} we observe that 
	\begin{align}
		\int \left(2\xi\nabla_i H\nabla_i \eta + H\xi\Delta\eta\right)\diff \mu &= -2 \int  \nabla_{sc}\overline{\CalW}(f) \nabla_i H \nabla_i \eta \diff \mu + 2\lambda \int \nabla_i H\nabla_i \eta\diff \mu \\
		&\quad - \int \nabla_{sc}\overline{\CalW}(f) H \Delta\eta \diff \mu + \lambda \int H\Delta\eta\diff \mu.
	\end{align}
	{Recalling that $\Delta(H\eta) = \Delta H \eta + 2 \nabla_i H \nabla_i \eta + H\Delta\eta$, the identity \eqref{eq:dtIntH^2Equation} follows.}

	For the second identity, we first observe that by \eqref{eq:dtg} we have 
	\begin{align}
		g(\partial_t e_i, e_j)+g(e_i, \partial_t e_j) = -(\partial_t g)(e_i, e_j) = 2 A_{ij}^0\xi + \delta_{ij} H\xi.
	\end{align}
	Moreover, writing $\partial_t e_i = g(\partial_t e_i, e_k) e_k$ and using the symmetry of $A^0$ we find
	\begin{align}
		A^0(\partial_t e_i, e_j) A^0(e_i, e_j) &= g(\partial_t e_i, e_k) A^0(e_k, e_j)A^0(e_i, e_j)\\
		&= \frac{1}{2}\big(g(\partial_t e_i, e_k) + g(e_i, \partial_t e_k)\big) A^{0}(e_k, e_j)A^0(e_i, e_j) \\
		&= \big(A^{0}(e_i, e_k) \xi + \frac{1}{2}\delta_{ik}H\xi\big)A^{0}(e_k,e_j)A^{0}(e_i,e_j) \\
		&= \frac{1}{2} |A^0|^2 H\xi, \label{eq:A^0dtei}
	\end{align}
	since $A^{0}_{ik}A^{0}_{kj}A^{0}_{ij}=0$ (using \cite[(2.5)]{KSGF}). Now, we apply \eqref{eq:dtdmu}, \eqref{eq:dtA^0} and \eqref{eq:A^0dtei} to find
	\begin{align}
		&\partial_t\left(|A^{0}|^2\diff \mu\right) =2 A^{0}(e_i, e_j)^{0} \partial_t\left(A^{0}(e_i, e_j)\right) \diff \mu - |A^{0}|^2 H\xi\diff \mu \\
		&\qquad = 2 A^{0}(e_i, e_j) \left[(\partial_t A^{0})(e_i, e_j) +A^{0}(\partial_t e_i, e_j) +  A^{0}(e_i, \partial_t e_j)\right]\diff\mu - |A^{0}|^2H\xi\diff\mu \\
			&\qquad = 2 A^{0}(e_i, e_j) \left[\nabla^2_{ij}\xi - g_{ij}|A^{0}|^2 \xi\right]\diff \mu + 2 |A^{0}|^2H\xi\diff \mu - |A^{0}|^2H\xi\diff\mu \\
			&\qquad = 2 \nabla_i(\nabla_j \xi A^{0}(e_i, e_j)) \diff \mu - \nabla_j\xi \nabla_j H\diff\mu + |A^{0}|^2H\xi\diff \mu,
	\end{align}
	where we used \eqref{eq:nabla H A A^0} and the fact that $A^0_{ij}(\nabla^2_{ij}\xi)^{0}= A^0_{ij}\nabla^2_{ij}\xi$ as $A^0$ is trace-free. Consequently we find
	\begin{align}
		\partial_t\left(|A^{0}|^2\diff \mu\right) &= 2\nabla_i(\nabla_j \xi A^{0}(e_i, e_j)) \diff \mu- \nabla_j(\xi \nabla_j H) \diff \mu + \xi \Delta H \diff \mu + |A^{0}|^2 H\xi\diff \mu  \\
		&= 2\nabla_i(\nabla_j \xi A^{0}(e_i, e_j)) \diff \mu- \nabla_j(\xi \nabla_j H) \diff \mu +  \nabla_{sc}\overline{\CalW}(f)\xi \diff \mu \\	
		&= 2\nabla_i(\nabla_j \xi A^{0}(e_i, e_j)) \diff \mu- \nabla_j(\xi \nabla_j H) \diff \mu - \abs{\nabla\overline{\CalW}(f)}^2\diff \mu \\
		&\quad + \lambda  \nabla_{sc}\overline{\CalW}(f) \diff \mu.
	\end{align}
	Again, integration by parts and \eqref{eq:nabla H A A^0} yield
	\begin{align}
		&\partial_t \int |A^{0}|^2\eta\diff \mu + \int \abs{\nabla\overline{\CalW}(f)}^2\eta \diff \mu \\
		&\qquad = \int \left[ -2 \nabla_j \xi A^{0}_{ij}\nabla_i \eta + \xi \nabla_j H\nabla_j\eta + \lambda  \nabla_{sc}\overline{\CalW}(f)\eta \right] \diff \mu +\int |A^{0}|^2 \partial_t \eta \diff \mu\\
		&\qquad = \int 2 \xi (\nabla_j A^{0}){(e_i, e_j)}\nabla_i \eta \diff \mu + 2 \xi A^{0}(e_i, e_j)\nabla^2_{ji} \eta \diff \mu + \int \xi \nabla_j H\nabla_j\eta \diff \mu \\
		&\qquad \quad  + \lambda\int \nabla_{sc}\overline{\CalW}(f)\eta  \diff \mu + \int|A^{0}|^2\partial_t \eta\diff\mu\\
		&\qquad = -2\int  \nabla_{sc}\overline{\CalW}(f) \nabla_i H\nabla_i \eta\diff \mu + 2\lambda\int \nabla_i H\nabla_i \eta\diff \mu  - 2 \int \nabla_{sc}\overline{\CalW}(f) A^{0}_{ij}\nabla^2_{ji} \eta \diff \mu\\
		&\qquad \quad + 2\lambda\int A^{0}_{ij}\nabla_{ij}^2\eta\diff \mu + \lambda \int \Delta H \eta\diff \mu +\lambda\int |A^{0}|^2 H\eta\diff \mu + \int|A^{0}|^2\partial_t \eta\diff\mu.\\
		&\qquad = -2\int  \nabla_{sc}\overline{\CalW}(f) \nabla_i H\nabla_i \eta\diff \mu - 2 \int  \nabla_{sc}\overline{\CalW}(f) A^{0}_{ij}\nabla^2_{ji} \eta \diff \mu + \int|A^{0}|^2\partial_t \eta\diff\mu\\
		&\qquad \quad + \lambda\left[ - \int \nabla_i H \nabla_i \eta\diff \mu + 2 \int \nabla_i H \nabla_i\eta\diff \mu+ \int \Delta H \eta\diff\mu + \int|A^{0}|^2 H \eta\diff \mu\right]
	\end{align}
	{The claim follows from integrating by parts in the terms involving $\lambda$.}
\end{proof}
}

\section{Localized energy estimates}\label{sec:loc int est}
In this section, we will use the interpolation inequalities developed in \cite{KSGF,KSSI}. As we shall see, control over the {concentration of curvature} {and $\lambda$ enables us} to estimate derivatives of arbitrary order of the second fundamental form.

In the following, we restrict to a particular class of test functions. Let $\tilde{\gamma}\in C^{\infty}_c(\R^3)$ with $0\leq \tilde{\gamma}\leq 1$ and $\norm{D\tilde{\gamma}}{\infty}\leq \Lambda, \norm{D^2\tilde{\gamma}}{\infty}\leq \Lambda^2$ for some $\Lambda>0$. Then setting
\begin{align}
\gamma&\defeq \tilde{\gamma}\circ f \colon[0,T)\times \Sigma\to\R\text{ we find }\\
\abs{\nabla\gamma}&\leq C\Lambda \text{ and }|\nabla^2\gamma| \leq C\Lambda^2 +C |A|\Lambda \label{eq:gamma},
\end{align}
\add{for a universal constant $C\in (0,\infty)$. Note} that $\gamma_t$ has compact support in space for all $0\leq t<T$. The estimates in \eqref{eq:gamma} follow by the identities
\begin{align}\label{eq:Derivativesgamma tildegamma}
\nabla \gamma &= (D\tilde{\gamma} \circ f) Df \\
\nabla^2 \gamma &= (D^2\tilde \gamma \circ f)(Df \cdot, Df\cdot) + (D\tilde{\gamma}\circ f) A(\cdot, \cdot).
\end{align}
{Unless specified otherwise, constants $C\in (0, \infty)$ are always universal and are allowed to change from line to line.}

\add{Following the strategy in \cite[Secion 3]{KSGF}, we can prove the following}

\begin{lem}\label{lem:curvatureIntegralsEstimate} Let $f\colon[0,T)\times \Sigma\to\R^3$ be a volume-preserving Willmore flow. We have
	\begin{align}
	\partial_t \int \frac{1}{2}\abs{H}^2\gamma^4 \diff \mu + \frac{1}{2}\int \abs{\nabla\overline{\CalW}(f)}^2\gamma^4\diff \mu &\leq C\Lambda^2 \int |A|^2H^2\gamma^2\diff \mu + C\Lambda^4\int_{[\gamma>0]}H^2\diff \mu \\
	& \quad	+ \lambda\int |A^{0}|^2 H \gamma^4\diff \mu + C\Lambda\abs{\lambda}\int H^2 \gamma^3 \diff \mu,
	\end{align}
	for some universal constant $C$ with $0<C<\infty$ and
	\begin{align}
	\partial_t \int|A^0|^2\gamma^4\diff \mu + \frac{1}{2} \int\abs{\nabla\overline{\CalW}(f)}^2\gamma^4\diff \mu &\leq C\Lambda^2 \int |A^0|^2 |A|^2 \gamma^2\diff \mu + C\Lambda^4 \int_{[\gamma>0]} \abs{A}^2\diff \mu \\
	&\quad + \lambda \int|A^0|^2H\gamma^4\diff \mu +C\Lambda \abs{\lambda}\int|A^0|^2\gamma^3\diff \mu.
	\end{align}
\end{lem}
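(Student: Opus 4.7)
The strategy is to apply \Cref{lem:EvolutionOfCurvatureIntegrals} with the specific choice $\eta = \gamma^4$ and then carefully absorb every term on the right-hand side. The extra power (choosing $\gamma^4$ rather than $\gamma^2$) gives the room needed when the Hessian of $\eta$ hits the second fundamental form. The main novelty compared to \cite{KSGF,KSSI} is that $\partial_t \eta$ now contains a contribution from $\lambda$, since $\gamma = \tilde\gamma\circ f$ and $\partial_t f = \xi\nu$ with $\xi = -\nabla_{sc}\overline{\CalW}(f) + \lambda$. This is precisely where the $\gamma^3$-weighted $\lambda$ term in the stated inequality comes from.

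First I would compute, using \eqref{eq:gamma}, the pointwise bounds
\begin{align}
\abs{\nabla\gamma^4} \leq 4\Lambda\gamma^3,\qquad \abs{\nabla^2\gamma^4} + \abs{\Delta\gamma^4} \leq C\Lambda^2\gamma^2 + C\Lambda\abs{A}\gamma^3,
\end{align}
and observe that $\abs{\partial_t\gamma^4} \leq 4\Lambda\gamma^3(\abs{\nabla_{sc}\overline{\CalW}(f)} + \abs{\lambda})$. Plugging $\eta = \gamma^4$ into the first identity of \Cref{lem:EvolutionOfCurvatureIntegrals} and using Young's inequality repeatedly (with a small parameter $\varepsilon>0$ to be absorbed in $\tfrac{1}{2}\int\abs{\nabla\overline{\CalW}(f)}^2\gamma^4\diff\mu$), the term $-2\int \nabla_{sc}\overline{\CalW}(f)\langle\nabla H,\nabla\gamma^4\rangle\diff\mu$ produces a contribution $C\Lambda^2\int\abs{\nabla H}^2\gamma^2\diff\mu$, which via \eqref{eq:nabla H A A^0} is controlled by $\int\abs{\nabla A^0}^2\gamma^2$ and ultimately by an interpolation producing $\abs{A}^2 H^2$-type and lower order terms; a more direct route is to rewrite $\nabla H$ using the definition of $\nabla_{sc}\overline{\CalW}$ to avoid introducing $\nabla H$ at all, namely split off $\nabla_{sc}\overline{\CalW}(f)\gamma^2$ against $\nabla H \cdot (\Lambda\gamma)$ and use Young to get $\tfrac{\varepsilon}{2}|\nabla\overline{\CalW}(f)|^2\gamma^4 + C\Lambda^2|\nabla H|^2\gamma^2$, then reabsorb $\nabla H$ via Simons' identity \eqref{eq:Simons'Identity} into $P^0_3(A)$-terms that are bounded by $\abs{A}^2\abs{H}^2\gamma^2$ after another Young step.

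The term $-\int \nabla_{sc}\overline{\CalW}(f)\,H\,\Delta\gamma^4\diff\mu$ is split according to the two parts of $\Delta\gamma^4$: the $C\Lambda^2\gamma^2$-part yields $\tfrac{\varepsilon}{2}\abs{\nabla\overline{\CalW}(f)}^2\gamma^4 + \tfrac{C}{\varepsilon}\Lambda^4 H^2\mathds{1}_{[\gamma>0]}$, whereas the $C\Lambda\abs{A}\gamma^3$-part yields $\tfrac{\varepsilon}{2}\abs{\nabla\overline{\CalW}(f)}^2\gamma^4 + \tfrac{C}{\varepsilon}\Lambda^2\abs{A}^2 H^2\gamma^2$. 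For $\int \tfrac{1}{2}H^2\partial_t\gamma^4\diff\mu$, the $\abs{\nabla_{sc}\overline{\CalW}(f)}$-part is absorbed by Young as $\tfrac{\varepsilon}{2}\abs{\nabla\overline{\CalW}(f)}^2\gamma^4 + C\Lambda^2 H^4\gamma^2$, and then the pointwise bound $H^4 \leq 2\abs{A}^2H^2$ (from \eqref{eq:AA0H}) converts this into the desired $\abs{A}^2H^2\gamma^2$ term; the $\abs{\lambda}$-part contributes exactly $C\Lambda\abs{\lambda}\int H^2\gamma^3\diff\mu$, matching the stated RHS. Finally, the $\lambda\int\abs{A^0}^2 H\gamma^4\diff\mu$ term is kept unchanged. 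Choosing $\varepsilon$ small so that the sum of all Young-absorbed $\abs{\nabla\overline{\CalW}(f)}^2\gamma^4$ terms is at most $\tfrac{1}{2}\int\abs{\nabla\overline{\CalW}(f)}^2\gamma^4\diff\mu$ yields the first inequality.

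For the $\abs{A^0}^2$-estimate the scheme is identical, starting from \eqref{eq:dtIntA^0^2Eq}: the term involving $\nabla^2\eta$ acts on $A^0$ rather than on $H\cdot\mathrm{tr}$, so the same splitting of $\nabla^2\gamma^4$ produces $C\Lambda^4\abs{A^0}^2\mathds{1}_{[\gamma>0]}$ (bounded by $C\Lambda^4\abs{A}^2\mathds{1}_{[\gamma>0]}$) and $C\Lambda^2\abs{A^0}^2\abs{A}^2\gamma^2$; the $\partial_t\gamma^4$-contribution again gives an $\abs{A^0}^4\gamma^2$ term, absorbed using $\abs{A^0}^2 \leq \abs{A}^2$, together with the desired $C\Lambda\abs{\lambda}\int\abs{A^0}^2\gamma^3\diff\mu$. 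The main technical point to get right will be bookkeeping the factors of $\gamma$ so that each Young application lands the correct power ($\gamma^4$ for absorption, $\gamma^2$ or $\gamma^3$ for the remaining terms), and checking that the exceptional $\gamma^3$-weight on the $\abs{\lambda}$-term is exactly what is produced by $\partial_t(\gamma^4) = 4\gamma^3\partial_t\gamma$ and cannot be improved without further integration by parts.
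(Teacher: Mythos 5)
Your overall route is the paper's: plug $\eta=\gamma^4$ into \Cref{lem:EvolutionOfCurvatureIntegrals}, observe that $\partial_t\eta$ now carries an extra $\abs{\lambda}$-contribution via $\abs{\partial_t\gamma^4}\leq 4\Lambda\gamma^3(\abs{\nabla\overline{\CalW}(f)}+\abs{\lambda})$, keep the term $\lambda\int\abs{A^0}^2H\gamma^4\diff\mu$ untouched, and estimate everything else as in \cite[Lemma 3.2]{KSSI}. Your bookkeeping of the two $\lambda$-terms, of $\nabla^2\gamma^4$ and $\Delta\gamma^4$, and of the $\partial_t\gamma^4$-contribution (including the pointwise bound $H^4\leq 2\abs{A}^2H^2$) is correct and is exactly what the paper's (very terse) proof relies on.

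There is, however, a genuine flaw in your treatment of $-2\int\nabla_{sc}\overline{\CalW}(f)\langle\nabla H,\nabla\gamma^4\rangle\diff\mu$. After Young's inequality you are left with $C\Lambda^2\int\abs{\nabla H}^2\gamma^2\diff\mu$, and your preferred ``more direct route'' --- reabsorbing $\nabla H$ via Simons' identity \eqref{eq:Simons'Identity} into $P^0_3(A)$-terms bounded by $\abs{A}^2H^2\gamma^2$ --- cannot work: Simons' identity relates the \emph{second}-order quantities $\Delta A$ and $\nabla^2 H$, and no pointwise identity converts the first-order quantity $\abs{\nabla H}^2$ into zeroth-order curvature. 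The correct step (the one hidden in \cite[Lemma 3.2]{KSSI}, which you only gesture at as ``interpolation'') is a further integration by parts,
\begin{align}
\Lambda^2\int\abs{\nabla H}^2\gamma^2\diff\mu = -\Lambda^2\int H\,\Delta H\,\gamma^2\diff\mu - 2\Lambda^2\int H\gamma\langle\nabla H,\nabla\gamma\rangle\diff\mu,
\end{align}
followed by substituting $\Delta H=\nabla_{sc}\overline{\CalW}(f)-\abs{A^0}^2H$ in the first term, so that Young's inequality yields $\varepsilon\int\abs{\nabla\overline{\CalW}(f)}^2\gamma^4\diff\mu + C\Lambda^4\int_{[\gamma>0]}H^2\diff\mu + C\Lambda^2\int\abs{A}^2H^2\gamma^2\diff\mu$, while the second term is split as $\frac{1}{2}\Lambda^2\int\abs{\nabla H}^2\gamma^2\diff\mu + 2\Lambda^4\int_{[\gamma>0]}H^2\diff\mu$ with the half absorbed on the left. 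Without this step your argument does not close, because $\int\abs{\nabla H}^2\gamma^2\diff\mu$ does not appear on the right-hand side of the lemma and cannot be bounded pointwise by the terms that do. The same correction is needed for the $\langle\nabla H,\nabla\eta\rangle$-term in the $\abs{A^0}^2$-estimate.
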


\begin{proof}
	\add{See \Cref{sec:appL31}.}
\end{proof}

Under the assumption of non-concentrated curvature, the following  estimate by Kuwert and Schätzle allows us to locally control derivatives up to second order of the second fundamental form by the localized Willmore gradient and the localized energy. \add{In the form stated below, it follows directly from \cite[Proposition 2.6 and Lemma 4.2]{KSSI}.}

\begin{prop}[\add{\cite{KSSI}}]\label{prop:KSSIProp2.6MitA^6}
	There exist absolute constants $\varepsilon_0,C \in (0,\infty)$ such that if \linebreak $f\colon\Sigma\to\R^3$ is an immersion with
	\begin{align}
	\int_{[\gamma>0]} \abs{A}^2 \diff \mu <\varepsilon_0,
	\end{align}
	for some $\gamma$ as in \eqref{eq:gamma}, then  we have
	\begin{align}
	\int \left(\abs{\nabla^2A}^2+\abs{A}^2\abs{\nabla A}^2 + \abs{A}^6\right)\gamma^{4}\diff \mu \leq C\int \abs{\nabla\overline{\CalW}(f)}^2\gamma^4 \diff \mu + C \Lambda^4 \int_{[\gamma>0]} \abs{A}^2\diff \mu.
	\end{align}
\end{prop}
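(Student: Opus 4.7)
The strategy is to follow the Kuwert--Sch\"atzle estimate \cite[Proposition 2.6]{KSSI} for $\int\abs{\nabla^2 A}^2\gamma^4\diff\mu$, and to enhance it by a final Michael--Simon Sobolev step that absorbs the additional critical terms $\int\abs{A}^2\abs{\nabla A}^2\gamma^4\diff\mu$ and $\int\abs{A}^6\gamma^4\diff\mu$ into the left-hand side by exploiting the smallness hypothesis on the concentration of curvature.

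The first step reduces $\int\abs{\nabla^2 A}^2\gamma^4\diff\mu$ to $\int\abs{\nabla\overline{\CalW}(f)}^2\gamma^4\diff\mu$ modulo lower-order errors. By \eqref{eq:nabla W sc} we have $\nabla_{sc}\overline{\CalW}(f)=\Delta H + \abs{A^0}^2 H$, hence pointwise $(\Delta H)^2\leq 2\abs{\nabla\overline{\CalW}(f)}^2 + C\abs{A}^6$. Two integrations by parts combined with the commutator relation \eqref{eq:Nabla*NablaCommutator} convert $\int\abs{\nabla^2 H}^2\gamma^4\diff\mu$ into $\int(\Delta H)^2\gamma^4\diff\mu$ up to error terms of the form $\Lambda^2\int\abs{\nabla A}^2\gamma^2\diff\mu + \Lambda^4\int_{[\gamma>0]}\abs{A}^2\diff\mu + \int\abs{A}^2\abs{\nabla A}^2\gamma^4\diff\mu$. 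Simons' identity \eqref{eq:Simons'Identity} then gives $\abs{\nabla^2 A}^2 \leq 2\abs{\nabla^2 H}^2 + C\abs{A}^6$, which combined with the above yields
\[
\int\abs{\nabla^2 A}^2\gamma^4\diff\mu \leq C\int\abs{\nabla\overline{\CalW}(f)}^2\gamma^4\diff\mu + C\int\bigl(\abs{A}^6+\abs{A}^2\abs{\nabla A}^2\bigr)\gamma^4\diff\mu + C\Lambda^4\int_{[\gamma>0]}\abs{A}^2\diff\mu,
\]
where the cut-off contribution $\Lambda^2\int\abs{\nabla A}^2\gamma^2\diff\mu$ has been re-absorbed by interpolation between $\nabla A$ and $\nabla^2 A$ via Young's inequality.

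The second step applies the Michael--Simon Sobolev inequality on the immersed surface to the test function $v=\abs{A}^3\gamma^2$ (and, in a parallel argument, to $v=\abs{A}\gamma^2$, using Kato's inequality $\abs{\nabla\abs{A}}\leq\abs{\nabla A}$). Followed by Cauchy--Schwarz, this produces
\[
\int\bigl(\abs{A}^6 + \abs{A}^2\abs{\nabla A}^2\bigr)\gamma^4\diff\mu \leq C\!\int_{[\gamma>0]}\!\abs{A}^2\diff\mu \cdot\!\int\bigl(\abs{\nabla^2 A}^2+\abs{A}^2\abs{\nabla A}^2+\abs{A}^6\bigr)\gamma^4\diff\mu + C\Lambda^4\!\int_{[\gamma>0]}\!\abs{A}^2\diff\mu.
\]
Under the smallness assumption, choosing $\varepsilon_0$ so small that $C\varepsilon_0<1/2$ lets us absorb the critical terms into the left-hand side, and combining with the inequality from the first step concludes the proof.

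The expected main obstacle is the careful bookkeeping in the first step: Simons' identity and the two integrations by parts generate numerous commutator and cut-off error terms, and one must verify that the uncontrolled contributions arise precisely as $\abs{A}^2\abs{\nabla A}^2\gamma^4$ and $\abs{A}^6\gamma^4$ with the correct weights, so that the Michael--Simon step indeed closes the resulting system of inequalities. Once this structural observation is in place, the remainder is a routine application of Young's and Cauchy--Schwarz inequalities.
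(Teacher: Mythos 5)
Your plan is essentially the paper's proof unfolded: the paper simply cites \cite[Proposition 2.6]{KSSI} for the $\abs{\nabla^2A}^2$ and $\abs{A}^2\abs{\nabla A}^2$ terms and then adds the $\abs{A}^6$ term via the interpolation estimate of \Cref{lem:A^6Estimate} (absorbing it using the smallness of $\int_{[\gamma>0]}\abs{A}^2\diff\mu$), and both of those cited results rest on exactly the machinery you describe — Simons' identity, integration by parts with the commutator relation, and Michael--Simon-type multiplicative Sobolev inequalities closed by absorption. One point to fix when you carry this out: the inequality $\abs{\nabla^2 A}^2\leq 2\abs{\nabla^2 H}^2+C\abs{A}^6$ is \emph{false pointwise}; Simons' identity \eqref{eq:Simons'Identity} only controls $\Delta A$ pointwise by $\nabla^2 H$ plus cubic terms, and recovering the full Hessian requires the integrated identity $\int\abs{\nabla^2 A}^2\gamma^4\diff\mu\leq\int\abs{\Delta A}^2\gamma^4\diff\mu+(\text{commutator and cut-off errors})$, obtained by a further double integration by parts — exactly the step you already perform for $H$, so this is a repair of bookkeeping rather than of strategy. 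Likewise, the Michael--Simon step for $\int\abs{A}^2\abs{\nabla A}^2\gamma^4\diff\mu$ is not a direct application to $v=\abs{A}\gamma^2$; it requires the interpolation argument of \cite[Lemma 4.2]{KSGF}, but again the absorption mechanism you describe is the correct one.
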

\delete{
\begin{proof}
	Follows from {\cite[Proposition 2.6]{KSSI}} and the estimate in \Cref{lem:A^6Estimate}.
\end{proof}
}
{This will be the crucial tool in studying the volume-preserving Willmore flow if the \emph{concentration of curvature} is controlled, cf. \Cref{sec:BlowUp,sec:Int est Lambda,sec:lifespan}. Note that in \Cref{lem:curvatureIntegralsEstimate}, a term involving $\lambda$ and a cubic power of $A$ occur. However, the energy decay only allows us to control square powers of $A$, hence we have to pay the price in terms of a higher power of the Lagrange multiplier. 
}
\begin{prop}\label{prop:NewLemma19}
	Suppose $f\colon [0,T)\times \Sigma\to\R^3$ is a volume-preserving Willmore flow. If
	\begin{align}
	\int_{[\gamma>0]} \abs{A}^2\diff \mu <\varepsilon_0 \quad\text{at time }t\in [0,T),
	\end{align}
	where $\varepsilon_0>0$ is as in \Cref{prop:KSSIProp2.6MitA^6} and $\gamma$ is as in \eqref{eq:gamma}, then we have
	\begin{align}
	&\partial_t \int \abs{A}^2\gamma^4\diff \mu + c_0 \int\left(\abs{\nabla^2 A}^2+ \abs{A}^2\abs{\nabla A}^2+\abs{A}^6\right)\gamma^4 \diff \mu \\
	&\qquad \leq C\Lambda^4 \int_{[\gamma>0]} \abs{A}^2\diff \mu + C \abs{\lambda}^{\frac{4}{3}} \int \abs{A}^2\gamma^4\diff \mu
	\end{align}
	at time $t$ for some universal constants $c_0,C\in (0,\infty)$.
\end{prop}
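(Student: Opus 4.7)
The strategy is to combine the two evolution identities from \Cref{lem:curvatureIntegralsEstimate} using \eqref{eq:AA0H} and then to absorb the resulting right-hand side via the integral control provided by \Cref{prop:KSSIProp2.6MitA^6}. Adding the two estimates, using $|A|^2=|A^0|^2+\tfrac12 H^2$ on the left, and $|H|\le\sqrt 2\,|A|$, $|A^0|\le |A|$ on the right, produces an inequality of the form
\begin{align*}
\partial_t\!\int |A|^2\gamma^4\diff\mu + \int |\nabla\overline{\CalW}(f)|^2\gamma^4\diff\mu
&\le C\Lambda^2\!\int |A|^4\gamma^2\diff\mu + C\Lambda^4\!\int_{[\gamma>0]}\!|A|^2\diff\mu \\
&\quad + C|\lambda|\!\int |A|^3\gamma^4\diff\mu + C\Lambda|\lambda|\!\int |A|^2\gamma^3\diff\mu,
\end{align*}
where the two terms in the last line are the new contributions caused by the nonlocal Lagrange multiplier.

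Next I would dispose of every term on the right-hand side by pointwise Young inequalities. The purely geometric term $\Lambda^2|A|^4\gamma^2$ is handled as in \cite{KSSI} via the split $\Lambda^2|A|^4\gamma^2=(|A|^3\gamma^2)\cdot(\Lambda^2|A|)$ with exponents $(2,2)$, yielding $\varepsilon|A|^6\gamma^4+C_\varepsilon\Lambda^4|A|^2\mathbf{1}_{[\gamma>0]}$. For the two $\lambda$-terms I would apply Young with exponents $(4,\tfrac43)$. Concretely, the decomposition $|\lambda||A|^3=|A|^{3/2}\cdot(|\lambda||A|^{3/2})$ gives $\varepsilon|A|^6+C_\varepsilon|\lambda|^{4/3}|A|^2$, while $\Lambda|\lambda||A|^2\gamma^3=(\Lambda|A|^{1/2})\cdot(|\lambda||A|^{3/2}\gamma^3)$ gives $\varepsilon\Lambda^4|A|^2+C_\varepsilon|\lambda|^{4/3}|A|^2\gamma^4$; in the latter the first term can be restricted to $\{\gamma>0\}$ since the left-hand side is. The exponent $\tfrac43$ on $\lambda$ is forced: it is the only one producing a companion factor ($|A|^{3/2}$, whose fourth power is $|A|^6$) that can be paid for out of the $|A|^6$-budget, and it is precisely the scaling-compatible exponent already flagged in \eqref{eq:scale lambda^4/3}.

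Finally I would invoke \Cref{prop:KSSIProp2.6MitA^6}, available exactly because of the standing curvature-concentration assumption, to bound $\int|\nabla\overline{\CalW}(f)|^2\gamma^4$ from below by $c\int(|\nabla^2 A|^2+|A|^2|\nabla A|^2+|A|^6)\gamma^4 - C\Lambda^4\int_{[\gamma>0]}|A|^2$. Choosing $\varepsilon$ in the three Young steps small enough compared with $c$, the $\varepsilon\int|A|^6\gamma^4$-type terms get absorbed into the $|A|^6$-term on the left, while the remaining error terms collapse exactly into $C\Lambda^4\int_{[\gamma>0]}|A|^2+C|\lambda|^{4/3}\int|A|^2\gamma^4$, giving the claim with an absolute $c_0\in(0,\infty)$.

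The only genuinely new difficulty compared with the classical Kuwert--Schätzle argument is identifying the Young exponent $\tfrac43$ and verifying that the same exponent works simultaneously for both $\lambda$-contributions, despite their different algebraic shapes ($|\lambda||A|^3\gamma^4$ versus $\Lambda|\lambda||A|^2\gamma^3$), while remaining compatible both with the $|A|^6$-absorption provided by \Cref{prop:KSSIProp2.6MitA^6} and with the $\int_{[\gamma>0]}|A|^2$ term that is the only available companion for $\Lambda^4$. Once this scaling-dictated choice is made, the remainder of the argument is a straightforward bookkeeping following \cite[Proposition 4.5]{KSSI}.
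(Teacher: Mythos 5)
Your proposal is correct and follows essentially the same route as the paper: add the two identities of \Cref{lem:curvatureIntegralsEstimate} via \eqref{eq:AA0H}, invoke \Cref{prop:KSSIProp2.6MitA^6} to convert $\int|\nabla\overline{\CalW}(f)|^2\gamma^4\diff\mu$ into the good cubic term, and then absorb the three error terms by Young's inequality with the scaling-dictated exponents $(2,2)$ and $(4,\tfrac43)$, exactly as in the paper. The only (immaterial) difference is the order in which \Cref{prop:KSSIProp2.6MitA^6} is applied relative to the Young estimates.
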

\begin{proof}
	\delete{First, by \eqref{eq:AA0H} and \Cref{lem:curvatureIntegralsEstimate}  we have
	\begin{align}
	&\partial_t \int \abs{A}^2\gamma^4 \diff \mu + \int\abs{\nabla\overline{\CalW}(f)}^2\gamma^4\diff \mu \\
	&\quad \leq C\Lambda^2 \int |A|^4\gamma^2\diff \mu + C \Lambda^4 \int_{[\gamma>0]} |A|^2\diff \mu + 2 \lambda\int |A^0|^2 H\gamma^4\diff \mu + C\Lambda\abs{\lambda} \int |A|^2\gamma^3\diff\mu.
	\end{align}
	Combining this with \Cref{prop:KSSIProp2.6MitA^6} {and estimating $\abs{A^0}^2\abs{H}\leq C\abs{A}^3$ by \eqref{eq:AA0H}} yields}\add{Combining \Cref{lem:curvatureIntegralsEstimate}, \Cref{prop:KSSIProp2.6MitA^6} and \eqref{eq:AA0H}, we find}
	\begin{align}
	&\partial_t \int \abs{A}^2\gamma^4 \diff \mu +c_0\int \left(\abs{\nabla^2A}^2+\abs{A}^2\abs{\nabla A}^2 + \abs{A}^6\right)\gamma^{4}\diff \mu \\
	&\quad \leq C\Lambda^2 \int |A|^4\gamma^2\diff \mu +C \Lambda^4 \int_{[\gamma>0]} |A|^2\diff \mu + C \abs{\lambda}\int |A|^3\gamma^4\diff \mu + C\Lambda\abs{\lambda} \int |A|^2\gamma^3\diff\mu.
	\end{align}
	For the first term on the right hand side above, for  $\varepsilon>0$ we estimate
	\begin{align}
	\Lambda^2 \int \abs{A}^4\gamma^2\diff \mu \leq \varepsilon \int \abs{A}^6\gamma^4\diff \mu +  {C(\varepsilon)}\Lambda^4 \int_{[\gamma>0]} \abs{A}^2\diff \mu.
	\end{align}
	{For the third term, we use Young's inequality with $p=4, q=\frac{4}{3}$ to obtain}
	\begin{align}
	\lambda\int \abs{A}^{\frac{3}{2}+\frac{3}{2}}\gamma^4\diff \mu \leq \varepsilon \int \abs{A}^6 \gamma^4\diff \mu + C(\varepsilon) \abs{\lambda}^{\frac{4}{3}} \int \abs{A}^2 \gamma^4\diff \mu.
	\end{align}
	Similarly for the fourth term, we find
	\begin{align}
	\Lambda \abs{\lambda} \int\abs{A}^{\frac{1}{2}+\frac{3}{2}}\gamma^3\diff \mu \leq C   \Lambda^4 \int_{[\gamma>0]}\abs{A}^2\diff \mu+C \abs{\lambda}^{\frac{4}{3}} \int\abs{A}^2\gamma^4\diff \mu.
	\end{align}
	Taking $\varepsilon>0$ small enough and absorbing yields the claim.
\end{proof}
The integrated form of \Cref{prop:NewLemma19} will be particularly useful.

\begin{cor}\label{prop:New30}
	Let $f\colon [0,T)\times \Sigma\to\R^3$ be a volume-preserving Willmore flow such that for  $\varepsilon_0>0$ as in \Cref{prop:KSSIProp2.6MitA^6} and $\gamma$ as in \eqref{eq:gamma} we have
	\begin{align}
	\int_{[\gamma>0]} \abs{A}^2\diff \mu  \leq \varepsilon< \varepsilon_0\quad \text{for all }0\leq t<T.
	\end{align}
	Then there exist universal constants  $c_0, C\in (0,\infty)$ such that for all $0\leq t< T$ we have
	\begin{align}
	&\int_{[\gamma=1]}\abs{A}^2\diff \mu + c_0 \int_{0}^t \int_{[\gamma=1]}\left(\abs{\nabla^2A}^2 + \abs{A}^2\abs{\nabla A}^2 + \abs{A}^6\right)\diff \mu \\
	&\qquad\leq \int_{[\gamma_0>0]}\abs{A_0}^2\diff \mu_0 + C\Lambda^4\varepsilon t + C \varepsilon\int_{0}^t |\lambda(\tau)|^{\frac{4}{3}}\diff \tau.\label{eq:New30 statement}
	\end{align}
	{Here we used the notation $\int_{[\gamma_0>0]}\abs{A_0}^2\diff \mu_0 = \left.\int_{[\gamma>0]}\abs{A}^2\diff \mu\right\vert_{t=0}$.}
\end{cor}
Note that in order to bound the left hand side of \eqref{eq:New30 statement} up to time $t=T$, the control of the curvature concentration alone does not suffice. Recalling the nonlocal nature of the evolution \eqref{eq:VpWF}, this is not entirely surprising. However, the above result shows that this lack of control can be compensated, if in addition we can bound the $L^{4/3}(0,T)$-norm of $\lambda$, a spatially global quantity, which behaves correctly under parabolic rescaling by \Cref{rem:ParabolicScaling}.
We will discuss under which assumptions $\lambda\in L^{4/3}(0,T)$ can be guaranteed in \Cref{sec:Int est Lambda}.

\delete{As a next step, we now want to establish a higher order version of \Cref{lem:curvatureIntegralsEstimate}. Again the $L^{4/3}$-norm of the Lagrange multiplier will play a crucial role. First, we estimate the evolution of the derivatives of $A$.}
\delete{
\begin{lem}\label{lem:EvolutionOfNabla^mAIntegrals}
	Let $f\colon [0,T)\times \Sigma\to\R^3$ be a volume-preserving Willmore flow and $\gamma$ as in \eqref{eq:gamma}. Then for $\phi=\nabla^m A, m\in \N_0$ and $s\geq 2m+4$ we have
	\begin{align}
	&\frac{\diff}{\diff t} \int |\phi|^2\gamma^s\diff \mu + \frac{1}{2} \int \abs{\nabla^2 \phi}\gamma^s\diff \mu \\
	&\leq  C\left( \abs{\lambda}^{\frac{4}{3}}  + \norm{A}{L^{\infty}([\gamma>0])}^4\right) \int\abs{\phi}^2\gamma^s\diff \mu  + C\left(1+ \abs{\lambda}^{\frac{4}{3}}+ \norm{A}{L^{\infty}([\gamma>0])}^4\right)\int_{[\gamma>0]}\abs{A}^2\diff\mu
	\end{align}
	where $C=C(s,m, \Lambda)>0$.
\end{lem}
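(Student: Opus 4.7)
The plan is to mimic the argument of \cite[Proposition 4.1 or Lemma 4.2]{KSSI} (without $\lambda$) while carefully tracking how the Lagrange multiplier enters. Starting from \Cref{lem:localEvolNabla^mA}, we have $\partial_t \phi + \Delta^2 \phi = P^{m+2}_3(A)+P^m_5(A)+\lambda P^m_2(A)$, and from \eqref{eq:dtdmu} we have $\partial_t(\diff\mu) = -H\xi\diff\mu$ with $\xi = -\nabla_{sc}\overline{\CalW}(f)+\lambda$. Thus
\begin{align}
\frac{\diff}{\diff t}\int|\phi|^2\gamma^s\diff\mu = 2\int\langle\phi,\partial_t\phi\rangle\gamma^s\diff\mu + \int|\phi|^2 s\gamma^{s-1}\partial_t\gamma\diff\mu - \int|\phi|^2\gamma^s H\xi\diff\mu.
\end{align}

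Next I would integrate by parts twice in the biharmonic term $-2\int\langle\phi,\Delta^2\phi\rangle\gamma^s\diff\mu$ to extract the leading coercive piece $-2\int|\nabla^2\phi|^2\gamma^s\diff\mu$, picking up commutator contributions of the schematic form $\nabla^2\phi*\nabla\phi*\nabla\gamma*\gamma^{s-1}$ and $\nabla^2\phi*\phi*\nabla^2(\gamma^s)$. Using $|\nabla\gamma|\le\Lambda$ and $|\nabla^2\gamma|\le\Lambda^2+\Lambda|A|$ from \eqref{eq:gamma}, Young's inequality absorbs a quarter of $\int|\nabla^2\phi|^2\gamma^s\diff\mu$, leaving error terms of the form $C(s,\Lambda)\int|\phi|^2\gamma^{s-2}\diff\mu$ plus a term linear in $\int|A|^2|\phi|^2\gamma^{s-2}\diff\mu$, which is harmless as it fits the $\|A\|_\infty^2$ bound.

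The polynomial-in-$A$ terms $2\int\langle\phi,P^{m+2}_3(A)\rangle\gamma^s\diff\mu$ and $2\int\langle\phi,P^m_5(A)\rangle\gamma^s\diff\mu$, together with those arising from $-H\xi = H\nabla_{sc}\overline{\CalW}(f)-\lambda H$ in the measure derivative, are treated exactly as in the $\lambda$-free Willmore flow: one inserts the divergence structure, redistributes derivatives via integration by parts so that at most $\nabla^2\phi$ appears, uses Hölder together with the Kuwert--Schätzle interpolation estimate \cite[Corollary 5.5]{KSGF}, and finally absorbs $\nabla^2\phi$-factors into another quarter of $\int|\nabla^2\phi|^2\gamma^s\diff\mu$. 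What remains is bounded by $C\|A\|_{L^\infty([\gamma>0])}^4\int|\phi|^2\gamma^s\diff\mu + C\int_{[\gamma>0]}|A|^2\diff\mu$, using that $s\ge 2m+4$ ensures enough powers of $\gamma$ for the boundary integrations by parts.

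The new feature compared to \cite{KSSI} is the Lagrange-multiplier contributions: $2\lambda\int\langle\phi,P^m_2(A)\rangle\gamma^s\diff\mu$ from the flow equation, and the $\lambda$-parts of $-\int|\phi|^2\gamma^s H\xi\diff\mu$ and of $\partial_t\gamma$. After integration by parts and Cauchy--Schwarz these reduce to bounding $|\lambda|\int|\nabla^{m}A|\cdot|\nabla^{i}A|\cdot|\nabla^{m-i}A|\gamma^{s-?}\diff\mu$ and similar cubic-in-$A$ expressions weighted by $|\lambda|$. Applying Young's inequality with conjugate exponents $p=4, q=\frac{4}{3}$ (the same pair used in \Cref{prop:NewLemma19}) splits these as $\varepsilon\int|A|^6\gamma^s\diff\mu + C|\lambda|^{4/3}\int|\phi|^2\gamma^s\diff\mu$ (and analogous terms), where the $|A|^6$-piece is then controlled either by $\|A\|_\infty^4\cdot|A|^2$ or by a final absorption into $\int|\nabla^2\phi|^2\gamma^s\diff\mu$ through \Cref{prop:KSSIProp2.6MitA^6} when $m=0$, and directly by $\|A\|_\infty^4\int|\phi|^2\gamma^s\diff\mu$ when $m\ge 1$. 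The main obstacle is the combinatorial bookkeeping of these interpolation estimates; the power $\tfrac{4}{3}$ is forced precisely because $\lambda$ multiplies a cubic expression in $A$ while the coercive terms are quadratic in $\nabla^2\phi$ and sixth-order in $A$, matching exactly the scaling observed in \Cref{rem:ParabolicScaling}.
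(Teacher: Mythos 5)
Your proposal is correct and follows essentially the same route as the paper: the paper simply packages your explicit integration-by-parts of the biharmonic term into the cited \cite[Lemma 3.2]{KSGF} (restated as \Cref{lem:KSGFL3.2}), then handles the $\lambda P^m_2(A)$ term via \cite[Corollary 5.5]{KSGF} and the $\partial_t\gamma$-term via Young's inequality with exponents $4$ and $\tfrac43$, exactly as you do. One minor caution: your alternative of absorbing the $|A|^6$-piece through \Cref{prop:KSSIProp2.6MitA^6} for $m=0$ would require the small-concentration hypothesis, which this lemma does not assume, so you should rely on the direct bound $|A|^6\le \norm{A}{L^\infty([\gamma>0])}^4|A|^2$ that you also mention.
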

\begin{proof}
	{See \Cref{sec:Higherorder}.}
\end{proof}
}
\add{As in \cite{KSSI}, an appropriate higher order version of \Cref{prop:New30} can be used to prove higher order interior estimates. 
}

\delete{We conclude this section with higher order estimates which do not explicitly depend on the initial datum $f_0$, cf. \cite[Theorem 3.5]{KSSI}. These will be crucial for the blow-up construction in \Cref{sec:BlowUp}.} 
\begin{prop}\label{thm:HigherOrderEstimatesTLocalized}
	Let $f\colon [0,T) \times \Sigma\to\R^3$ be a volume-preserving Willmore flow. Suppose $\rho>0$ is chosen such that $T\leq T^{\ast} \rho^4$ for some $0<T^{\ast}<\infty$ and
	\begin{align}
		\int_{B_{\rho}(x)} \abs{A}^2\diff \mu\leq \varepsilon<\varepsilon_0\quad \text{for all }0\leq t<T,
	\end{align}
	where $x\in \R^3$, $\varepsilon_0>0$ is as in \Cref{prop:KSSIProp2.6MitA^6} and $\int_{0}^T \abs{\lambda}^{\frac{4}{3}}\diff t \leq L <\infty$. Then, for all $m\in \N_0$ and $t\in (0,T)$ we have the estimates
	\begin{align}
		\norm{\nabla^m A}{L^2(B_{\rho/2}(x))} &\leq C(m,T^{*},L)\sqrt{\varepsilon} t^{-\frac{m}{4}}, \\
		\norm{\nabla^m A}{L^{\infty}(B_{\rho/2}(x))} &\leq C(m,T^{*},L) \sqrt{\varepsilon} t^{-\frac{m+1}{4}}.\label{eq:highorder local in time estimate}
	\end{align}
\end{prop}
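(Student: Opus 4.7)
My strategy is to adapt the proof of the analogous result for the unconstrained Willmore flow, cf.\ \cite[Theorem 3.5]{KSSI}, tracking carefully the effect of the Lagrange multiplier $\lambda$, now controlled via the assumed $L^{4/3}$-bound $\int_0^T|\lambda|^{4/3}\,\diff t\leq L$ in time. Using the parabolic rescaling \eqref{eq:parabolic rescaling} together with the invariance \eqref{eq:scale lambda^4/3}, all hypotheses and the conclusion scale correctly, so I may reduce to the case $\rho=1$, whence $T\leq T^*$.

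For the base case $m=0$, I choose $\tilde\gamma\in C_c^\infty(B_1(x))$ with $\tilde\gamma\equiv 1$ on $B_{1/2}(x)$ and $\Lambda\leq C$. Applying \Cref{prop:New30} combined with $T\leq T^*$ and $\int_0^T|\lambda|^{4/3}\diff\tau\leq L$ directly yields
\begin{align}
\int_{B_{1/2}(x)}|A|^2\diff\mu + c_0\int_0^t\!\int_{B_{1/2}(x)}\bigl(|\nabla^2 A|^2 + |A|^2|\nabla A|^2 + |A|^6\bigr)\diff\mu\diff\tau \leq C(T^*,L)\varepsilon,
\end{align}
which establishes the $L^2$-bound for $m=0$ uniformly in $t\in (0,T)$, and in particular the space-time integrability of $|\nabla^2 A|^2$ and $|A|^6$ near $B_{1/2}(x)$.

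For $m\geq 1$, I proceed by induction, shrinking the spatial cutoff at each step, and introduce the time-weighted functional $F_m(t):=t^{m/2}\int|\nabla^m A|^2\gamma^{2m+4}\diff\mu$. Differentiating $F_m$ and inserting \Cref{lem:EvolutionOfNabla^mAIntegrals} yields, after absorbing $\|A\|_{L^\infty([\gamma>0])}^4$ by Michael--Simon interpolation against the parabolic term $\int|\nabla^{m+2}A|^2\gamma^{s}\diff\mu$ (modulo lower-order $L^2$-quantities controlled by the base case and the previous induction step), a Gr\"onwall-type differential inequality of the schematic form
\begin{align}
F_m'(t) \leq C\bigl(|\lambda(t)|^{\frac{4}{3}} + h(t)\bigr)F_m(t) + C\varepsilon\bigl(1 + |\lambda(t)|^{\frac{4}{3}} + h(t)\bigr),
\end{align}
where $h\in L^1(0,T)$ arises from the previously established space-time integrability. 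Since $F_m(0)=0$ thanks to the time-weight, integration together with the bound $\int_0^T|\lambda|^{4/3}\diff\tau\leq L$ yields $F_m(t)\leq C(m,T^*,L)\varepsilon$, which is the claimed $L^2$-estimate. The $L^\infty$-estimate \eqref{eq:highorder local in time estimate} then follows from the Michael--Simon Sobolev embedding $W^{2,2}\hookrightarrow L^\infty$ in dimension two applied to $\nabla^m A$, which loses one additional derivative and hence a factor of $t^{-1/4}$ by parabolic scaling.

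The main obstacle is closing the induction: one must verify that the factor $\|A\|_{L^\infty}^4$ appearing in \Cref{lem:EvolutionOfNabla^mAIntegrals} is indeed an $L^1(0,T)$-coefficient via interpolation from the previously established bounds, and that the nonlocal contribution $|\lambda|^{\frac{4}{3}}$ enters the Gr\"onwall step cleanly, without spoiling the dependence of the constants on only $m$, $T^*$, and $L$. The precise exponent $\frac{4}{3}$, dictated by parabolic scaling as in \Cref{rem:ParabolicScaling}, is exactly what makes the entire argument scale-invariant after reducing to $\rho=1$, and so gives rise to a constant that is independent of the spatial scale.
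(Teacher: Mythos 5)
Your proposal is correct and follows essentially the same route as the paper: reduce to $\rho=1$ by parabolic scaling, get the base case and the space-time integrability of $\abs{\nabla^2A}^2$, $\abs{A}^6$ (hence of $\norm{A}{L^\infty}^4$) from \Cref{prop:New30}, then run a Gr\"onwall induction on \Cref{lem:EvolutionOfNabla^mAIntegrals} with $\abs{\lambda}^{4/3}+\norm{A}{L^{\infty}}^4$ as the $L^1(0,T)$-coefficient, and finish with the Michael--Simon-type $L^\infty$-interpolation. The only cosmetic difference is that the paper uses piecewise-linear time cutoffs $\xi_j$ and inducts on even derivative orders (recovering odd orders by interpolation), whereas you use polynomial weights $t^{m/2}$; both vanish at $t=0$ and serve the same purpose.
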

\add{The proof of \Cref{thm:HigherOrderEstimatesTLocalized} is essentially the same as in \cite[Theorem 3.5]{KSSI}, so it is moved to \Cref{sec:Higherorder}.}

\section{Integral estimates for the Lagrange multiplier}\label{sec:Int est Lambda}
In \Cref{thm:HigherOrderEstimatesTLocalized}, we were able to control all derivatives of the second fundamental form, if the concentration of curvature is sufficiently small and the Lagrange multiplier has some sort of integrability. This section is devoted to showing time integrability of $\lambda$ under certain assumptions. 

\subsection{The \texorpdfstring{$L^{4/3}(0,T)$}{L^(4/3)}-norm of \texorpdfstring{$\lambda$}{lambda} in the case of non-concentration}\label{sec:Int est Lambda L43}

First, we will control the $L^{4/3}$-norm of $\lambda$, which will be the key ingredient in the proof of the lifespan result in \Cref{thm:Lifespan}. We begin by making the following observation for immersions with non-concentrated curvature.

\begin{lem}\label{lem:control rho via V}
	There exists an absolute constant $0<\varepsilon_1<8\pi$ such that if $f\colon \Sigma\to\R^3$ is an immersion, $x_0\in f(\Sigma)$ and $\rho>0$ satisfies 
	\begin{align}
		\int_{B_{\rho}(x_0)} \abs{A}^2\diff \mu \leq \varepsilon <\varepsilon_1,
	\end{align}
	then we have $\rho \leq C \CalA(f)^{\frac{1}{2}}$, where $0<C<\infty$ is an absolute constant.
\end{lem}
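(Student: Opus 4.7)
The plan is to reduce the statement to the area lower bound
\[
\int_{B_\rho(x_0)} \diff \mu \geq c \rho^2
\]
for some absolute $c > 0$; combined with the trivial upper bound $\int_{B_\rho(x_0)} \diff \mu \leq \CalA(f)$, this immediately yields $\rho^2 \leq c^{-1} \CalA(f)$ and hence the conclusion.

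For this area lower bound, I would invoke Simon's monotonicity formula, which for closed immersions $f\colon\Sigma\to\R^3$ states that
\[
\pi \theta(x_0) \leq \frac{1}{\rho^2} \int_{B_\rho(x_0)} \diff \mu + \frac{1}{16} \int_{B_\rho(x_0)} H^2 \diff \mu,
\]
where $\theta(x_0)$ denotes the density of $\mu$ at $x_0$. Since $x_0 \in f(\Sigma)$, we have $\theta(x_0) \geq 1$. By \eqref{eq:AA0H}, $H^2 \leq 2\abs{A}^2$, so the curvature integral on the right is bounded by $\varepsilon/8 < \varepsilon_1/8$, and choosing any $\varepsilon_1 < 8\pi$ makes $c \defeq \pi - \varepsilon_1/8$ a positive absolute constant, completing the argument.

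The only non-elementary ingredient is Simon's monotonicity formula, a classical tool in the analysis of the Willmore energy, so I do not anticipate a real obstacle. The interplay between the factor $1/16$ in the monotonicity formula and the identity $H^2 \leq 2\abs{A}^2$ is precisely what forces the stated threshold $\varepsilon_1 < 8\pi$.
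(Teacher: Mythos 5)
Your proof is correct and follows essentially the same route as the paper: Simon's monotonicity formula at $x_0\in f(\Sigma)$, the bound $H^2\leq 2\abs{A}^2$ from \eqref{eq:AA0H}, and absorption of the curvature term by choosing $\varepsilon_1$ small. The only caveat is that the paper works with the monotonicity inequality in the form $\pi\leq C\big(\rho^{-2}\mu(B_\rho(x_0))+\int_{B_\rho(x_0)}\abs{H}^2\diff\mu\big)$ with an unspecified universal constant $C$ (setting $\varepsilon_1=\pi/(4C)$), whereas your explicit constants $1$ and $1/16$ are not quite the standard ones — the cross term in the monotone quantity costs an extra factor — so the claim that these constants "force" the threshold $8\pi$ is not accurate, though this does not affect the validity of the argument since any positive absolute $\varepsilon_1$ suffices.
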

\begin{proof}
	By Simon's monotonicity formula \add{\cite[(1.4)]{SimonWillmore}} \delete{\eqref{eq:SimonMono 2}} and \eqref{eq:AA0H} for any $x_0 \in f(\Sigma)$ and some universal constant $0<C<\infty$ we have 
	\begin{align}
		\pi \leq C\left(\rho^{-2}\mu(\add{f^{-1}(B_{\rho}(x_0))})+ \int_{B_{\rho}(x_0)}\abs{H}^2\diff \mu\right) \leq C\rho^{-2}\mu(\add{f^{-1}(B_{\rho}(x_0))}) + 2C\varepsilon_1.
	\end{align}
	For $\varepsilon_1\defeq \frac{\pi}{4C}>0$ we thus find $\frac{\pi}{2}\leq C \rho^{-2}\mu(\add{f^{-1}(B_{\rho}(x_0))})\leq C\rho^{-2} \CalA(f)$.
\end{proof}

\begin{prop}\label{prop:lambda4/3Nonconcentration}
	Let $f\colon [0,T)\times \Sigma\to\R^3$ be a volume-preserving Willmore flow with ${\CalW}(f_0)\leq K$ such that $\rho>0$ satisfies
	\begin{align}
	\sup_{0\leq t\leq T} \int_{B_{\rho}(x)}\abs{A}^2\diff \mu\leq \varepsilon<\varepsilon_2\quad \text{ for all } x\in \R^3,
	\end{align}
	where $\varepsilon_2 \defeq \min\{\varepsilon_0, \varepsilon_1\}\in (0,8\pi)$, with $\varepsilon_0$  as in \Cref{prop:KSSIProp2.6MitA^6} and $\varepsilon_1>0$  as in \Cref{lem:control rho via V}. Then, we have
		\begin{align}
			\int_{0}^t \abs{\lambda}^{\frac{4}{3}}\diff \tau &\leq C(K, \chi(\Sigma))\left(\frac{t^{\frac{1}{2}}}{\rho^2}+\frac{t}{\rho^4}\right)\quad \text{ for all }0\leq t<T.
		\end{align}
\end{prop}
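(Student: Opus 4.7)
The plan is to reduce the bound on $\int_0^t |\lambda|^{4/3}\diff\tau$ to a global $L^6$-estimate on $|A|$ via a pointwise-in-time estimate of $|\lambda|$, and then to close the resulting implicit inequality. Three ingredients conspire here: the Gauss--Bonnet bound on $\int|A|^2\diff\mu$ (which is preserved along the flow because $\CalW$ decreases, cf.\ \Cref{rem:W strict Lyapunov}), the area lower bound $\CalA(f)\geq c\rho^2$ provided by \Cref{lem:control rho via V}, and the localized estimate of \Cref{prop:New30}. The threshold $\varepsilon_2=\min\{\varepsilon_0,\varepsilon_1\}$ is precisely what makes the latter two applicable simultaneously.

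First, I would derive a pointwise bound on $|\lambda|$. Since $\Sigma$ is closed, $\int\Delta H\diff\mu=0$, so \eqref{eq:deflambda} and the pointwise estimate $||A^0|^2H|\leq C|A|^3$ (from \eqref{eq:AA0H}) give $|\lambda|\CalA(f)\leq C\int|A|^3\diff\mu$. Interpolating via $\|A\|_{L^3}\leq\|A\|_{L^2}^{1/2}\|A\|_{L^6}^{1/2}$ and using \eqref{eq:A^2GaussBonnet} together with $\CalW(f_\tau)\leq K$ yields $\int|A|^3\diff\mu\leq C(K,\chi(\Sigma))\bigl(\int|A|^6\diff\mu\bigr)^{1/4}$. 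Combined with $\CalA(f)\geq c\rho^2$, this produces the pointwise bound
\[
|\lambda|^{4/3}\leq C(K,\chi(\Sigma))\,\rho^{-8/3}\Bigl(\int|A|^6\diff\mu\Bigr)^{1/3}\quad\text{for all }0\leq\tau<T.
\]

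Second, I would produce a global $L^6$-bound on $|A|$ by summing \Cref{prop:NewLemma19} over a bounded-overlap partition of unity $\{\tilde\gamma_k\}_{k\in\N}\subset C_c^\infty(\R^3)$ subordinate to a $\rho$-scale cover $\{B_\rho(x_k)\}$ of $\R^3$, satisfying $\|D\tilde\gamma_k\|_\infty\leq C/\rho$, $\|D^2\tilde\gamma_k\|_\infty\leq C/\rho^2$, $\sum_k\chi_{B_\rho(x_k)}\leq C$, and $\sum_k\tilde\gamma_k^4\geq c>0$ on $\R^3$. With $\gamma_k\defeq\tilde\gamma_k\circ f$ and $\Lambda=C/\rho$, summing \Cref{prop:NewLemma19}, using the bounded overlap to bound $\sum_k\int_{[\gamma_k>0]}|A|^2\diff\mu$ and $\sum_k\int|A|^2\gamma_k^4\diff\mu$ by $C\int|A|^2\diff\mu\leq C(K,\chi(\Sigma))$, and integrating in time yields
\[
\int_0^t\!\!\int|A|^6\diff\mu\diff\tau\leq C(K,\chi(\Sigma))\Bigl(1+\frac{t}{\rho^4}+\int_0^t|\lambda|^{4/3}\diff\tau\Bigr).
\]

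Finally, setting $X\defeq\int_0^t|\lambda|^{4/3}\diff\tau$, Hölder in time ($\int_0^t F^{1/3}\diff\tau\leq t^{2/3}(\int_0^t F\diff\tau)^{1/3}$) and the two previous displays combine into the implicit inequality $X\leq C(K,\chi(\Sigma))\rho^{-8/3}t^{2/3}(1+\rho^{-4}t+X)^{1/3}$. Cubing and distinguishing whether $X$ dominates $1+\rho^{-4}t$ or not gives $X\leq C(K,\chi(\Sigma))(\rho^{-8/3}t^{2/3}+\rho^{-4}t)$. Young's inequality with exponents $(3/2,3)$, applied to the factorization $\rho^{-8/3}t^{2/3}=(\rho^{-2}t^{1/2})^{2/3}(\rho^{-4}t)^{1/3}$, then converts this into the asserted bound $C(K,\chi(\Sigma))(t^{1/2}/\rho^2+t/\rho^4)$. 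The main obstacle is the second step: one must verify that summing the localized estimates over an infinite cover does not blow up the constants. The key reason it works is that every error term on the right-hand side of \Cref{prop:NewLemma19} is either quadratic in $|A|$ (controlled through bounded overlap by the Gauss--Bonnet bound) or multiplied by $\varepsilon$ (so it can eventually be absorbed), leaving no hidden dependence on $\CalA(f)$, which a priori could grow unboundedly along the flow.
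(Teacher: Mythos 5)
Your proposal is correct and follows essentially the same route as the paper: bound $\abs{\lambda}$ by $\CalA(f)^{-1}\int\abs{A}^3\diff\mu$ with $\CalA(f)\gtrsim\rho^2$ from \Cref{lem:control rho via V}, obtain the global space-time $L^6$-bound on $A$ by summing \Cref{prop:NewLemma19} over a bounded-overlap cover at scale $\rho$ together with the Gau\ss--Bonnet bound on $\int\abs{A}^2\diff\mu$, and then close the resulting self-referential inequality. The only (immaterial) differences are in bookkeeping: the paper passes through $\int\abs{A}^4\diff\mu\leq(\int\abs{A}^6\diff\mu)^{1/2}(\int\abs{A}^2\diff\mu)^{1/2}$ and absorbs via Young's inequality from $X\leq a+aX^{1/2}$, whereas you interpolate $L^3$ between $L^2$ and $L^6$ and resolve $X\leq a(b+X)^{1/3}$ by cubing and a case distinction.
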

\delete{Note that by the invariance of the Willmore energy and \Cref{rem:ParabolicScaling}, this estimate is preserved under parabolic rescaling.}
\begin{proof}[{Proof of \Cref{prop:lambda4/3Nonconcentration}}]
	First, fix $x\in \R^3$. Let $\tilde{\gamma}\in C^{\infty}_c(\R^3)$ be a bump function with $\chi_{B_{\rho/2}(x)}\leq \tilde{\gamma} \leq \chi_{B_\rho(x)}$, $\norm{D\tilde{\gamma}}{\infty}\leq \frac{C}{\rho}$ and $\norm{D^2\tilde{\gamma}}{\infty}\leq \frac{C}{\rho^2}$. Therefore, $\gamma\defeq\tilde{\gamma}\circ f$ is as in \eqref{eq:gamma} with $\Lambda=\frac{C}{\rho}$, and thus by integrating \Cref{prop:NewLemma19} from $0$ to $\tau$ we find
	\begin{align}
		& \left.\int_{B_{\rho/2}(x)}\abs{A}^2\diff \mu\right\vert_{t=\tau} +  c_0 \int_0^\tau \int_{B_{\rho/2}(x)}\left(\abs{\nabla^2 A}^2+ \abs{A}^2\abs{\nabla A}^2+\abs{A}^6\right) \diff \mu \diff t \\
		&\qquad \leq \int_{B_{\rho}(x)} \abs{A_0}^2\diff \mu_0 + \frac{C}{\rho^4}  \int_0^\tau\int_{B_{\rho}(x)} \abs{A}^2\diff \mu \diff t+ C \int_0^\tau\abs{\lambda}^{\frac{4}{3}}  \int_{B_{\rho}(x)} \abs{A}^2\diff \mu\diff t.\label{eq:EstimateGlobalLocal1}
	\end{align}
	It is possible to find $(x_\ell)_{\ell\in\N}\subset \R^3$ with $\R^3= \bigcup_{\ell\in\N}B_{\rho/2}(x_\ell)$ such that each point $y\in \R^3$ is contained in at most $M$ of the balls $B_{\rho}(x_\ell)$, where $M>0$ is a universal constant, in particular independent of $\rho>0$. Therefore, choosing $x=x_\ell$ in \eqref{eq:EstimateGlobalLocal1} and summing over $\ell\in \N$ we find
	\begin{align}
		\begin{split}
		\int_0^\tau \int\abs{A}^6\diff \mu\diff t &\leq \sum_{\ell} \int_0^\tau\int_{B_{\rho/2}(x_\ell)}\abs{A}^{6}\diff \mu\diff t  \\
		&\leq M \int\abs{A_0}^2\diff \mu_0 + \frac{CM}{\rho^{4}}\int_0^\tau \int|A|^2\diff \mu\diff t + CM \int_0^\tau \abs{\lambda}^{\frac{4}{3}}\int \abs{A}^2\diff \mu\diff t
		\end{split}
	\end{align}
	Now, by \eqref{eq:A^2GaussBonnet} we have $\int\abs{A}^2\diff \mu\leq C(K, \chi(\Sigma))$ and hence
	\begin{align}\label{eq:GlobalA^6Estimate}
		\int_0^{\tau}\int \abs{A}^{6}\diff \mu\diff t \leq C\left(K,\chi(\Sigma)\right) \left(1+\frac{\tau}{\rho^4}+ \int_0^\tau \abs{\lambda}^{\frac{4}{3}}\diff t\right).
	\end{align}
	Thus, using \eqref{eq:AA0H}, H\"older's inequality and \Cref{lem:control rho via V} we find from \eqref{eq:deflambda}
	\begin{align}
		\int_0^\tau \abs{\lambda}^{\frac{4}{3}}\diff t &\leq C\int_0^\tau \CalA(f_t)^{-\frac{4}{3}} \left( {\int \abs{A}^3\diff \mu}\right)^{\frac{4}{3}}\diff t \leq C \int_0^{\tau} \mathcal{A}(f_t)^{-1} \left(\int \abs{A}^4\diff \mu\right)\diff t \\
		& \leq C \rho^{-2} \left(\int_0^{\tau} \int\abs{A}^6\diff \mu\diff t\right)^{\frac{1}{2}} \left(\int_0^{\tau} \int \abs{A}^2\diff \mu \diff t\right)^{\frac{1}{2}}.
	\end{align}
%
%
Therefore, using \eqref{eq:A^2GaussBonnet}, the energy decay \eqref{eq:dtW<=0}, \eqref{eq:GlobalA^6Estimate} and Young's inequality, we find 
	\begin{align}
		\int_0^{\tau}\abs{\lambda}^{\frac{4}{3}}\diff t &\leq C(K,\chi(\Sigma)) \frac{\tau^{\frac{1}{2}}}{\rho^2} \left(1+ \frac{\tau^{\frac{1}{2}}}{\rho^2}+ \left(\int_0^{\tau}\abs{\lambda}^{\frac{4}{3}}\diff t\right)^{\frac{1}{2}}\right) \\
		&\leq C(K, \chi(\Sigma)) \left( \frac{\tau^{\frac{1}{2}}}{\rho^2} + \frac{\tau}{\rho^4} \right) + \frac{1}{2} \int_0^{\tau}\abs{\lambda}^{\frac{4}{3}}\diff t. &&\qedhere
	\end{align}
\end{proof}

\subsection{An \texorpdfstring{$L^2$}{L^2}-type estimate}\label{subsec:L^2estimate}
In this section, we prove an $L^2(0,T)$-type estimate for $\lambda$, which will be crucial in the analysis of the blow-ups in \Cref{sec:BlowUp}. \add{Since we rely on a \emph{reverse isoperimetric inequality} \cite{blatt2020reverse}, this is the first instance where we require the Willmore energy to be below $8\pi$.}

\delete{Since we can control the volume but not necessarily the area along the flow, the following \emph{reverse isoperimetric inequality} is a key ingredient. Note that while the case of spherical surfaces already follows from \cite[Theorem 1]{Schygulla}, a very recent generalization shows that the assumption on the topology is not necessary and also provides a quantitative estimate, cf \cite[Theorem 1.1]{blatt2020reverse}.

\begin{thm}[Reverse Isoperimetric Inequality]\label{thm:ReverseIsoperimetric}
	For each $\delta>0$ there exists a constant $C(\delta)>0$ such that if $f\colon \Sigma\to\R^3$ is an embedding with $\CalW(f)\leq 8\pi-\delta$ we have
	\begin{align}
		\CalA(f)^{\frac{1}{2}} \leq C(\delta) \abs{\CalV(f)}^{\frac{1}{3}}.
	\end{align}
\end{thm}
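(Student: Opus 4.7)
The plan is a proof by contradiction through a compactness argument, exploiting the strict Willmore gap $8\pi-\CalW(f)\geq\delta$.

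First, observe that the ratio $\CalA(f)/\abs{\CalV(f)}^{2/3}$ is invariant under dilations of $\R^3$ (since $\CalA$ scales as length squared and $\abs{\CalV}$ as length cubed), while $\CalW$ is scale invariant. After rescaling we may therefore assume $\CalA(f)=1$; the claim reduces to a uniform lower bound $\abs{\CalV(f)}\geq c(\delta)>0$ among embeddings with $\CalW(f)\leq 8\pi-\delta$ and $\CalA(f)=1$.

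Suppose no such bound exists: there is a sequence of embeddings $f_n\colon\Sigma_n\to\R^3$ with $\CalW(f_n)\leq 8\pi-\delta$, $\CalA(f_n)=1$, and $\CalV(f_n)\to 0$. Simon's diameter estimate $\diam f_n(\Sigma_n)^2\leq C\CalA(f_n)\CalW(f_n)$, applied after translating each image to contain the origin, confines all $f_n(\Sigma_n)$ in a common closed ball $\overline{B_R(0)}$. Simon's monotonicity formula combined with \eqref{eq:LiYau} (which under $\CalW<8\pi$ forces multiplicity one at every point) yields uniform local area ratio bounds away from $0$ and $\infty$.

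These uniform bounds allow me to extract a subsequential integral varifold limit $V_\infty$, supported in $\overline{B_R(0)}$, with $\CalA(V_\infty)\leq 1$ and $\CalW(V_\infty)\leq 8\pi-\delta$ by lower semicontinuity. Using standard regularity and compactness for Willmore surfaces with energy strictly below $8\pi$ (absence of branch points, smooth subconvergence away from a discrete set of concentration points, possibly after pre-composing with a Möbius transformation of $\R^3$ to resolve a conformal degeneration), $V_\infty$ is the image of a smooth embedded closed surface. Such a surface encloses a region of strictly positive Lebesgue measure, so $\abs{\CalV(V_\infty)}>0$. Continuity of $\CalV$ along the (possibly Möbius-normalized) smooth convergence then contradicts $\CalV(f_n)\to 0$.

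The main obstacle is controlling the possible loss of compactness, which could a priori occur in two forms: energy concentration into a bubble, or escape of part of the surface to infinity via a conformal degeneration. For the first, a Willmore sphere bubble carries at least $4\pi$ of Willmore energy and the residual surface must also satisfy $\CalW\geq 4\pi$, so bubbling saturates the budget $8\pi$ and contradicts the strict gap $\delta>0$. For the second, the Möbius invariance of $\CalW$ allows one to renormalize the sequence by a Möbius transformation so that the images remain in a fixed bounded region while preserving the Willmore bound; the volume functional $\CalV$ is not Möbius invariant, but this is harmless because one only needs continuity along the normalized sequence. The strict gap $\delta$ is precisely what rules out both pathologies, and correspondingly one must expect $C(\delta)\to\infty$ as $\delta\to 0$, since for $\CalW(f)=8\pi$ thin degenerating tori can have arbitrarily large area-to-volume ratio.
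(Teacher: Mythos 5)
You should first be aware that the paper does not prove this theorem at all: it is imported from the literature, with the spherical case attributed to Schygulla and the general quantitative statement to Blatt's reverse isoperimetric inequality. So there is no internal proof to match; your proposal has to stand on its own. Its broad strategy --- scale to $\CalA(f)=1$, argue by contradiction via compactness, and use the strict gap $\CalW\leq 8\pi-\delta$ together with Li--Yau to forbid collapse --- is indeed the right spirit and is close to how the cited results are actually established. However, as written it has genuine gaps.

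The central gap is the step ``standard regularity and compactness for Willmore surfaces with energy strictly below $8\pi$ \dots\ $V_\infty$ is the image of a smooth embedded closed surface.'' The $f_n$ are arbitrary embeddings with bounded Willmore energy, not critical points, so no regularity theory for Willmore surfaces applies to them or to their limit. What Allard compactness gives (using the bound $\int\abs{H}\,\diff\mu\leq\CalW^{1/2}\CalA^{1/2}$) is only an integral $2$-varifold $V_\infty$ with generalized mean curvature in $L^2$; Li--Yau forces unit density, but a unit-density integral varifold with $L^2$ mean curvature need not be a smooth closed surface, and without smoothness and multiplicity-one graphical convergence neither ``$\abs{\CalV(V_\infty)}>0$'' nor ``continuity of $\CalV$'' follows. (The continuity of the enclosed volume itself needs an argument, e.g.\ BV compactness of $\chi_{\Omega_n}$; the dangerous scenario, a collapsing pancake converging to a doubled disk with $\abs{\Omega_n}\to 0$, is excluded precisely by the density-$2$ point forcing $\liminf\CalW\geq 8\pi$, not by smoothness of the limit.) Relatedly, your bubbling dichotomy is phrased in terms of Willmore-sphere bubbles, which is the mechanism for Palais--Smale or critical sequences; for arbitrary competitors the correct tool is again the Li--Yau density bound on the limit varifold.

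The second gap is the Möbius renormalization. The quantity $\CalA(f)^{1/2}/\abs{\CalV(f)}^{1/3}$ is invariant under similarities but not under general Möbius transformations of $\R^3$, which can change it arbitrarily. If you replace $f_n$ by $\Phi_n\circ f_n$ for inversions $\Phi_n$, the hypothesis $\CalV(f_n)\to 0$ tells you nothing about $\CalV(\Phi_n\circ f_n)$, so a positive lower bound on the volume of the renormalized limit produces no contradiction; calling this ``harmless'' is not justified. Either you must show that, after normalizing the area and translating (the diameter bound and the monotonicity formula already confine the images and give two-sided local area-ratio bounds), no target-space Möbius renormalization is needed, or you must work measure-theoretically throughout and never leave the original sequence. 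A minor further point: since $C(\delta)$ is claimed uniformly over all topologies, a contradiction sequence may have varying genus, and one needs that $\CalW\leq 8\pi-\delta$ bounds the genus (true, since the infimal Willmore energies $\beta_g$ increase to $8\pi$), which your argument does not address.
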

\begin{remark}
	By the Li--Yau inequality \eqref{eq:LiYau}, it is no restriction to require $f$ to be an embedding in \Cref{thm:ReverseIsoperimetric}. Moreover, we observe that in order to prove our main result, \Cref{thm:conv main}, we only need the result in \cite{Schygulla} for $\Sigma=\S^2$.
\end{remark}

As a next step, we recall the following essential diameter estimate due to Simon, see \cite[Lemma 1]{ToppingDiamter} for the explicit constant.
\begin{prop}[{\cite[Lemma 1.1]{SimonWillmore}}]\label{prop:ToppingDiameter}
	If $f\colon\Sigma\to\R^3$ is an immersion, then we have
	\begin{align}
		\diam(f(\Sigma)) \leq \frac{2}{\pi} \CalA(f)^{\frac{1}{2}}\CalW(f)^{\frac{1}{2}}.
	\end{align}
\end{prop}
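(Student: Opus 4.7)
The statement is the classical diameter bound originally due to Simon \cite{SimonWillmore}, with the explicit constant $\tfrac{2}{\pi}$ extracted by Topping \cite{ToppingDiameter}. The plan is to derive it from Simon's monotonicity formula for immersed surfaces in $\R^3$ combined with the Cauchy--Schwarz inequality, bridging the $L^1$-bound on $\abs{H}$ that arises naturally from monotonicity with the $L^2$-bound provided by the Willmore energy.

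The key ingredient is that, for any $p \in f(\Sigma)$ and any $\rho > 0$, Simon's monotonicity formula together with the lower bound $\lim_{\sigma \to 0} \sigma^{-2}\mu(f^{-1}(B_\sigma(p))) \geq \pi$ coming from the Euclidean $2$-density at an immersed point yields an inequality of the form
\begin{align*}
\pi \leq \frac{\mu(f^{-1}(B_\rho(p)))}{\rho^2} + \frac{1}{2}\int_0^\rho \tau^{-2}\int_{B_\tau(p)} \abs{H}\diff\mu\diff\tau.
\end{align*}
From this I would first establish the stronger intermediate estimate $\diam(f(\Sigma)) \leq \frac{1}{\pi}\int \abs{H}\diff\mu$. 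Take $p, q \in f(\Sigma)$ realizing the diameter $D = \diam(f(\Sigma))$; by continuity of $\abs{\cdot - p}$ on the connected set $f(\Sigma)$ and the intermediate value theorem, for every $t \in [0, D]$ one can find $x_t \in f(\Sigma)$ with $\abs{x_t - p} = t$. Substituting $x_t$ in place of $p$ in the monotonicity estimate, choosing $\rho$ small enough that the area term is controlled, and integrating the resulting inequality in $t$ from $0$ to $D$ produces the desired $L^1$-diameter bound after swapping the order of integration.

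Finally, Cauchy--Schwarz gives
\begin{align*}
\int \abs{H}\diff\mu \leq \CalA(f)^{\frac{1}{2}}\left(\int H^2\diff\mu\right)^{\frac{1}{2}} = 2\,\CalA(f)^{\frac{1}{2}}\CalW(f)^{\frac{1}{2}},
\end{align*}
using the normalization \eqref{eq:defWillmore}, and combining this with the previous step yields the claim with the asserted constant $\frac{2}{\pi}$.

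The main obstacle is the chaining step: in the immersed (as opposed to embedded) setting, one has to be careful about what ``a point at distance $t$ from $p$ on the surface'' means, and to avoid double-counting multiplicities. Tracking the sharp constant $\frac{2}{\pi}$ rather than just some universal $C$ also requires a careful bookkeeping of the constants in the monotonicity formula. Since both issues are handled cleanly in \cite{SimonWillmore, ToppingDiameter}, I would cite these references for the fine details rather than reprove them here.
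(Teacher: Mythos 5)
The paper offers no proof of this proposition: it is stated purely as a citation of Simon's Lemma 1.1, with Topping's Lemma 1 invoked for the explicit constant, so your citation-plus-sketch matches the paper's treatment. Your constant bookkeeping also checks out: in the paper's convention $H=g^{ij}A_{ij}$, Topping's $L^1$ bound reads $\diam(f(\Sigma))\leq \frac{1}{\pi}\int\abs{H}\diff\mu$, and Cauchy--Schwarz together with $\int H^2\diff\mu = 4\CalW(f)$ gives exactly $\frac{2}{\pi}\CalA(f)^{\frac12}\CalW(f)^{\frac12}$. The one caveat is that the monotonicity-plus-chaining argument you outline is Simon's and, taken literally, only yields the inequality with an unspecified universal constant (the chained balls must be taken disjoint, which loses a factor); the sharp constant genuinely requires Topping's finer covering argument, which you correctly defer to the cited reference.
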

}

As a \add{first} step, we want to relate the diameter to the Lagrange multiplier. To that end, we use the different scaling of $\overline{\CalW}$ and $\CalV$ to obtain a different representation of  $\lambda$, cf. \cite[pp. 1236 -- 1237]{DKS} \add{and also \cite[Proof of Theorem 1.4]{MWHelfrich}.}
\begin{lem}\label{lem:lambdascaling} Let $f\colon[0,T)\times \Sigma\to\R^3$ be a volume-preserving Willmore flow. Then for all $t\in [0,T)$ and any $p\in \R^3$ we have 
	\begin{align}
		3\lambda \CalV(f_0)= -\int \langle \partial_t f, f\rangle\diff \mu = -\int \langle \partial_t f, f-p\rangle\diff \mu.
	\end{align}
\end{lem}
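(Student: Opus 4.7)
The plan is to exploit the contrasting scaling behaviors of $\overline{\CalW}$ (scale-invariant) and $\CalV$ (degree-3 homogeneous), combined with translation invariance of both functionals, in exactly the spirit of the proof of \Cref{lem:constr Willmore}. Throughout, I will use that the first-variation formulas \eqref{eq:1 Vari Vol} and \eqref{eq:1 Vari Willmore} remain valid for general (not necessarily normal) variations $\varphi\in C^\infty(\Sigma;\R^3)$, as invoked in that proof.

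For the first equality, I would substitute the evolution equation $\partial_t f = -\nabla_{sc}\overline{\CalW}(f)\nu + \lambda\nu$ from \eqref{eq:VpWF}, \eqref{eq:nabla W sc} and split
\begin{align}
-\int \langle \partial_t f, f\rangle\diff \mu = \int \nabla_{sc}\overline{\CalW}(f)\,\langle \nu, f\rangle\diff \mu \;-\; \lambda \int \langle \nu, f\rangle\diff \mu.
\end{align}
The second term equals $3\lambda\,\CalV(f)$ by the definition \eqref{eq:defVol}, and volume preservation \eqref{eq:dtV=0} replaces $\CalV(f)$ by $\CalV(f_0)$. The first term is precisely $\overline{\CalW}'(f)f$, which vanishes since $\overline{\CalW}(\rho f) = \overline{\CalW}(f)$ for all $\rho>0$ (indeed $\nu$ and $A^0$ are scale-invariant while $\diff\mu$ picks up a factor $\rho^2$ and $H$ a factor $\rho^{-1}$, so $H^2\diff\mu$ is scale-invariant); differentiating at $\rho=1$ gives $\dtzero \overline{\CalW}((1+s)f)=0$, exactly as in the proof of \Cref{lem:constr Willmore}.

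For the second equality, it suffices to show that $\int \partial_t f\diff \mu = 0$ in $\R^3$, so that $\int \langle \partial_t f, p\rangle\diff\mu=0$ for every $p\in\R^3$. Writing $\int \partial_t f\diff \mu = -\int \nabla_{sc}\overline{\CalW}(f)\,\nu\diff \mu + \lambda\int \nu\diff \mu$, I would handle each piece by translation invariance. Applying \eqref{eq:1 Vari Vol} with the constant variation $\varphi\equiv p$ and using $\CalV(f+sp)=\CalV(f)$ yields $\langle p, \int \nu\diff\mu\rangle=0$ for every $p$, hence $\int \nu\diff\mu=0$. Analogously, $\overline{\CalW}(f+sp)=\overline{\CalW}(f)$ together with \eqref{eq:1 Vari Willmore} gives $\langle p, \int \nabla_{sc}\overline{\CalW}(f)\,\nu\diff\mu\rangle=0$ for every $p$, so the full vector integral vanishes.

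I do not expect any real obstacles: the entire argument is a direct consequence of the invariance properties of the two functionals, together with the flow equation and the conservation law \eqref{eq:dtV=0}. The only minor point to justify carefully is the use of \eqref{eq:1 Vari Vol} and \eqref{eq:1 Vari Willmore} for non-normal variations, which has already been addressed in the paper.
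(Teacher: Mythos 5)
Your argument is correct, and it rests on the same underlying idea as the paper's proof — differentiating the scale invariance of $\overline{\CalW}$ against the cubic homogeneity of $\CalV$ — but you package the point $p$ differently. The paper considers the dilation $h_\alpha = p + \alpha(f_t-p)$ about $p$ itself, so that $\overline{\CalW}(h_\alpha)=\overline{\CalW}(f_t)$ and $\CalV(h_\alpha)=\alpha^3\CalV(f_0)$; differentiating at $\alpha=1$ and using the flow equation yields $3\lambda\CalV(f_0)=-\int\langle\partial_t f, f-p\rangle\diff\mu$ for every $p$ in one stroke, with $p=0$ giving the first identity. You instead prove the $p=0$ case by scaling about the origin (exactly as in the proof of \Cref{lem:constr Willmore}) and then separately show $\int\partial_t f\diff\mu=0$ from translation invariance of both functionals. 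Your decomposition makes the two conservation laws (dilation and translation) explicit, at the cost of one extra step; the paper's version is marginally more economical. One small point to be aware of: your claim $\CalV(f+sp)=\CalV(f)$ is, from the defining formula \eqref{eq:defVol}, literally equivalent to $\int\nu\diff\mu=0$, so as written that step is circular unless you justify translation invariance of the signed volume independently — e.g.\ via $\langle\nu,e\rangle\diff\mu = f^{*}\bigl(\iota_e(\diff x^1\wedge\diff x^2\wedge\diff x^3)\bigr)$ being exact and Stokes' theorem on the closed surface $\Sigma$. (The paper's proof tacitly relies on the same standard fact when asserting $\CalV(h_\alpha)=\alpha^3\CalV(f_0)$ for $p\neq 0$, so this is a remark rather than a defect.)
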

\begin{proof}
	Fix $t\in [0,T)$. For $\alpha>0$, consider the immersion $h_\alpha \defeq p+\alpha(f_t-p)\colon\Sigma\to\R^3$. We then have $\overline{\CalW}(h_\alpha)=\overline{\CalW}(f_t)$, whereas $\CalV(h_{\alpha}) = \alpha^3\CalV(f_0)$. Thus, we find
	\begin{align}
		\left.\frac{\diff}{\diff \alpha}\right\vert_{\alpha=1}\big(\overline{\CalW}(h_{\alpha})+ \lambda(t) \CalV(h_\alpha)\big) = 0 + 3\lambda(t) \CalV(f_0),
	\end{align}
	whereas by the definition of $L^2(\diff \mu_f)$-gradients we have
	\begin{align}
		\left.\frac{\diff}{\diff \alpha}\right\vert_{\alpha=1}\big(\overline{\CalW}(h_{\alpha})+ \lambda(t) \CalV(h_\alpha)\big) = \left.\int\langle \nabla\overline{\CalW}(f)+\lambda\nabla\CalV(f), f-p\rangle\diff \mu\right\vert_{t}.
	\end{align}
	Therefore, by \eqref{eq:VpWF} and \Cref{lem:first Vari V and W} we have the identity
	\begin{align}
		3 \lambda \CalV(f_0) = -\int \langle \partial_t f, f-p\rangle\diff \mu \quad\text{on }[0,T).
	\end{align}
	Picking $p=0$ yields the first equality.
\end{proof}

This finally enables us to prove the desired $L^2$-estimate.
\begin{prop}\label{prop:Lambda^2AVpWF}
	Let $f\colon[0,T)\times \Sigma \to\R^3$ be a volume-preserving Willmore flow with ${\CalW}(f_0)\leq 8\pi-\delta$ for $\delta>0$. Then, for all $0\leq t <T$ we have
	\begin{align}
		\int_{0}^t\lambda^2(\tau)\CalA(f_\tau)\diff \tau\leq C(\delta)\CalW(f_0)^2.
	\end{align} 
\end{prop}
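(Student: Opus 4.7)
The plan is to combine the representation of $\lambda$ from \Cref{lem:lambdascaling} with Topping's diameter bound and the Reverse Isoperimetric Inequality, then integrate against the energy identity. Since $\CalW$ is a Lyapunov function (\Cref{rem:W strict Lyapunov}), we have $\CalW(f_t)\leq\CalW(f_0)\leq 8\pi-\delta$ for all $t\in[0,T)$, so by the Li--Yau inequality \eqref{eq:LiYau} each $f_t$ is an embedding. In particular $\CalV(f_t)=\CalV(f_0)\neq 0$ (it is the enclosed volume), and \Cref{thm:ReverseIsoperimetric} applies at every time.

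The first step is to exploit \Cref{lem:lambdascaling}: choosing $p\in f_t(\Sigma)$ and applying Cauchy--Schwarz yields
\begin{align}
9\lambda^2(t)\CalV(f_0)^2 \leq \int\abs{\partial_t f}^2\diff\mu\cdot \int\abs{f-p}^2\diff\mu \leq \int\abs{\partial_t f}^2\diff\mu\cdot\diam(f_t(\Sigma))^2\CalA(f_t).
\end{align}
Next, I would feed this into Topping's estimate (\Cref{prop:ToppingDiameter}) and use $\CalW(f_t)\leq\CalW(f_0)$ to get $\diam(f_t(\Sigma))^2\leq C\CalA(f_t)\CalW(f_0)$. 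Rearranging,
\begin{align}
\lambda^2(t)\CalV(f_0)^2 \leq C\CalW(f_0)\,\CalA(f_t)^2\int\abs{\partial_t f}^2\diff\mu.
\end{align}

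The crucial step is to eliminate the (potentially small) volume factor $\CalV(f_0)^2$ on the left, which is exactly what \Cref{thm:ReverseIsoperimetric} provides: multiplying the previous inequality by $\CalA(f_t)/\CalV(f_0)^2$ and inserting $\CalA(f_t)^3\leq C(\delta)\abs{\CalV(f_t)}^2=C(\delta)\CalV(f_0)^2$ makes the volume cancel, leaving
\begin{align}
\lambda^2(t)\CalA(f_t)\leq C(\delta)\CalW(f_0)\int\abs{\partial_t f}^2\diff\mu.
\end{align}
Finally, I integrate in $\tau\in[0,t]$ and use the energy identity \eqref{eq:dtW<=0} together with $\overline{\CalW}=2\CalW-4\pi\chi(\Sigma)$ and $\CalW(f_t)\geq 4\pi$:
\begin{align}
\int_0^t\!\!\int\abs{\partial_\tau f}^2\diff\mu\diff\tau = \overline{\CalW}(f_0)-\overline{\CalW}(f_t) = 2\bigl(\CalW(f_0)-\CalW(f_t)\bigr)\leq 2\CalW(f_0)\leq \tfrac{1}{2\pi}\CalW(f_0)^2,
\end{align}
so the Euler characteristic drops out entirely and the desired bound $C(\delta)\CalW(f_0)^2$ follows.

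The substantive step is the second one: extracting $\lambda^2\CalA$ on the left hand side in a form that can be integrated requires coupling the reverse isoperimetric bound $\CalA^{3/2}\lesssim\abs{\CalV}$ precisely with the $\CalV(f_0)^2/\CalA(f_t)^3$ factor produced by \Cref{lem:lambdascaling} and Topping's estimate. Everything else is either Cauchy--Schwarz or the standard energy dissipation identity. The need to have the exact exponent $2$ on $\lambda$ (rather than $4/3$ as in \Cref{prop:lambda4/3Nonconcentration}) is precisely why we must pay with the assumption $\CalW(f_0)<8\pi$, which both supplies embeddedness and unlocks \Cref{thm:ReverseIsoperimetric}.
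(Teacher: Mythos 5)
Your proof is correct and follows essentially the same route as the paper: \Cref{lem:lambdascaling} plus Cauchy--Schwarz, then \Cref{prop:ToppingDiameter} and \Cref{thm:ReverseIsoperimetric} to cancel the volume factor, and finally the energy dissipation identity \eqref{eq:dtW<=0}. The only (cosmetic) difference is in the last step, where you bound $\CalW(f_\tau)\leq\CalW(f_0)$ before integrating, while the paper keeps $\CalW(f_\tau)$ inside and integrates $-\partial_\tau(\CalW(f_\tau))^2$ directly; both yield the stated bound $C(\delta)\CalW(f_0)^2$.
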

\delete{Again, by \Cref{rem:ParabolicScaling} and the invariance of the Willmore, the above estimate is preserved under parabolic rescaling.}
\begin{proof}
	{Observe that by \eqref{eq:dtV=0}, we have $\abs{\CalV(f)}=\abs{\CalV(f_0)}$.
	Picking some $p(t)\in f_t(\Sigma)$ for each $t$, we find by \Cref{lem:lambdascaling} and Cauchy--Schwarz
	\begin{align}
		\abs{\lambda(t)}\leq \frac{1}{3 \abs{\CalV(f_0)}} \int \abs{\partial_t f_t}\diff \mu_t \diam f_t(\Sigma) \leq \frac{\CalA(f_t)^{\frac{1}{2}}}{3  \abs{\CalV(f_0)}} \left(\int \abs{\partial_t f_t}^2\diff \mu_t\right)^{\frac{1}{2}} \diam f_t(\Sigma).
	\end{align}
	Squaring this inequality, \add{by Simon's diameter estimate \cite[Lemma 1.1]{SimonWillmore}} we conclude
	\begin{align}
		\lambda^2(t)\CalA(f_t) &\leq \frac{C}{\abs{\CalV(f_0)}^2} \CalA(f_t)^3 \CalW(f_t)\int \abs{\partial_t f_t}^2\diff \mu_t.
	\end{align}
	Now, by \add{the reverse isoperimetric inequality \cite[Theorem 1.1]{blatt2020reverse} (see also \cite[Theorem 1]{Schygulla} for the spherical case),} and the assumption on the initial energy, we have 
	\begin{align}
		\lambda^2(t)\CalA(f_t) &\leq {C(\delta)}\CalW(f_t)\int \abs{\partial_t f_t}^2\diff \mu_t.
	\end{align}
}
	Integrating in time and using \eqref{eq:dtW<=0} and  \eqref{eq:WvstildeW} we conclude
	\begin{align}
		&\int_{0}^\tau \lambda^2(t) \CalA(f_t)\diff t \leq C(\delta)\int_0^\tau \CalW(f_t)\left(- \int \langle \nabla\overline{\CalW}(f_t), \partial_t f_t\rangle \diff \mu_t\right) \diff  t \\
		&\qquad =  -C(\delta) \int_{0}^\tau \CalW(f_t) \partial_t \overline{\CalW}(f_t)\diff t  =C(\delta) \int_{0}^{\tau}- \partial_t\left(\CalW(f_t)\right)^2  \diff t \leq C(\delta) \CalW(f_0)^2.
	\end{align}
	Renaming $\tau$ into $t$ yields the claim.
\end{proof}

\section{Proof of the lifespan theorem}\label{sec:lifespan}
In this section, we will prove \Cref{thm:Lifespan}, which yields a lower bound on the maximal existence time of the volume-preserving Willmore flow. This will be crucial for the construction of the blow-up in \Cref{sec:BlowUp}.

\delete{Note that in contrast to the lifespan estimate in \cite{McCoyWheelerWilliams} for the surface diffusion flow, }
Here we only work with the integrability of the constraint parameter $\lambda$ which we proved in \Cref{sec:Int est Lambda} \add{and do not require strong $L^\infty$-type bounds as in \cite[(A1)]{McCoyWheelerWilliams}, \cite[(7)]{MR2735555}.}

\add{
\begin{proof}[Proof of \Cref{thm:Lifespan}]
	This can now be achieved in the same fashion as \cite[Theorem 1.2]{KSGF}, so we focus on the differences arising from the Lagrange multiplier. Without loss of generality, $\rho=1$, cf.\ \Cref{rem:ParabolicScaling}. If $\Gamma>1$ denotes the number of radius $\frac{1}{2}$ balls necessary to cover $B_1(0)\subset \R^3$, we set $\bar{\varepsilon}\defeq \frac{\varepsilon_2}{3\Gamma}$ with $\varepsilon_2>0$ as in \Cref{prop:lambda4/3Nonconcentration}. We observe
	\begin{align}\label{eq:GammaEstimate1/2}
		\varepsilon(t)\defeq \sup_{x\in \R^3} \int_{B_1(x)} \abs{A}^2\diff \mu \leq \Gamma \cdot \sup_{x\in \R^3} \int_{B_{1/2}(x)} \abs{A}^2\diff \mu,
	\end{align}
	and, for a parameter $0<\beta< 1$, to be specified below, define
	\begin{align}\label{eq:deft_0}
		t_0 \defeq \sup\left\{ 0\leq t \leq \min\{T, \beta\} \mid \varepsilon(\tau)\leq 3\Gamma\varepsilon\text{ for all }0\leq \tau <t\right\}>0.
	\end{align}
	Picking an appropriate test function in \Cref{prop:New30}, we obtain
	\begin{align}\label{eq:star}
		\int_{B_{1/2}(x)} \abs{A}^2\diff \mu \leq \int_{B_1(x)}\abs{A_0}^2\diff \mu_0 + 3c \Gamma\Lambda^4 \varepsilon t + 3c \Gamma\varepsilon  \int_{0}^t\abs{\lambda}^{\frac{4}{3}}(\tau)\diff \tau
	\end{align}
	for all $0\leq t<t_0$ where $c, \Lambda\in (0, \infty)$ are universal constants. By the choice of $\bar{\varepsilon}$ and \Cref{prop:lambda4/3Nonconcentration}, the integral $\int_0^t \abs{\lambda}^{\frac{4}{3}}\diff \tau$ grows less than linearly in $t\in [0,t_0)$. Hence, by a suitable application of Young's inequality, we find
\begin{align}
	\int_{B_{1/2}(x)} \abs{A}^2\diff \mu &\leq \int_{B_1(x)}\abs{A_0}^2\diff \mu_0 + 3c \Gamma\Lambda^4\varepsilon t + \frac{\varepsilon}{2} + C(K, \chi(\Sigma), c, \Gamma) t\varepsilon  \\
	&\leq \int_{B_1(x)}\abs{A_0}^2\diff \mu_0 + \frac{\varepsilon}{2} + C(K, \chi(\Sigma),c,\Gamma,\Lambda) t\varepsilon.\label{eq:1/2CurvatureConcentrationEstimate}
\end{align}
 If we choose $\beta^{-1}\defeq 2 C(K, \chi(\Sigma),c,\Gamma,\Lambda)$, the assumption $t_0<\min\{T, \beta\}$ contradicts maximality of $t_0$ in \eqref{eq:deft_0}. Consequently, $t_0= \min \{T, \beta\}$ has to hold.  
 If $t_0=\beta$, we find $T\geq \beta$. In this case, \eqref{eq:LifeSpanAEstimate} then follows from \eqref{eq:GammaEstimate1/2}, \eqref{eq:1/2CurvatureConcentrationEstimate} and the definition of $\beta$.
 
 Assume $t_0 = T \leq \beta$. Then from \eqref{eq:1/2CurvatureConcentrationEstimate} we find $\int_{B_{1/2}(x)} \abs{A}^2\diff \mu \leq 2\varepsilon$, hence by \eqref{eq:GammaEstimate1/2}, we have
 \begin{align}\label{eq:estimate A 1.7}
 	\varepsilon(t)\leq 2\Gamma\varepsilon<\varepsilon_0 \quad \text{for all }0\leq t <t_0.
 \end{align}
 Now, $T\leq \beta$ by assumption and $L=\int_0^{T}\abs{\lambda}^{\frac{4}{3}}\diff t\leq C(K, \chi(\Sigma))$ by \Cref{prop:lambda4/3Nonconcentration}. We may use \Cref{thm:HigherOrderEstimatesTLocalized} and argue exactly as in \cite[Theorem 1.2]{KSGF} to prove $f(t)\to f(T)$ smoothly as $t\nearrow T$, which enables us to smoothly extend the flow past $T$. Taking $\hat{c}\in (0,\beta)\subset (0,1)$ small enough, \eqref{eq:estimate A 1.7} guarantees that \eqref{eq:LifeSpanAEstimate} is satisfied.
\end{proof}
}

\section{Construction of the blow-up}\label{sec:BlowUp}
In this section, we will rescale a volume-preserving Willmore flow as we approach the maximal existence time to obtain a blow-up limit,  combining the approaches in \cite[Section 4]{KSSI} and \cite[pp. 348 -- 349]{KSRemovability}. As we shall see, if the Lagrange multiplier has a certain integrability in time, then the limit is not only stationary, but even an \emph{unconstrained} Willmore immersion.
\begin{defi}\label{def:curvature concentration function}
For a smooth family of immersions $f \colon [0,T)\times \Sigma\to\R^3$, $t\in [0,T), r>0$, we define the \emph{curvature concentration function}
\begin{align}
\kappa(t,r)\defeq \sup_{x\in \R^3} \int_{B_r(x)}	|A_t|^2\diff \mu_t.
\end{align}
\end{defi}
\delete{For $t\in [0,T)$ and $\rho>0$, like in \cite[p. 348]{KSRemovability} we have the estimates
\begin{align}\label{eq:kappa continuity}
\lim_{r\nearrow\rho}\kappa(t, r) = \kappa(t, \rho)\leq 	\liminf_{r\searrow \rho}\kappa(t,r) \leq \limsup_{r\searrow \rho}\kappa(t,r)\leq \Gamma \kappa(t, \rho),
\end{align}
where $\Gamma>1$ is as in \eqref{eq:GammaEstimate1/2} by a covering argument.}

\begin{thm}\label{thm:BlowUpExistence}
Let $f\colon[0,T)\times \Sigma\to\R^3$ be a maximal volume-preserving Willmore flow with initial energy ${\CalW}(f_0)\leq K$. Let $(t_j)_{j\in \N}\subset [0,T), t_j \nearrow T, (r_j)_{j\in \N}\subset(0, \infty)$, $(x_j)_{j\in \N}\subset \R^3$ such that
\begin{align}\label{eq:blowup assumption}
	\kappa(t_j, r_j)&\leq \varepsilon_3 \defeq \bar{\varepsilon} \hat{c} \quad \text{ for all }j\in\N,
\end{align} 
where $\bar{\varepsilon}>0$ and $\hat{c}=\hat{c}(K, \chi(\Sigma))  \in (0, 1)$ are as in \Cref{thm:Lifespan}. Then we find
\begin{align}\label{eq:t_j+cr_j<T}
t_j+r_j^4\hat{c}<T,
\end{align}
and after passing to a subsequence,  the rescaled and translated immersions
\begin{align}
&\hat{f}_j\defeq r_j^{-1}\left(f(t_j+r_j^4\hat{c},\cdot)-x_j\right)
\end{align}
converge as $j\to\infty$ smoothly on compact subsets of $\R^3$, after reparametrization, to a proper constrained Willmore immersion $\hat{f}\colon\hat{\Sigma}\to\R^3$ of \eqref{eq:stationary}  with $\CalW(\hat{f})\leq K$.

Moreover, if $\int_0^T \lambda(t)^2\CalA(f_t)\diff t<\infty$,  then $\hat{f}$ is an \emph{unconstrained} Willmore {immersion}.
\end{thm}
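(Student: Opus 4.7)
The strategy is to rescale parabolically around the point $(t_j, x_j)$ at scale $r_j$, use the lifespan theorem to ensure the rescaled flow survives for time $\hat{c}$, apply the higher-order estimates to get uniform smoothness, and extract a smooth limit via Arzelà--Ascoli. Finally, the energy dissipation will force the limit to be stationary, and the integrability of $\lambda^2\CalA$ will force $\hat\lambda=0$.

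\emph{Step 1 (lifespan of rescaled flow).} Define the rescaled flow $\bar f_j(s,\cdot)\defeq r_j^{-1}(f(t_j+r_j^4 s,\cdot)-x_j)$, which is a volume-preserving Willmore flow by \Cref{rem:ParabolicScaling}. By the conformal invariance of $\CalW$ and the energy decay \eqref{eq:dtW<=0}, $\CalW(\bar f_j(0))=\CalW(f_{t_j})\leq \CalW(f_0)\leq K$. The hypothesis $\kappa(t_j,r_j)\leq \varepsilon_3=\bar\varepsilon\hat c$ rescales to $\int_{B_1(y)}|\bar A_j|^2\diff\bar\mu_j\leq \bar\varepsilon\hat c<\bar\varepsilon$ for every $y\in\R^3$. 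Applying \Cref{thm:Lifespan} (with $\rho=1$) to $\bar f_j$ yields a maximal existence time $>\hat c$, proving \eqref{eq:t_j+cr_j<T}, together with $\int_{B_1(y)}|\bar A_j|^2\diff\bar\mu_j\leq \bar\varepsilon$ on $[0,\hat c]$.

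\emph{Step 2 (uniform higher-order estimates and subsequential limit).} Since $\bar f_j$ satisfies a uniform non-concentration bound on $[0,\hat c]$ and $\CalW(\bar f_j(0))\leq K$, \Cref{prop:lambda4/3Nonconcentration} gives $\int_0^{\hat c}|\bar\lambda_j|^{4/3}\diff s\leq C(K,\chi(\Sigma))\eqdef L$. Applying \Cref{thm:HigherOrderEstimatesTLocalized} to $\bar f_j$ (with $\rho=1$, $T^*=\hat c$), evaluated at $s=\hat c$, yields
\begin{align}
\norm{\nabla^m \bar A_j(\hat c)}{L^\infty(B_{1/2}(y))}\leq C(m,K,\chi(\Sigma),\hat c)\sqrt{\bar\varepsilon}
\end{align}
uniformly in $j\in\N$, $y\in\R^3$. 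Writing $\hat f_j$ locally as graphs over tangent planes at points of $\hat f_j^{-1}(\overline{B_R(0)})$ (as in \cite[pp. 345--349]{KSRemovability}), the uniform $C^k$ bounds combined with an Arzelà--Ascoli argument and a Cantor diagonal extraction over an exhaustion $\overline{B_\ell(0)}\uparrow\R^3$ produce a proper smooth immersion $\hat f\colon\hat\Sigma\to\R^3$ such that, after reparametrization, $\hat f_j\to \hat f$ smoothly on compact subsets. The bound $\CalW(\hat f)\leq K$ follows from Fatou on compact subsets of $\hat\Sigma$ using scale-invariance of $\CalW$.

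\emph{Step 3 (the limit is a constrained Willmore immersion).} The higher-order estimates extend uniformly to the whole time interval $[\hat c/2,\hat c]$, so after refining the subsequence, $\bar f_j\to \hat f_\infty(s,\cdot)$ smoothly on compact subsets of $[\hat c/2,\hat c]\times \hat\Sigma$. The energy identity \eqref{eq:dtW<=0} and the Cauchy criterion for the decreasing $\overline{\CalW}(f_t)$ give, as $j\to\infty$,
\begin{align}
\int_{\hat c/2}^{\hat c}\!\int |\partial_s \bar f_j|^2\diff\bar\mu_j\diff s=\int_{t_j+r_j^4\hat c/2}^{t_j+r_j^4\hat c}\!\int|\partial_t f|^2\diff\mu\diff t \longrightarrow 0,
\end{align}
since both endpoints tend to $T$. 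Passing to the limit yields $\partial_s \hat f_\infty\equiv 0$, so $\hat f_\infty(s,\cdot)=\hat f$ is stationary. Passing to the limit in \eqref{eq:VpWF} pointwise (using that the nonlinear terms in the second fundamental form converge locally smoothly, and that $\bar\lambda_j(s)$ therefore must also converge to some scalar $\hat\lambda$) yields $\Delta \hat H+|\hat A^0|^2\hat H=\hat\lambda$ on $\hat\Sigma$, so $\hat f$ solves \eqref{eq:stationary}.

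\emph{Step 4 (unconstrained in the $L^2(\lambda^2\CalA)$ case).} Under the additional integrability, using the scaling identity \eqref{eq:scale lambda^2},
\begin{align}
\int_{\hat c/2}^{\hat c}\bar\lambda_j^2\,\CalA(\bar f_j)\diff s=\int_{t_j+r_j^4\hat c/2}^{t_j+r_j^4\hat c}\lambda^2\,\CalA(f_t)\diff t\longrightarrow 0.
\end{align}
For any precompact open set $\hat U\Subset\hat\Sigma$ the smooth convergence forces $\mathcal{A}(\bar f_j|_{\hat U})\to\mathcal{A}(\hat f|_{\hat U})>0$, so $\liminf_j \CalA(\bar f_j)>0$. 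Since $\bar\lambda_j\to\hat\lambda$, Fatou gives $(\hat c/2)\hat\lambda^2 \CalA(\hat f|_{\hat U})\leq 0$, hence $\hat\lambda=0$.

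\emph{Anticipated obstacle.} The main technical difficulty is Step 2: assembling a global smooth limit $\hat f$ on a possibly noncompact $\hat\Sigma$ out of uniform local smoothness of reparametrizations of $\hat f_j$, since the sequence may lose mass at spatial infinity and there is no global control of $\Sigma\to\R^3$ without translations. The argument follows Kuwert--Sch\"atzle, but one has to handle several base points per ball and a diagonal extraction carefully. A secondary subtle point is justifying $\bar\lambda_j\to\hat\lambda$: this is not obtained from the nonlocal formula \eqref{eq:deflambda} directly but from the pointwise convergence of all other terms in the flow equation on compact subsets of $\hat\Sigma$.
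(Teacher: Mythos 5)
Your overall architecture coincides with the paper's: parabolic rescaling plus \Cref{thm:Lifespan} for \eqref{eq:t_j+cr_j<T} and the non-concentration bound on $[0,\hat c]$, then \Cref{prop:lambda4/3Nonconcentration} and \Cref{thm:HigherOrderEstimatesTLocalized} for uniform interior curvature bounds, a Langer-type graph/compactness argument for the spatial limit, the energy identity for stationarity, and the scaling identity \eqref{eq:scale lambda^2} for the unconstrained case. Steps 1 and 2 and the energy computation in Step 3 are correct as written; the paper packages your graph construction as \Cref{thm:LocLangerCompactness}, for which you should also record the local area bounds coming from Simon's monotonicity formula \eqref{eq:SimonMono 3} as an explicit input.

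The genuine gap is the convergence $\bar\lambda_j\to\hat\lambda$ and, with it, the passage to the limit in \eqref{eq:VpWF}. Your proposed justification is circular: you infer convergence of $\bar\lambda_j$ ``from the pointwise convergence of all other terms in the flow equation,'' but one of those terms is $\partial_s\bar f_j$ itself, and the energy identity only gives $\partial_s\bar f_j\to 0$ in $L^2$ of space--time, not pointwise at the fixed time $s=\hat c$. To evaluate the equation at $s=\hat c$ (equivalently, to get $C^1$-in-time convergence of the rescaled flows) you need time-equicontinuity of $\partial_s\bar f_j$, hence a uniform bound on $\partial_s\bar\lambda_j$. The paper obtains exactly this in \eqref{eq:dt lambda Zaehler}--\eqref{eq:dt lambda gesamt} by differentiating the quotient \eqref{eq:deflambda} in time, computing $\partial_t\int|A^0|^2H\,\diff\mu$ via \Cref{lem:geometricEvolutions} and bounding the result with the interior estimates \eqref{eq:Af_j Cinfty bounds}; only then do Arzel\`a--Ascoli in space--time and the identification $-\nabla\overline{\CalW}(\hat f)+\hat\lambda\hat\nu=0$ go through. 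The same issue undermines your Step 4, where ``since $\bar\lambda_j\to\hat\lambda$, Fatou gives\dots'' presupposes the unproven convergence. (Your route for Step 4 is otherwise a legitimate alternative to the paper's, which instead shows $\int\!\!\int|\nabla\overline{\CalW}(\tilde f_j)|^2\to 0$ directly from \eqref{eq:blowupStatic1} and thus never needs $\hat\lambda$ for that part; but the first part of the theorem cannot dispense with the $\partial_t\lambda$ bound, or at least a substitute weak-$*$ compactness argument for $\bar\lambda_j$ in $L^\infty$, which you do not supply.)
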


\add{Note that while we cannot apply \Cref{lem:constr Willmore} to the limit immersion, under the $L^2$-integrability condition above, we still find that the Willmore part of the evolution dominates in the blow-up.}

\delete{The last part of the statement is particularly remarkable, since we cannot simply apply \Cref{lem:constr Willmore} as the limit surface $\hat{\Sigma}$ is not necessarily compact and could have zero volume.}

\begin{remark}\label{rem:lambda^2A 8pi}
	By \Cref{prop:Lambda^2AVpWF}, the condition $\int_0^T\lambda(t)^2\CalA(f_t)\diff t< \infty$ is automatically satisfied if ${\CalW}(f_0)<8\pi$, i.e.\ if $K<8\pi$ in \Cref{thm:BlowUpExistence}. 
\end{remark}

\begin{remark}
	\add{For general sequences $(t_j)_{j\in\N}, (r_j)_{j\in\N}$ and $(x_j)_{j\in\N}$, the limit may be trivial, for instance, $\hat{\Sigma}=\emptyset$, if $\hat{f}_j$ parametrizes the round spheres $\partial B_1(x_j)$ with $x_j\to\infty$.} In order to make use of the construction, we will select $t_j$ and $x_j$ such that this cannot happen.
\end{remark}
{
Any constrained Willmore immersion $\hat{f}\colon\hat{\Sigma}\to\R^3$ which arises from the process described in \Cref{thm:BlowUpExistence} is called a \emph{concentration limit}. More precisely, we call $\hat{f}$ a \emph{blow-up} if $r_j\to 0$, a \emph{blow-down} for $r_j \to\infty$ and a \emph{limit under translation} if $r_j\to r\in (0, \infty)$. Note that by \eqref{eq:t_j+cr_j<T} the last two can only occur if $T=\infty$.}

\begin{proof}[{Proof of \Cref{thm:BlowUpExistence}}]
For $j\in \N$, we consider the rescaled and translated flows
\begin{align}
{f}_j \colon [-r_j^{-4}t_j, r^{-4}_j(T-t_j))\times \Sigma\to\R^3, \\
{f}_j(t, p) = r_j^{-1}\left(f(t_j+r_j^4 t, p)-x_j\right)
\end{align}
and observe that $f_j$ is a volume-preserving Willmore flow  with initial datum given by $f_j(0)=r_j^{-1}\left(f(t_j, \cdot)-x_j\right)$ and maximal existence time $r_j^{-4}(T-t_j)$. In particular by \Cref{rem:W strict Lyapunov} we have ${\CalW}(f_j(0))\leq K$ for any $j\in \N$. Moreover, by \eqref{eq:blowup assumption} we have
\begin{align}
\sup_{x\in \R^3}\int_{B_1(x)}\abs{A_{f_j(0, \cdot)}}^2\diff\mu_{f_j(0, \cdot)} = \sup_{x\in \R^3}\int_{B_{r_j}(x)}\abs{A_{f_{t_j}}}^2\diff\mu_{f_{t_j}} \leq \varepsilon_3.
\end{align}
Hence, by \Cref{thm:Lifespan} the maximal existence time of the flow $f_j$ is bounded from below by $\hat{c} = \hat{c}(K, \chi(\Sigma))$ and \eqref{eq:t_j+cr_j<T} follows. Furthermore, \eqref{eq:LifeSpanAEstimate} yields
\begin{align}
\sup_{x\in \R^3}\int_{B_1(x)}\abs{A_{f_j(t, \cdot)}}^2\diff\mu_{f_j(t, \cdot)} \leq \bar{\varepsilon}<\varepsilon_2 \quad \text{for all }0\leq t\leq \hat{c},
\end{align}
{using that $\bar{\varepsilon}<\varepsilon_2$ by definition (cf. Proof of \Cref{thm:Lifespan}), where $\varepsilon_2>0$ is as in \Cref{prop:lambda4/3Nonconcentration}.}
Consequently, by \Cref{prop:lambda4/3Nonconcentration} the $L^{4/3}(0,\hat{c})$-norm of the Lagrange multiplier of ${f}_j$ is bounded by $C(K,\chi(\Sigma))$ for any $j\in \N$.
Therefore, using $\bar{\varepsilon}<\varepsilon_2\leq \varepsilon_0$ (cf. \Cref{prop:lambda4/3Nonconcentration})  by \Cref{thm:HigherOrderEstimatesTLocalized} we find
\begin{align}\label{eq:Af_j Cinfty bounds}
\norm{\nabla^m A_{f_j(t, \cdot)}}{\infty} \leq C(m,K, \chi(\Sigma)) t^{-\frac{m+1}{4}} \quad\text{for }0< t\leq \hat{c}.
\end{align}
Moreover, \add{using the scale-invariance of the Willmore energy, \eqref{eq:dtW<=0} and the a-priori energy bound, we can use Simon's monotonicity formula \cite{SimonWillmore} to conclude that}
\begin{align}\label{eq:SimonLocalAreabound}
R^{-2}\mu_{f_j(t, \cdot)}\left(B_R(0)\right) \leq C(K, \chi(\Sigma))<\infty \quad \text{for all }0<t\leq \hat{c}, R>0.
\end{align}
Thus, we may apply \Cref{thm:LocLangerCompactness} and \Cref{cor:langer_complete} to the sequence of immersions $\hat{f}_j \defeq f_j(\hat
c, \cdot)$.

After passing to a subsequence, we thus find a proper limit immersion $\hat{f}\colon\hat{\Sigma}\to\R^3$, where $\hat{\Sigma}$ is a complete surface without boundary, diffeomorphisms $\phi_j\colon\hat{\Sigma}(j)\to U_j$, where $U_j\subset {\Sigma}$ are open sets and $\hat{\Sigma}(j) = \{ p\in \hat{\Sigma}\mid \abs{\hat{f}(p)}<j\}$, and functions $u_j \in C^{\infty}(\hat{\Sigma}(j);\R^3)$ such that we have
\begin{align}\label{eq:hat f representation}
& \hat{f}_j \circ\phi_j = \hat{f} + u_j \quad \text{on }\hat{\Sigma}(j)
\end{align}
as well as $\norm{\hat{\nabla}^m u_j}{L^{\infty}(\hat{\Sigma}(j), \hat{g})}\to 0$ for $j\to\infty$ for all $m\in \N_0$. 

For $j\in\N$, we now define the flows $\tilde{f}_j \defeq f_j\circ\phi_j\defeq f_j(\cdot, \phi_j(\cdot))\colon(0, \hat{c}]\times \hat{\Sigma}(j)\to\R^3$ and observe that they also satisfy the curvature estimates \eqref{eq:Af_j Cinfty bounds}.

\delete{As in \eqref{eq:starstar}, }We use \eqref{eq:Af_j Cinfty bounds} to estimate
\begin{align}\label{eq:lambda bound}
	\abs{\lambda(f_j)} \leq C\norm{A_{f_j}}{\infty}^3\leq C(K, \chi(\Sigma), \xi).
\end{align}

\add{Now, using \eqref{eq:deflambda}, \eqref{eq:Af_j Cinfty bounds} and \eqref{eq:lambda bound}, it is not difficult to also bound $|\partial_t \lambda(f_j(t,\cdot))|$ and thus $\norm{\partial_t \tilde{f}_j(t,\cdot)}{L^{\infty}(\hat{\Sigma}(j))}$ for $t\in [\xi, \hat{c}]$, $j\in \N$ by $C(K, \chi(\Sigma), \xi)$.
From here on, it is a standard procedure to establish $L^{\infty}$-bounds in a local chart $(U, \psi)$ of $\hat{\Sigma}$, i.e.\ estimates of the form
	\begin{align}\label{eq:bounds}
		\norm{\partial^m \partial_t \tilde{f}_j}{L^{\infty}(U)}+ \norm{\partial^{m+1} \tilde{f}_j}{L^{\infty(U)}}\leq C(m,K, \chi(\Sigma), \xi) \quad \text{ for all } t\in[\xi, \hat{c}], m\geq 0, 
	\end{align}
for all $j\geq J$ sufficiently large, where $\partial$ denotes the coordinate derivative in the chart $(U, \psi)$, see for instance \cite[p.~331--332]{KSGF}. By \eqref{eq:def normal}, this also transfers to estimates of the induced normal field $\nu_{\tilde{f}_j}\defeq \nu_{f_j}\circ \phi_j$. 
}
\delete{
Using \eqref{eq:dtg}, \eqref{eq:Af_j Cinfty bounds} and \eqref{eq:dtf_jphi_j} {and \eqref{eq:lambda bound}} we have
\begin{align}\label{eq:dt g bounded}
\norm{\partial_t {g_{\tilde{f}_j}}}{\infty} = \norm{2\langle {A}_{\tilde{f}_j}, \partial_t \tilde{f}_j\rangle}{\infty} \leq C(K, \chi(\Sigma), \xi)
\end{align}
for $\xi\leq t\leq \hat{c}$.
As in \cite[pp. 331 -- 332]{KSGF}, \delete{see also the proof of \Cref{thm:Lifespan}}, for $J\in \N$ and a local chart $(U, \psi)$ with $U\subset \hat{\Sigma}(J)\subset \hat{\Sigma}$, one can use \eqref{eq:Af_j Cinfty bounds}, \eqref{eq:dt g bounded} and \delete{again} \cite[Lemma 14.2]{Hamilton82} to show the bounds
\begin{align}
\norm{\partial^{m+1}\tilde{f}_j}{L^{\infty}(U)}&\leq C(m, K, \chi(\Sigma), \xi) \quad\text{ for all }\xi\leq t\leq \hat{c}, m\geq 0,\\
\omega &\leq \abs{\det \tilde{g}_j} \leq \omega^{-1}\qquad~ \text{ on }[\xi, \hat{c}]\times U,\label{eq:dm tilde f_j bounds}
\end{align}
for all $j\geq J$ and some $\omega = \omega(K, \chi(\Sigma), \xi)>0$, where $\partial$ denotes the coordinate derivative in the chart $(U, \psi)$. }

\delete{We will now estimate the normal. Denote by $(y^1, y^2)$ the local coordinates induced by $(U, \psi)$ and observe that $(\phi_j(U), \psi_j)$ with $\psi_j \defeq \psi \circ \phi_j^{-1}$ is a local chart for $\Sigma$. Note that while $(\phi_j(U), \psi_j)$ might not necessarily belong to the orientation, we still have
	\begin{align}\label{eq:normal cross product}
		\nu_{f_j} = (-1)^{s_j} \frac{\partial_{z^1} f_j\times \partial_{z^2}f_j}{\abs{\partial_{z^1} f_j\times \partial_{z^2}f_j}}  \quad \text{on } \phi_j(U),
	\end{align}
	for some $s_j\in\{0,1\}$, where $(z^1, z^2)$ denote the local coordinates on $\Sigma$ induced by the chart $(\phi_j(U), \psi_j)$.
	The coordinate derivatives transform by
	$ \partial_{z^i} f_j \circ \phi_j= \partial_{y^i} (f_j \circ \phi_j)$ for $i=1,2$
	such that by \eqref{eq:normal cross product} we have 
	\begin{align}\label{eq:normal cross product 2}
		\nu_{\tilde{f}_j}\defeq \nu_{f_j}\circ \phi_j = (-1)^{s_j} \frac{\partial_{y^1} \tilde{f}_j\times \partial_{y^2} \tilde{f}_j}{\abs{\partial_{y^1} \tilde{f}_j\times \partial_{y^2} \tilde{f}_j}}\quad \text{on } U.
	\end{align}
}
	\delete{Consequently, by \eqref{eq:dm tilde f_j bounds} we obtain the bounds $\norm{\partial^{m}\nu_{\tilde{f}_j}}{L^{\infty}(U)}\leq C(m,K, \chi(\Sigma), \xi)$.}

Moreover, \add{using the scale-invariance, cf.\ \Cref{rem:ParabolicScaling}, and the invariance under repa\-ra\-metrization}, we have the evolution
\begin{align}
	\partial_t \tilde{f}_j 
	& = -\nabla\overline{\CalW}(\tilde{f}_j) + \lambda(f_j)\nu_{ \tilde{f}_j}.\label{eq:dtf_jphi_j}
\end{align}
\add{Using the established bounds and the evolution \eqref{eq:dtf_jphi_j}, it is not difficult to see that $\tilde{f_j}$ converges in $C^1([\xi, \hat{c}]; C^{m}(P;\R^3))$ for all $P\subset \hat{\Sigma}$ compact and for all $m\in \N$ to a limit flow $f_{lim}\colon [\xi, \hat{c}]\times \hat{\Sigma}\to \R^3$ and $\lambda(f_j)\to \lambda_{lim}$ in $C^0([\xi,\hat c];\R)$ as $j\to\infty$, after passing to a subsequence.}
\delete{
Therefore, also using \eqref{eq:normal cross product 2} we obtain the bounds 
\begin{align}
\norm{\partial^{m}\partial_t \tilde{f}_j}{L^{\infty}(U)}&\leq C(m,K, \chi(\Sigma), \xi)\\
\norm{\partial^m \partial_t \nu_{\tilde{f}_j}}{L^{\infty}(U)}&\leq C(m,K, \chi(\Sigma), \xi)\quad\text{ for all }\xi\leq t\leq \hat{c},m\geq 0, j\geq J.\label{eq:dm dt tilde f_j bounds}
\end{align}

Combining \eqref{eq:dt lambda gesamt} and \eqref{eq:dm dt tilde f_j bounds} and using the evolution \eqref{eq:dtf_jphi_j} again, it is not difficult to see that one also has the stronger estimates
\begin{align}
	\norm{\partial^{m}\partial_t^2 \tilde{f}_j}{L^{\infty}(U)}\leq C(m,K, \chi(\Sigma), \xi)) \quad\text{ for all }\xi\leq t\leq \hat{c},m\geq 0, j\geq J.
\end{align}

A covering argument extends these bounds to any compact $P\subset \hat{\Sigma}$. By the Arzelà--Ascoli Theorem and a diagonal argument, after passing to a subsequence and an appropriate translation we may thus assume that $\tilde{f_j}$ converges in $C^1([\xi, \hat{c}]; C^{m}(P;\R^3))$ for all $P\subset \hat{\Sigma}$ compact and for all $m\in \N$ to a limit flow $f_{lim}\colon [\xi, \hat{c}]\times \hat{\Sigma}\to \R^3$. Consequently, by \eqref{eq:normal cross product 2} we also have $\nu_{\tilde{f}_j} = \nu_{f_j}\circ \phi_j \to \nu_{lim}$ in $C^1([\xi, \hat{c}]; C^{m}(P;\R^3))$ for all $P\subset \hat{\Sigma}$ compact and for all $m\in \N$, where $\nu_{lim}(t, \cdot)$ is normal along $f_{lim}(t, \cdot)$ for all $\xi\leq t\leq \hat{c}$.
Moreover, by \eqref{eq:lambda bound} and \eqref{eq:dt lambda gesamt} we can assume $\lambda(f_j(t, \cdot)) \to \lambda_{lim}$ in $C^0([\xi, \hat{c}];\R)$ as $j\to\infty$.
}

Fix $P\subset \hat{\Sigma}$ compact and let $j\in \N$ \add{be} large enough. Then, using \eqref{eq:dtf_jphi_j}, \eqref{eq:dtV=0} and \eqref{eq:dtW<=0}
\begin{align}
	\int_\xi^{\hat{c}}\int_P \abs{\partial_t \tilde{f}_j}^2\diff \mu_{\tilde{f}_j} \diff t &\leq \int_{\xi}^{\hat{c}} \int_{\Sigma} \langle -\nabla\overline{\CalW}({f}_j) + \lambda(f_j)\nu_{f_j}, \partial_t f_j \rangle\diff \mu_{f_j}\diff t = \int_\xi^{\hat{c}} (-\partial_t \overline{\CalW}(f_j))\diff t.
\end{align}
In particular, using the convergence $\tilde{f}_j \to f_{lim}$ in $C^1([\xi, \hat{c}]; C^{m}(P;\R^3))$, we find
\begin{align}\label{eq:bar W to zero}
	\int_{\xi}^{\hat{c}} \int_{P} \abs{\partial_t f_{lim}}^2\diff \mu_{f_{\lim}}\diff t \leq \lim_{j\to\infty}\left( \overline{\CalW}(f(t_j+r_j^4\xi,\cdot))- \overline{\CalW}(f(t_j+r_j^4\hat{c},\cdot))\right) = 0,
\end{align}
by \add{scale-invariance and} monotonicity of the energy \delete{the limit $\lim_{t\nearrow T}\overline{\CalW}(f(t, \cdot))\leq \overline{\CalW}(f_0)<\infty$ exists.} Consequently, $f_{lim}$ is constant in time, hence $f_{lim}\equiv f_{lim}(\hat{c}, \cdot) = \lim_{j\to\infty} f_j(\hat{c}, \phi_j(\cdot)) = \lim_{j\to\infty}\hat{f}_j\circ\phi_j = \hat{f}$. 
We observe that $\hat{\nu} \defeq \nu_{lim}(\hat{c}, \cdot)$ is a global and smooth normal vector field on $\hat{\Sigma}$ and hence $\hat{\Sigma}$ is orientable. Setting $\hat{\lambda}\defeq \lambda_{lim}(\hat{c})$ and using \eqref{eq:dtf_jphi_j} we find
\begin{align}
	-\nabla\overline{\CalW}(\hat{f}) + \hat{\lambda} \hat{\nu} = \lim_{j\to\infty} \partial_t \tilde{f}_j(\hat{c}, \cdot) = \partial_t f_{lim}(\hat{c}, \cdot) = 0\quad \text{on }\hat{\Sigma},
\end{align}
so $\hat{f}$ solves \eqref{eq:stationary} and hence is a constrained Willmore immersion.
In addition, \delete{the energy dissipation, the invariance and }the lower semicontinuity of the Willmore functional $\CalW$ with respect to smooth convergence on compact sets (which is discussed in \cite[Appendix B]{DMSS20} for instance) yield
\begin{align}\label{eq:Willmore lower semicont}
	{\CalW}(\hat{f})\leq \liminf_{j\to\infty} {\CalW}(\hat{f}_j) \leq {\CalW}(f_0)\leq K.
\end{align}

For the ``moreover'' part of the theorem, we note that since $f_j$ is a volume-preserving Willmore flow we find by \eqref{eq:VpWF} and \eqref{eq:deflambda}
\begin{align}
&\int_\xi^{\hat{c}} \int_{P} \abs{\nabla\overline{\CalW}(\tilde{f}_j)}^2 \diff \mu_{\tilde{f}_j}  \diff t\leq \int_\xi^{\hat{c}}\int_{\Sigma} \abs{\nabla\overline{\CalW}(f_j)}^2\diff \mu_{f_j} \diff t  \\ &\qquad  = \int_\xi^{\hat{c}} \int_{\Sigma} \left\langle -\partial_t f_j + \lambda(f_j)\nu_{f_j}, \nabla\overline{\CalW}(f_j)\right\rangle \diff \mu_{f_j} \diff t   \\
&\qquad = - \int_\xi^{\hat{c}} \partial_t \overline{\CalW}(f_j) \diff t + \int_\xi^{\hat{c}} \abs{\lambda (f_j)}^2 \CalA(f_j)\diff t.\label{eq:blowupStatic1}
\end{align}
{As in \eqref{eq:bar W to zero}, the first term goes to zero as $j\to\infty.$} 
For the second term, note that by \eqref{eq:deflambda}, $\lambda$ scales by $\lambda(r^{-1}f) = r^3 \lambda(f)$ for  $r>0$. Therefore, we find
\begin{align}
&\int_\xi^{\hat{c}} \abs{\lambda(r_j^{-1}f(t_j+r_j^4t, \cdot))}^2 \CalA (r_j^{-1}f(t_j+r_j^4t, \cdot)) \diff t  \\
&\qquad = \int_{t_j+r_j^4 \xi}^{t_j+r_j^{4}\hat{c}} \abs{r_j^{3}\lambda (f(\tau, \cdot))}^2 r_j^{-2} \CalA(f(\tau, \cdot))r_j^{-4}\diff \tau = \int_{t_j+r_j^4 \xi}^{t_j+r^{4}_j\hat{c}} \abs{\lambda(f_\tau)}^2\CalA(f_\tau)\diff \tau,\label{eq:blowupStatic2}
\end{align}
after a change of variables. Recall that by assumption $\int_0^{T}\lambda^2\CalA\diff t<\infty$, so \delete{taking $t_j \nearrow T$,} the second term in \eqref{eq:blowupStatic1} also goes to zero as $j\to\infty$ using dominated convergence. Consequently, by \eqref{eq:blowupStatic1}, we have
\begin{align}
&\int_{\xi}^{\hat{c}} \int_{P} \abs{\nabla\overline{\CalW}(f_{lim})}^2\diff\mu_{f_{lim}}\diff t = \lim_{j\to\infty}\int_\xi^{\hat{c}} \int_{P}\abs{\nabla\overline{\CalW}(\tilde{f}_j)}^2 \diff \mu_{\tilde{f}_j} \diff t = 0.\label{eq:blowupStatic3}
\end{align}
Since $f_{lim}(t, \cdot)\equiv \hat{f}$ and as $P$ was arbitrary, we conclude $\nabla\overline{\CalW}(\hat{f})=0$, so $\hat{f}$ is a Willmore immersion.
\end{proof}

\delete{
\begin{lem}[\add{\cite[Lemma 4.3]{KSSI}}]\label{lem:KSSI Lemma 4.3}
	Let $\hat{f}\colon\hat{\Sigma}\to\R^3$ be {as in \Cref{thm:BlowUpExistence}} with $\hat{\Sigma}\neq \emptyset$. If one component $C$ of $\hat{\Sigma}$ is compact, then $C=\hat{\Sigma}$ and $\Sigma$ is diffeomorphic to $C= \hat{\Sigma}$.
\end{lem}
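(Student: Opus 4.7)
The plan is to exploit the properness of $\hat{f}$ and the fact that, for large $j$, the diffeomorphisms $\phi_j$ from the convergence setup \eqref{eq:hat f representation} allow us to transport the compact component $C$ into $\Sigma$ and use the connectedness of $\Sigma$ to conclude that this transported copy exhausts everything.

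First, since $C$ is compact and $\hat{f}$ is proper, $\hat{f}(C)\subset \R^3$ is bounded, so there exists $j_0\in \N$ such that $C\subset \hat{\Sigma}(j)=\{p\in \hat{\Sigma}\mid \abs{\hat{f}(p)}<j\}$ for all $j\geq j_0$. For such $j$, $\phi_j\colon\hat{\Sigma}(j)\to U_j\subset \Sigma$ is a diffeomorphism onto the open set $U_j$, and in particular $\phi_j$ restricts to a smooth embedding $\phi_j\vert_C\colon C\to \Sigma$.

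Now I would argue that $\phi_j(C)\subset \Sigma$ is simultaneously open and closed. Openness follows because $C$ is a connected component of the (topological) manifold $\hat{\Sigma}$ and is therefore open in $\hat{\Sigma}$; since $\phi_j$ is a local diffeomorphism from $\hat{\Sigma}(j)$ into $\Sigma$, it maps open sets to open sets, so $\phi_j(C)\subset U_j\subset \Sigma$ is open. Closedness follows because $C$ is compact, hence $\phi_j(C)$ is compact and thus closed in the Hausdorff space $\Sigma$. Since $\Sigma$ is connected and $\phi_j(C)\neq \emptyset$, this forces $\phi_j(C)=\Sigma$. In particular, $\phi_j\vert_C\colon C\to \Sigma$ is a diffeomorphism.

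It remains to deduce $C=\hat{\Sigma}$. Since $\phi_j$ is injective on $\hat{\Sigma}(j)\supset C$ and $\phi_j(\hat{\Sigma}(j))=U_j\supset \phi_j(C)=\Sigma$, we obtain $U_j=\Sigma$ and hence $\hat{\Sigma}(j)=\phi_j^{-1}(\Sigma)=\phi_j^{-1}(\phi_j(C))=C$ for every $j\geq j_0$. As $\hat{\Sigma}=\bigcup_{j\in \N}\hat{\Sigma}(j)$ by the construction of $\hat{\Sigma}(j)$ in \Cref{thm:BlowUpExistence}, this yields $\hat{\Sigma}=C$, and $\phi_{j_0}$ supplies the desired diffeomorphism $\Sigma\cong \hat{\Sigma}=C$.

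There is no real analytic difficulty here; the only point to watch is that $\phi_j(C)$ is open in $\Sigma$ (and not merely in $U_j$), which is why we rely on $\phi_j$ being a local diffeomorphism into $\Sigma$ rather than only a diffeomorphism onto $U_j$. Everything else is a clean topological consequence of compactness of $C$, properness of $\hat{f}$, and connectedness of $\Sigma$.
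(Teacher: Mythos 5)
Your proof is correct and follows essentially the same route as the paper: compactness of $C$ and boundedness of $\hat{f}(C)$ place $C$ inside $\hat{\Sigma}(j)$ for large $j$, then openness (component of a manifold, pushed forward by the diffeomorphism $\phi_j$) plus closedness (compactness) plus connectedness of $\Sigma$ give $\phi_j(C)=\Sigma$, and the exhaustion $\hat{\Sigma}=\bigcup_j\hat{\Sigma}(j)$ yields $C=\hat{\Sigma}$. The only cosmetic difference is that you invoke properness of $\hat{f}$ where plain continuity on the compact set $C$ already suffices; otherwise the argument matches the paper's.
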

\begin{proof}
	Let $j_0$ be large enough such that $C\subset \hat{\Sigma}(j_0)$ and let $j\geq j_0$. Since $C\neq\emptyset$ is a connected component, it is open in $\hat{\Sigma}$ and thus so is its image $\phi_j(C)\subset \Sigma$. However, $\phi_j(C)$ is also compact, thus closed. So $\phi_j(C)=\Sigma$ as $\Sigma$ is connected, and consequently $\hat{\Sigma} = \bigcup_{j\geq j_0}\hat{\Sigma}(j) = C$, as $ \hat{\Sigma}(j)=\phi_j^{-1}(\Sigma)=C$ for $j\geq j_0$.
\end{proof}
}

\delete{
\begin{lem}\label{lem:existence of r_t} Fix an initial energy ${\CalW}(f_0)$. There exists $\varepsilon_4 = \varepsilon_4({\CalW}(f_0), \chi(\Sigma))\in (0, \varepsilon_3)$ and a radius $r_t = r_t({\CalW}(f_0), \chi(\Sigma))>0$ for every $t\in [0,T)$ such that we have
	\begin{align}
	\varepsilon_4\leq\kappa(t, r_t) \leq \varepsilon_3 \quad \text{for all }t\in [0,T).
	\end{align}
\end{lem}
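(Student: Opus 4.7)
The plan is to define $r_t$ as the supremum of radii where the curvature concentration stays below $\varepsilon_3$, and then use the continuity/jump control \eqref{eq:kappa continuity} to extract the desired two-sided bound.

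First I would verify that the map $r \mapsto \kappa(t,r)$ is monotone non-decreasing and examine its behavior at the endpoints. As $r \to 0^+$, since $f_t$ is a smooth immersion of the compact surface $\Sigma$, one has the uniform bound $\int_{B_r(x)}\abs{A_t}^2\diff \mu_t \leq \norm{A_t}{L^\infty(\Sigma)}^2\mu_t(B_r(x))$, and the right hand side tends to $0$ as $r\to 0$ uniformly in $x\in \R^3$, so $\kappa(t,r)\to 0$. As $r\to\infty$, eventually $f_t(\Sigma)\subset B_r(x)$ for suitable $x$, hence $\kappa(t,r)\to \int_\Sigma\abs{A_t}^2\diff\mu_t$. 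By the monotonicity of the Willmore energy \eqref{eq:dtW<=0}, combined with \eqref{eq:A^2GaussBonnet} and the Willmore inequality $\CalW\geq 4\pi$, we have
\begin{align}
	\int_\Sigma \abs{A_t}^2\diff\mu_t = 4\CalW(f_t)-4\pi\chi(\Sigma) \geq 16\pi - 4\pi\chi(\Sigma) \geq 8\pi,
\end{align}
using $\chi(\Sigma)\leq 2$. Since $\varepsilon_3 = \bar{\varepsilon}\hat{c}<\bar{\varepsilon}<\varepsilon_2<8\pi$ by the construction in the proof of \Cref{thm:Lifespan} and \Cref{prop:lambda4/3Nonconcentration}, this lower bound strictly exceeds $\varepsilon_3$.

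I would then define
\begin{align}
	r_t \defeq \sup\{r>0\mid \kappa(t,r)\leq \varepsilon_3\}.
\end{align}
The two limiting behaviors above guarantee $r_t\in (0,\infty)$. The left-continuity in \eqref{eq:kappa continuity} immediately yields
\begin{align}
	\kappa(t,r_t) = \lim_{r\nearrow r_t}\kappa(t,r)\leq \varepsilon_3,
\end{align}
giving the upper bound. For the lower bound, the definition of $r_t$ as a supremum gives $\kappa(t,r)>\varepsilon_3$ for every $r>r_t$, so $\liminf_{r\searrow r_t}\kappa(t,r)\geq \varepsilon_3$, and the jump estimate in \eqref{eq:kappa continuity} produces
\begin{align}
	\varepsilon_3 \leq \limsup_{r\searrow r_t}\kappa(t,r) \leq \Gamma\, \kappa(t,r_t).
\end{align}
Thus $\kappa(t,r_t)\geq \varepsilon_3/\Gamma$. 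Setting $\varepsilon_4\defeq \varepsilon_3/\Gamma\in (0,\varepsilon_3)$ concludes the proof; the dependence on $\CalW(f_0)$ and $\chi(\Sigma)$ enters through $\hat{c}$ in the definition of $\varepsilon_3$.

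The only mild subtlety is the behavior of $\kappa(t,r)$ at the endpoints; everything else is an immediate consequence of \eqref{eq:kappa continuity}. No sharp energy threshold is needed here, and the argument works for any compact oriented surface.
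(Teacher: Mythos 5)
Your argument is correct and follows essentially the same route as the paper: define $r_t$ as a supremum of admissible radii, use the global bound $\int\abs{A_t}^2\diff\mu_t\geq 8\pi>\varepsilon_3$ for finiteness, and extract the two-sided estimate from the jump control \eqref{eq:kappa continuity}, ending with $\varepsilon_4=\varepsilon_3/\Gamma$. The only cosmetic differences are the threshold used in the supremum ($\kappa\leq\varepsilon_3$ versus the paper's $\kappa<\Gamma\varepsilon_4$) and that the paper justifies $r_t>0$ via absolute continuity of the integral together with Simon's monotonicity formula, which is also what underlies your claim that $\mu_t(B_r(x))\to 0$ uniformly in $x$.
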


\begin{proof}
	Let $0<\varepsilon_4< \frac{8\pi}{\Gamma}$ to be chosen and define $r_t \defeq \sup\{r>0\mid \kappa(t, r)<\Gamma \varepsilon_4\}$, {where $\Gamma>1$ is as in \eqref{eq:GammaEstimate1/2}.} First we observe $r_t<\infty$ since $\int\abs{A_t}^2 \diff \mu_t \geq 8\pi$ {by \eqref{eq:A^2GaussBonnet} using $\CalW(f_t)\geq 4\pi$ and $\chi(\Sigma)\leq 2$.}
	Considering $\mu_t$ as a measure on $\R^3$, we find that $A_t\in L^2(\diff\mu_t)$ and hence, by absolute continuity of the integral, there exists $\delta>0$ such that if an open set $B\subset \R^3$ satisfies $\mu_t(B)<\delta$ we may conclude $\int_{B}\abs{A_t}^2\diff\mu_t <\Gamma\varepsilon_4$. Taking $0<r$ small enough, we have using Simon's monotonicity formula \eqref{eq:SimonMono 3}
	\begin{align}
	\mu_t(B_{r}(x)) \leq C({\CalW}(f_0))r^2<\delta,
	\end{align}
	and hence $r_t\geq r>0$. By \eqref{eq:kappa continuity}, we easily find $\kappa(t,r_t)\leq \Gamma\varepsilon_4$. For the estimate from below assume $\kappa(t, r_t)<\varepsilon_4$, then by \eqref{eq:kappa continuity} we have $\limsup_{r\searrow r_t} \kappa(t,r) <\Gamma\varepsilon_4$, a contradiction to $r_t$ being the supremum. Therefore, taking $\varepsilon_4\defeq \min\{\frac{\varepsilon_3}{\Gamma}, \frac{8\pi}{\Gamma}\}>0$ yields the claim.
\end{proof}
}

\add{We can choose $(t_j)_{j\in\N}, (r_j)_{j\in \N}, (x_j)_{j\in \N}$ such that the concentration limit is nontrivial, even if $T=\infty$. The argument is exactly as in \cite[p.~348--349]{KSRemovability}, so the proof can safely be omitted.}
\begin{prop}\label{prop:nontrivial blow up}
	Let $f\colon[0,T)\times \Sigma\to\R^3$ be a volume-preserving Willmore flow \add{with $0<T\leq \infty$}. Then, 
	\add{we can choose}\delete{there exist} sequences $t_j\nearrow T$, $(r_j)_{j\in \N}\subset(0,\infty)$ and $(x_j)_{j\in \N}\subset \R^3$ \add{satisfying \eqref{eq:blowup assumption}} such that the concentration limit $\hat{f}\colon\hat{\Sigma}\to\R^3$ from \Cref{thm:BlowUpExistence} satisfies
	\begin{align}\label{eq:limit concentration in 0}
		\int_{\overline{B_1(0)}}\abs{A_{\hat{f}}}^2\diff \mu_{\hat{f}}>0,
	\end{align}
	in particular $\hat{\Sigma}\neq \emptyset$.
\end{prop}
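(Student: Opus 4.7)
The plan is to exploit \Cref{lem:existence of r_t} at the later time $s_j \defeq t_j + r_j^4 \hat c$, so that the curvature concentration in the rescaled blow-up $\hat f_j$ is automatically located at the origin. Pick any $s_j \nearrow T$ and set $\tilde r_j \defeq r_{s_j}$; by \Cref{lem:existence of r_t} we have $\varepsilon_4 \leq \kappa(s_j, \tilde r_j) \leq \varepsilon_3$, and since the surface $f(s_j, \Sigma)$ is compact, the supremum in $\kappa(s_j, \tilde r_j)$ is attained at some $x_j \in \R^3$, giving $\int_{B_{\tilde r_j}(x_j)} \abs{A_{s_j}}^2 \diff \mu_{s_j} \geq \varepsilon_4$. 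I would then set $t_j \defeq s_j - \tilde r_j^4 \hat c$, which lies in $[0, T)$ for $j$ sufficiently large.

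To invoke \Cref{thm:BlowUpExistence} with $r_j \defeq \tilde r_j$, the hypothesis $\kappa(t_j, r_j) \leq \varepsilon_3$ must be verified. Since $\kappa(t_j, \cdot)$ is monotone increasing in $r$ and $\kappa(t_j, r_{t_j}) \leq \varepsilon_3$ by \Cref{lem:existence of r_t}, it suffices to have $\tilde r_j \leq r_{t_j}$. I would secure this comparison along a subsequence by analyzing the behavior of $r_t$ as $t \to T$: for $T < \infty$ the lifespan bound \Cref{thm:Lifespan} forces $r_t \to 0$, and a pigeonhole argument on the two null sequences $\tilde r_j, r_{t_j}$ yields the desired subsequence; an analogous analysis treats the case $T = \infty$. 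An alternative is to replace $r_j$ by $\min(\tilde r_j, r_{t_j})$ and absorb the resulting loss of concentration into the $\Gamma$-covering constant inherent in \Cref{lem:existence of r_t}.

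With the hypothesis verified, \Cref{thm:BlowUpExistence} produces the constrained Willmore limit $\hat f$, and the rescaling is arranged so that
\[ \int_{\overline{B_1(0)}} \abs{A_{\hat f_j}}^2 \diff \mu_{\hat f_j} = \int_{\overline{B_{r_j}(x_j)}} \abs{A_{s_j}}^2 \diff \mu_{s_j} \geq \varepsilon_4. \]
The smooth convergence $\hat f_j \to \hat f$ on compact subsets of $\R^3$ after reparametrization, together with the reparametrization-invariance of the curvature measure $\abs{A}^2 \diff \mu$, transfers the inequality to the limit, giving $\int_{\overline{B_1(0)}} \abs{A_{\hat f}}^2 \diff \mu_{\hat f} \geq \varepsilon_4 > 0$. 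The main obstacle will be the non-monotonicity of $r_t$ in $t$, which necessitates the subsequence extraction for the comparison $\tilde r_j \leq r_{t_j}$; the transfer step at the end is otherwise routine.
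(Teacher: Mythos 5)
Your overall strategy is the same as the paper's (use \Cref{lem:existence of r_t} to locate a definite amount of curvature at the later time $t_j+\hat c r_j^4$, rescale, and pass the lower bound to the limit via smooth convergence on compacts), but the crucial step is left open: the comparison between the concentration radius at the later time and the radius used for the blow-up. Your "pigeonhole argument on the two null sequences $\tilde r_j, r_{t_j}$" does not establish $\tilde r_j\le r_{t_j}$ along a subsequence — two sequences tending to zero can of course satisfy $\tilde r_j> r_{t_j}$ for all $j$, and here the two sequences are coupled through the flow in a way that pigeonholing cannot resolve. The case $T=\infty$, where $r_t$ need not tend to zero at all (it can tend to infinity, which is the blow-down regime), is not "analogous" and is not addressed. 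Your fallback of replacing $r_j$ by $\min(\tilde r_j,r_{t_j})$ also fails: the covering estimate $\kappa(t,Mr)\le N(M)\kappa(t,r)$ has a constant $N(M)$ that blows up as $M\to\infty$, so without an a priori bound on the ratio $\tilde r_j/r_{t_j}$ you cannot absorb the loss of concentration into a fixed covering constant. There is the additional wrinkle that even granting $\tilde r_j\le M r_{t_j}$ for fixed $M$, your backward set-up only yields $\kappa(t_j,\tilde r_j)\le N(M)\varepsilon_3$ rather than $\le\varepsilon_3$, so the hypothesis of \Cref{thm:BlowUpExistence} is not verified as stated.

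The missing ingredient is precisely the paper's claim \eqref{eq:annals 5.24}, namely $\liminf_{t} r_{t+\hat c r_t^4}/r_t<\infty$, which controls the ratio of the two radii along a sequence $t_j\nearrow T$ and covers both $T<\infty$ and $T=\infty$ simultaneously. Its proof is genuinely quantitative: one shows $r_t\le\tfrac12\diam f_t(\Sigma)$ (otherwise $\int|A|^2\diff\mu\le\varepsilon_3<8\pi$, contradicting \eqref{eq:A^2GaussBonnet}), combines this with Simon's diameter bound and the area growth estimate $\CalA(f_\tau)\le\CalA(f_0)+C\tau^{1/2}$ coming from the energy dissipation to get $r_t^4\le C(1+t)$, and then derives a contradiction between the geometric growth $r_{\hat t_j}\ge M^j r_{\hat t_0}$ (which would follow if the liminf were infinite) and the at-most-exponential growth of $1+\hat t_j$. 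With that estimate in hand one works forward from $t_j$ with $r_j=r_{t_j}$ (so $\kappa(t_j,r_j)\le\varepsilon_3$ is immediate from \Cref{lem:existence of r_t}) and uses the covering constant $N(M)$ for the now-fixed ratio $M$ to get $\kappa(t_j+\hat c r_j^4,r_j)\ge\varepsilon_4/N$. You should supply this step; the final transfer to the limit is, as you say, routine.
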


\delete{
\begin{proof}
	We follow \cite[p. 349]{KSRemovability} and first claim that
	\begin{align}
		\label{eq:annals 5.24}
		\liminf_{t\to\infty}\frac{r_{t+\hat{c}r_t^4}}{r_t}<\infty.
	\end{align}
	Let $M>0$ and suppose that $r_{t+\hat{c}r_t^4}\geq M r_t$ for all $t\geq t_0$. Define $\hat{t}_0=t_0$ and $\hat{t}_{j+1}\defeq \hat{t}_j + \hat{c}r_{\hat{t}_j}$ for all $j\in \N$. Then we have $r_{\hat{t}_{j}}\geq M^{j}r_{\hat{t}_0}$. Now, fix $t\in [0,T)$. If $r_t>\frac{\diam f_t(\Sigma)}{2}$ was satisfied, we could find a ball $B_r(x)$ with $r<r_t$ such that $f_t(\Sigma)\subset B_r(x)$. But then 
	\begin{align}
		\int_{\Sigma}\abs{A_t}^2\diff\mu_t = \int_{B_r(x)}\abs{A_t}^2\diff \mu_t \leq \kappa(t, r_t) \leq \varepsilon_3\leq \bar{\varepsilon}\leq\varepsilon_2<8\pi,
	\end{align}
	{using that $\varepsilon_3=\bar{\varepsilon}\hat{c}$ with $\hat{c}\in (0,1)$, the definition $\bar{\varepsilon}$ in the proof of \Cref{thm:Lifespan} and the fact that $\varepsilon_2\in (0,8\pi)$ by \Cref{prop:lambda4/3Nonconcentration}.}
	However, $\int_{\Sigma}\abs{A_t}^2\diff\mu_t \geq 8\pi$ by \eqref{eq:A^2GaussBonnet}, a contradiction. Therefore, we have $r_t\leq \frac{\diam(f_t(\Sigma))}{2}$, and thus by \add{Simon's diameter bound \cite{SimonWillmore}}, we conclude
	\begin{align}
		r_t \leq \frac{1}{\pi}\CalW(f_0)^{\frac{1}{2}}\CalA(f_t)^{\frac{1}{2}} \quad\text{for all }t\in [0,T).
	\end{align}
	On the other hand for all $0\leq \tau <T$ using \eqref{eq:dtW<=0} and \eqref{eq:WvstildeW} we find
	\begin{align}
		\CalA(f_\tau)-\CalA(f_0) &= -\int_0^{\tau}\int_{\Sigma} \langle H_{f_t}, \partial_t f\rangle\diff \mu_t \diff t \\
		&\leq \tau^{\frac{1}{2}}\left(\int_{\Sigma} \abs{H_{f_0}}^2\diff \mu_{f_0}\right)^{\frac{1}{2}} \left(\int_0^{\tau} \int_{\Sigma}\abs{\partial_t f}^2 \diff \mu_t \diff t\right)^{\frac{1}{2}} \\
		&\leq C({\CalW}(f_0)) \tau^{\frac{1}{2}},
	\end{align}
	with a similar argument as in the proof of \Cref{prop:Lambda^2AVpWF}.	Consequently, we have 
	\begin{align}
		r_t^4 &\leq C({\CalW}(f_0),\CalA(f_0)) (1+t)\\
		1+\hat{t}_{j+1}&\leq 1+\hat{t}_j+\hat{c}r_{\hat{t}_j}^4 \leq \big(1+C({\CalW}(f_0), \CalA(f_0))\big)(1+\hat{t}_j) \\
		0<r_{\hat{t}_0}^4M^{4j}&\leq r_{\hat{t}_j}^4\leq \big(1+C({\CalW}(f_0), \CalA(f_0))\big)^{j}(1+\hat{t}_0).
	\end{align}
	Taking the limit $j\to\infty$ yields $M^4 \leq 1+C({\CalW}(f_0), \CalA(f_0))$ and \eqref{eq:annals 5.24} is proven. Consequently, there exist a sequence $t_j\nearrow T$ such that $r_{t_j+\hat{c}r_j^4}\leq M r_j$ for some $M>0 $. By a covering argument there exist $N=N(M)\in\N$ such that
	\begin{align}
		\kappa(t,Mr) \leq N\kappa(t,r) \quad\text{for all }t\in [0,T), r>0,
	\end{align}
	cf. \eqref{eq:GammaEstimate1/2}. Thus, using \Cref{lem:existence of r_t} we find
	\begin{align}
		\kappa(t_j+\hat{c}r_{t_j}^4,r_{t_j})\geq \frac{1}{N}\kappa(t_j+\hat{c}r_{t_j}^4, Mr_{t_j}) \geq \frac{1}{N} \kappa(t_j+\hat{c}r_{t_j}^4,r_{t_j+\hat{c}r_{t_j}^4}) \geq \frac{ \varepsilon_4}{N}.
	\end{align}
	Now, we pick $x_j \in \R^3$ such that 
	\begin{align}
		\int_{B_1(0)}\abs{A_{\hat{f}_j}}^2\diff\mu_{\hat{f}_j}=\int_{B_{r_{t_j}}(x_j)}\abs{A_{f_{t_j+\hat{c}r_{t_j}^4}}}^2\diff \mu_{f_{t_j+\hat{c}r_{t_j}^4}} \geq \frac{\varepsilon_4}{2N}>0.
	\end{align}
	Taking $j\to\infty$ and using the smooth convergence on compact sets yields \eqref{eq:limit concentration in 0} for $r_j \defeq r_{t_j}$.
\end{proof}
}

\section{Convergence for compact concentration limits}\label{sec:blow up not compact}
\add{The main result of this section is the following
\begin{thm}\label{thm: no compact blowups2}
	Let $f\colon[0,T)\times \Sigma\to\R^3$ be a volume-preserving Willmore flow and let $\hat{f}\colon\hat{\Sigma}\to\R^3$ be a concentration limit with $\hat{\Sigma}\neq \emptyset$.
	If $\hat{\Sigma}$ has a compact component and
	\begin{enumerate}[(i)]
		\item $\CalV(\hat{f})\neq 0$ or
		\item $\CalV(\hat{f}) = \CalV(f_0)=0$,
	\end{enumerate}
	then $\hat{f}$ is a limit under translation. Moreover, the flow exists globally and converges, as $t\to\infty$, after reparametrization by diffeomorphisms, to a constrained Willmore immersion $f_\infty$ with $\CalW(f_\infty)=\CalW(\hat{f})$.
\end{thm}
}
\delete{
\begin{thm}\label{thm: no compact blowups}
	Let $f\colon[0,T)\times \Sigma\to\R^3$ be a volume-preserving Willmore flow and let $\hat{f}\colon\hat{\Sigma}\to\R^3$ be a concentration limit with $\hat{\Sigma}\neq \emptyset$.
	If $\hat{\Sigma}$ has a compact component and
	\begin{enumerate}[(i)]
		\item $\CalV(\hat{f})\neq 0$ or
		\item $\CalV(\hat{f}) = \CalV(f_0)=0$,
	\end{enumerate}
	then $\hat{f}$ is a limit under translation.
\end{thm}
}
\add{
Under certain assumptions, the first part of the statement can also be directly obtained from the scaling behavior of the volume.
\begin{remark}
	Under the assumption that $\hat{\Sigma}$ is compact, we have
		$\CalV(\hat{f})=\lim_{j\to\infty}r_j^{-3} V_0$
	which immediately yields that
	\begin{enumerate}[(i)]
		\item if $\CalV(\hat{f})\neq 0$, then $\hat{f}$ cannot be a blow-up or a blow-down;
		\item if $V_0\neq 0$, then $\hat{f}$ cannot be a blow-up.
	\end{enumerate}
	Clearly, these arguments fail if $\CalV(\hat{f})=V_0=0$.
\end{remark}
The key ingredient to prove the powerful convergence result \Cref{thm: no compact blowups2} relies on a suitable extension of the \emph{{\L}ojasiewicz--Simon gradient inequality.}}
\subsection{The constrained \texorpdfstring{{\L}ojasiewicz}{Lojasiewicz}--Simon gradient inequality}

In this subsection, we will state and prove a \emph{constrained} or \emph{refined} {\L}ojasiewicz--Simon gradient inequality, cf.\ \cite{Rupp}, for the volume-preserving Willmore flow. A similar result for the length-preserving elastic flow of curves was recently proven in \cite{RuppSpener}.

The strategy is the same as in \cite[Section 3]{CFS09}. First, in order to get rid of the invariance of the Willmore and volume energy, we restrict ourselves to normal variations. 
Throughout this section we will fix some smooth immersion ${f}\colon\Sigma\to\R^3$. The \emph{normal Sobolev spaces} along $f$ are defined by
\begin{align}
	W^{k,2}(\Sigma;\R^3)^{\perp}\defeq \{\phi\in W^{k,2}(\Sigma;\R^3)\mid P^{\perp}\phi=\phi\},
\end{align}
for $k\in \N_0$, with $L^2(\Sigma;\R^3)^{\perp}\defeq W^{0,2}(\Sigma;\R^3)^{\perp}$. Note that $L^{2}(\Sigma;\R^3)^{\perp}$ is a Hilbert space with inner product
\begin{align}\label{eq: L^2 mu_f}
	\langle \phi_1, \phi_2\rangle_{L^{2}(\Sigma;\R^3)^{\perp}} = \int_{\Sigma} \langle \phi_1, \phi_2\rangle\diff \mu_f\quad \text{ for }\phi_1, \phi_2 \in L^{2}(\Sigma;\R^3)^{\perp}.
\end{align}
\begin{remark}\label{rem:vector vs scalar Sobolev}
	\begin{enumerate}[(i)]
		\item 
		Note that since we are in codimension one, we have
		\begin{align}
		W^{k,2}(\Sigma;\R^3)^{\perp} = \{u\nu_f\mid u\in W^{k,2}(\Sigma)\},
		\end{align}
		for $k\in \N_0$, where $\nu_{f}$ is the unit normal to $f$ and $W^{k,2}(\Sigma)\defeq W^{k,2}(\Sigma;\R)$. In fact, the map $W^{k,2}(\Sigma)\to W^{k,2}(\Sigma;\R^3)^{\perp },u\mapsto\phi = u\nu_f$ is an isomorphism of Banach spaces and for $k=0$ an isometry between the Hilbert spaces $L^2(\Sigma)$ and $L^2(\Sigma;\R^3)^{\perp}$.
		\item Since $\Sigma$ is compact, the spaces $W^{k,2}(\Sigma;\R^3)$ and $L^2(\Sigma;\R^3)$ do not depend on the metric, cf. \cite[Theorem 2.20]{Aubin}.
	\end{enumerate}
\end{remark}

First, we prove a constrained {\L}ojasiewicz--Simon gradient inequality in normal directions in a neighborhood of a constrained Willmore immersion, i.e.\ a solution to \eqref{eq:stationary}.
\begin{prop}\label{prop:Loja Normal}
	Let $f\colon\Sigma\to\R^3$ be a smooth constrained Willmore immersion. Then, there exists $C, \sigma>0$ and $\theta\in (0, \frac{1}{2}]$ such that for all  $\phi\in W^{4,2}(\Sigma;\R^3)^{\perp}$ with $\norm{\phi}{W^{4,2}}\leq \sigma$ and $\CalV(f+\phi)=\CalV(f)$ we have
	\begin{align}
		\abs{\overline{\CalW}(f+\phi)-\overline{\CalW}(f)}^{1-\theta}\leq C\norm{\nabla\overline{\CalW}(f+\phi)- \lambda(f+\phi)\nu_{f+\phi}}{L^2(\diff\mu_{f+\phi})},
	\end{align}
	where $\lambda$ is as in \eqref{eq:deflambda}.
\end{prop}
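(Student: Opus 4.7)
The plan is to reduce the statement to the abstract constrained {\L}ojasiewicz--Simon inequality developed in \cite{Rupp}. First, I would parametrize normal perturbations by scalar functions via the isomorphism from \Cref{rem:vector vs scalar Sobolev}(i): elements $\phi \in W^{4,2}(\Sigma;\R^3)^{\perp}$ correspond bijectively to $u \in W^{4,2}(\Sigma)$ through $\phi = u\nu_f$. Since $\dim \Sigma = 2$, the Sobolev embedding $W^{4,2}(\Sigma) \hookrightarrow C^{2}(\Sigma)$ guarantees that $f_u \defeq f + u \nu_f$ is a smooth immersion provided $\norm{u}{W^{4,2}}$ is sufficiently small. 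It thus suffices to establish the inequality for the pullback functionals
\begin{align*}
\CalE(u) \defeq \overline{\CalW}(f_u), \qquad \mathcal{G}(u) \defeq \CalV(f_u) - \CalV(f),
\end{align*}
defined on a neighborhood of $0 \in W^{4,2}(\Sigma)$.

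Next, I would verify the hypotheses of the abstract theorem. Both $\CalE$ and $\mathcal{G}$ are real-analytic in $u$, since they are polynomial expressions in the metric $g_{f_u}$, its inverse, the normal $\nu_{f_u}$, and the second fundamental form $A_{f_u}$, all of which depend analytically on $u$. The constraint $\mathcal{G}$ has nonvanishing derivative at $0$: by \Cref{lem:first Vari V and W}, $\mathcal{G}'(0)v = -\int v\diff \mu_f$ for $v \in W^{4,2}(\Sigma)$, so $\{\mathcal{G} = 0\}$ is a real-analytic Banach submanifold of codimension one near the origin. Since $f$ solves \eqref{eq:stationary}, the second Fréchet derivative $\CalE''(0)$, viewed as an operator from $W^{4,2}(\Sigma)$ to $L^{2}(\Sigma)$, has leading term $\Delta_f^{2}$ plus lower-order perturbations; it is thus elliptic on the closed surface $\Sigma$ and hence Fredholm of index zero. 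These are precisely the assumptions of the constrained {\L}ojasiewicz--Simon theorem in \cite{Rupp}.

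Applying that abstract result yields constants $C, \sigma > 0$ and $\theta \in (0, \tfrac{1}{2}]$ such that for all $u$ with $\norm{u}{W^{4,2}} \leq \sigma$ and $\mathcal{G}(u) = 0$,
\begin{align*}
\abs{\CalE(u) - \CalE(0)}^{1-\theta} \leq C\,\Norm{\nabla_{L^{2}(\diff \mu_{f_u})}\CalE(u) - \Lambda_u \nabla_{L^{2}(\diff \mu_{f_u})}\mathcal{G}(u)}{L^{2}(\diff \mu_{f_u})},
\end{align*}
where $\Lambda_u$ is the Lagrange multiplier arising from the abstract projection onto the tangent space of the constraint. The final step is to identify these abstract quantities with the geometric ones: by \Cref{lem:first Vari V and W} the $L^{2}(\diff \mu_{f_u})$-gradients agree with $\nabla\overline{\CalW}(f_u)$ and $-\nu_{f_u}$, respectively, and $\Lambda_u$ coincides with $\lambda(f_u)$ from \eqref{eq:deflambda} after integration by parts on the closed surface $\Sigma$.

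The main obstacle is the bookkeeping required to pass from the abstract setting---where norms and gradients are taken with respect to a fixed background structure on $\Sigma$---to the geometric formulation, in which the measure $\mu_{f_u}$ itself depends on $u$. This is handled by the uniform equivalence $\mu_{f_u} \asymp \mu_f$ for small $u$, which allows Jacobian factors to be absorbed into the constant $C$. A secondary subtlety is to check that the projection $\nabla\overline{\CalW}(f_u) - \lambda(f_u)\nu_{f_u}$ appearing on the right-hand side is precisely the geometric realization of the tangential gradient on the constraint manifold produced by the abstract theorem.
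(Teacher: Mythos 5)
Your proposal follows essentially the same route as the paper: both reduce the statement to the abstract constrained {\L}ojasiewicz--Simon inequality of \cite[Corollary 5.2]{Rupp}, verify analyticity and the Fredholm property of the second variation of $\overline{\CalW}$, and then translate the abstract projected gradient back into the geometric quantity $\nabla\overline{\CalW}(f+\phi)-\lambda(f+\phi)\nu_{f+\phi}$, absorbing the Jacobian $\rho_{f+\phi}$ into the constant. Working with scalar functions $u$ instead of normal fields $\phi=u\nu_f$ is only a cosmetic difference, by \Cref{rem:vector vs scalar Sobolev}.

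One hypothesis of the abstract theorem is left unverified in your write-up: \cite[Corollary 5.2]{Rupp} also requires that the second variation of the \emph{constraint} at $0$, i.e. the Fr\'echet derivative of $u\mapsto \nabla_{L^2}\mathcal{G}(u)$ at $u=0$, be a \emph{compact} operator $W^{4,2}(\Sigma)\to L^2(\Sigma)$; this is what guarantees that the projected (constrained) second variation of $\CalE$ remains Fredholm of index zero. The paper checks this in \Cref{lem:variations}: differentiating $-\nu_{f+t\phi}\rho_{f+t\phi}$ gives $\grad_g\langle\phi,\nu_f\rangle+\nu_f\langle H_f,\phi\rangle$, which is only of first order in $\phi$, so compactness follows from Rellich--Kondrachov. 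This is an easy fix but should be stated. A second, minor point: rather than identifying the abstract Lagrange multiplier $\Lambda_u$ with $\lambda(f_u)$ exactly, it is cleaner (and is what the paper does) to note that the projection $P_\phi$ annihilates $\nabla_{L^2}\mathcal{G}$ and is a contraction, so that $\norm{P_\phi\nabla\CalE(u)}{L^2}\leq\norm{\nabla\CalE(u)+c\,\nabla\mathcal{G}(u)}{L^2}$ for \emph{any} $c\in\R$, in particular for $c=\lambda(f_u)$ as in \eqref{eq:deflambda}; your remark about integration by parts shows you are aware of why this choice of $c$ is the natural one.
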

{
\Cref{prop:Loja Normal} will follow from \cite{CFS09} and \cite[Corollary 5.2]{Rupp}. To that end, we need to show the analyticity of certain maps and study their second variations. Most of the results will follow from \cite{CFS09} in the case of codimension one, only the volume needs to be studied in detail.
}
{
\begin{lem}\label{lem:unit normal props}
	Let $U\defeq B_\rho(0)\subset W^{4,2}(\Sigma)$.
Then for $\rho>0$ small enough and writing $f_u\defeq f+u\nu_f$ for $u\in U$ we have
	\begin{enumerate}[(i)]
		\item $f_u \colon\Sigma\to\R^3$ is an immersion and $U\to W^{4,2}(\Sigma;\R^3), u\mapsto f_u$ is analytic; 
		\item the map $U\to C^{0}(\Sigma;\R^3), u\mapsto \nu_{f_u}$ is analytic.
	\end{enumerate}
\end{lem}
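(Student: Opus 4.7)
For part (i), the first observation is that $u\mapsto f+u\nu_f$ is an affine continuous -- hence real analytic -- map from $W^{4,2}(\Sigma)$ into $W^{4,2}(\Sigma;\R^3)$: the constant term $f$ lies in $C^\infty(\Sigma;\R^3)\subset W^{4,2}(\Sigma;\R^3)$, and multiplication by the smooth field $\nu_f$ defines a bounded linear operation $W^{4,2}(\Sigma)\to W^{4,2}(\Sigma;\R^3)$ on the compact manifold $\Sigma$. To show that $f_u$ remains an immersion for $\rho$ small, I would invoke the Sobolev embedding $W^{4,2}(\Sigma)\hookrightarrow C^2(\Sigma)$ (valid since $\dim\Sigma=2$): the pullback metric $g_{f_u}$ is polynomial in $u$ and $\partial u$ with $C^\infty$ coefficients, hence converges to $g_f$ uniformly on $\Sigma$ as $\norm{u}{W^{4,2}}\to 0$. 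Since $g_f$ is positive definite on the compact $\Sigma$, the same holds for $g_{f_u}$ once $\rho$ is sufficiently small.

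For part (ii), the plan is to patch a local chart description with a single nonlinear analytic operation. Cover $\Sigma$ by finitely many relatively compact coordinate charts $V_\alpha$ compatible with the orientation, so that on each
\begin{equation*}
\nu_{f_u} = \frac{\partial_1 f_u \times \partial_2 f_u}{\bigl|\partial_1 f_u \times \partial_2 f_u\bigr|}.
\end{equation*}
The numerator depends on $u$ through a composition of the affine map $u\mapsto(\partial_1 f_u,\partial_2 f_u)$ into $W^{3,2}(\Sigma;\R^3)^2\hookrightarrow C^1(\overline{V_\alpha};\R^3)^2$ and the continuous bilinear cross product, hence is analytic with values in $C^1(\overline{V_\alpha};\R^3)\hookrightarrow C^0(\overline{V_\alpha};\R^3)$. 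Shrinking $\rho$ if necessary -- using the uniform convergence argument from part (i) together with the positive lower bound of $|\partial_1 f\times\partial_2 f|$ on the compact $\overline{V_\alpha}$ -- the image lies in the open subset of $C^0(\overline{V_\alpha};\R^3)$ of maps avoiding the origin. On this set, the pointwise normalization $x\mapsto x/|x|$ induces an analytic Nemytskii operator into $C^0(\overline{V_\alpha};\R^3)$; composing shows that $\nu_{f_u}|_{V_\alpha}$ depends analytically on $u$. Since $\nu_{f_u}$ is globally well defined and the cover is finite, a standard patching argument upgrades the local statements to analyticity of $u\mapsto \nu_{f_u}$ as a map $U\to C^0(\Sigma;\R^3)$.

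I expect the main obstacle to be a clean justification of analyticity of the Nemytskii operator induced by $x\mapsto x/|x|$. This reduces to the analyticity of $g\mapsto 1/g$ on $\{g\in C^0(\overline{V_\alpha}):\inf g>0\}$, which follows from a Neumann-type series in the commutative Banach algebra $C^0(\overline{V_\alpha})$, combined with the analyticity of $g\mapsto \sqrt{g}$ on the same set; alternatively one can invoke a general composition theorem for Nemytskii operators with real-analytic symbols. Everything else -- the affine and bilinear ingredients, the Sobolev embedding, and the finite patching -- is routine once these components are in place.
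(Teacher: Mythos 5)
Your proposal is correct and follows essentially the same route as the paper: Sobolev embedding into $C^1$ to preserve the immersion property for small $\rho$, (affine) boundedness of $u\mapsto f_u$, bilinearity of the cross product, a uniform lower bound on the denominator, and analyticity of the Nemytskii operator induced by $x\mapsto x/\abs{x}$ (the paper cites \cite[Theorem 6.8]{Appell90} for this last step, which you identify as the only non-routine ingredient). Your phrasing of the map as affine rather than linear is in fact slightly more accurate than the paper's.
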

\begin{proof}
	\begin{enumerate}[(i)]
		\item Taking $\rho>0$ small enough and using the Sobolev embedding $W^{4,2}(\Sigma)\hookrightarrow C^{1}(\Sigma)$ we find that $f_u$ is an immersion for all $u\in U$. 
		The map $u\mapsto f_u$ is linear and bounded, hence analytic.
		\item In local coordinates $(y^1,y^2)$ in the orientation on $\Sigma$, by (i) and the Sobolev embedding theorem $W^{4,2}(\Sigma)\hookrightarrow  C^{1}(\Sigma)$ , the map $B_{\rho}(0)\to C^{1}(\Sigma;\R^3), u\mapsto \partial_{y^1} f_u \times \partial_{y^2} f_u$ is bilinear and bounded, hence analytic. Moreover, since $f_u$ is a $C^1$-immersion by (i), the denominator in the definition of $\nu_{f_u}$ in \eqref{eq:def normal} is uniformly bounded away from zero. Since $\R^3\setminus B_{\delta}(0)\to\R^3, x\mapsto\frac{x}{\abs{x}}$ is analytic for any $\delta>0$ the claim follows from the characterization of analytic Nemytskii operators on $C(\Sigma)$, cf. \cite [Theorem 6.8]{Appell90}. \qedhere
	\end{enumerate}
\end{proof}
}
Let
$\tilde{U}\defeq \{\phi\in W^{4,2}(\Sigma;\R^3)^{\perp}\mid \phi = u\nu_f \text{ with } u\in U\}$. By \Cref{rem:vector vs scalar Sobolev} (i), $\tilde{U}$ is open in $W^{4,2}(\Sigma;\R^3)^{\perp}$. We consider the shifted energies, defined by
\begin{align}
	&W\colon \tilde{U} \to \R, W(\phi)\defeq \overline{\CalW}(f+\phi),\\
	&V\colon \tilde{U}\to \R, V(\phi)\defeq \CalV(f+\phi).
\end{align}
\begin{lem}\label{lem:analytic}
	Under the assumptions of \Cref{lem:unit normal props}, the following maps are analytic:
	\begin{enumerate}[(i)]
		\item the function $\tilde{U}\to C^{0}(\Sigma), \phi\mapsto \rho_{f+\phi}$, where $\diff \mu_{f+\phi}=\rho_{f+\phi} \diff \mu_f$;
		\item the function $\tilde{U}\to \R, \phi\mapsto W(\phi)$;
		\item the function $\tilde{U}\to L^2(\Sigma;\R^3)^{\perp}, \phi\mapsto \add{P^{\perp}} \nabla\overline{\CalW}(f+\phi)\rho_{f+\phi}$;
		\item the function $\tilde{U}\to\R, \phi \mapsto V(\phi)$;
		\item the function $\tilde{U}\to L^2(\Sigma;\R^3)^{\perp},  \phi\mapsto \add{P^{\perp}}\left(-\nu_{f+\phi}\rho_{f+\phi}\right)$.
	\end{enumerate}
\end{lem}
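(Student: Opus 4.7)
The strategy is to reduce each item to a combination of bounded multilinear maps, analytic Nemytskii operators (square root on positive reals, matrix inversion), and the analyticity statements of \Cref{lem:unit normal props}. Throughout one works in local charts $(y^1,y^2)$ on $\Sigma$ and repeatedly invokes the Sobolev embeddings $W^{4,2}(\Sigma)\hookrightarrow C^2(\Sigma)$, $W^{3,2}(\Sigma)\hookrightarrow C^1(\Sigma)$, and $W^{2,2}(\Sigma)\hookrightarrow C^0(\Sigma)$, all of which hold since $\dim\Sigma=2$. After shrinking $\rho>0$ if necessary, one may assume that $\det g_{ij}(f+\phi)$ stays uniformly bounded away from zero for every $\phi\in\tilde{U}$, which legitimizes all subsequent matrix inversions and square roots.

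For (i), write $\rho_{f+\phi}=\sqrt{\det g_{ij}(f+\phi)/\det g_{ij}(f)}$ in local coordinates, with $g_{ij}(f+\phi)=\langle\partial_i(f+\phi),\partial_j(f+\phi)\rangle$ a bounded bilinear form $W^{4,2}(\Sigma;\R^3)^{\perp}\times W^{4,2}(\Sigma;\R^3)^{\perp}\to C^1(\Sigma)$; composing with the polynomial $\det$ and the analytic branch of $\sqrt{\cdot}$ yields an analytic map into $C^0(\Sigma)$. For (ii) and (iv) it then suffices to show that the integrands $|A^0_{f+\phi}|^2\rho_{f+\phi}$ and $\langle f+\phi,\nu_{f+\phi}\rangle\rho_{f+\phi}$ depend analytically on $\phi$ with values in $C^0(\Sigma)$, since integration against $\diff\mu_f$ is then a bounded linear functional $C^0(\Sigma)\to\R$. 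Writing $A_{ij}(f+\phi)=\langle\partial_i\partial_j(f+\phi),\nu_{f+\phi}\rangle$, one uses $\partial^2(f+\phi)\in C^0$ (via $W^{2,2}\hookrightarrow C^0$), $\nu_{f+\phi}\in C^0$ analytically by \Cref{lem:unit normal props}(ii), and $g^{ij}(f+\phi)\in C^0$ analytically by matrix inversion to conclude. Item (v) is immediate since the product of two analytic maps into $C^0(\Sigma;\R^3)$ is analytic into $C^0\hookrightarrow L^2$.

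The technical heart of the proof lies in (iii). In local coordinates $\nabla_{sc}\overline{\CalW}(f+\phi)=\Delta H+|A^0|^2H$ is a finite sum of monomials in $g^{ij}(f+\phi)$, $\nu_{f+\phi}$, and partial derivatives $\partial^{\alpha}(f+\phi)$ with $|\alpha|\le 4$. A careful inspection, entirely analogous to the one carried out for the pure Willmore gradient in \cite[Sec.~3]{CFS09}, shows that in each such monomial at most one factor has derivative order strictly greater than two; all remaining factors are polynomial combinations of $g^{ij}(f+\phi)$, $\nu_{f+\phi}$, and derivatives of $f+\phi$ of order at most two, which are analytic $\tilde{U}\to C^0(\Sigma)$ by the arguments given for (i) and (ii). Since any third- or fourth-order derivative of $f+\phi$ lies in $L^2(\Sigma)$ and $C^0\cdot L^2\to L^2$ is bounded bilinear, each monomial depends analytically on $\phi$ with values in $L^2(\Sigma)$. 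Multiplying by $\nu_{f+\phi}\rho_{f+\phi}\in C^0$ then gives analyticity $\tilde{U}\to L^2(\Sigma;\R^3)^{\perp}$.

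The main obstacle is precisely the bookkeeping required in (iii): one must expand $\Delta H$ sufficiently to expose the linear appearance of the fourth-order term in $\phi$ and the $C^0$-character of all coefficient factors, so that the multiplication from $C^0\cdot L^2$ into $L^2$ remains legitimate. This has been worked out in detail in \cite[Sec.~3]{CFS09}; the constrained setting here contributes only the volume-related and normal-related items (iv) and (v), which involve at most first derivatives of $\phi$ and are therefore comparatively straightforward.
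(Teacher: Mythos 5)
Your proof is correct, but it is organized rather differently from the paper's: the paper disposes of (i)--(iii) almost entirely by citation to \cite[Lemma 3.2]{CFS09} (items (vii), (iv)--(vi) there) and only supplies an original argument for the volume-related items (iv) and (v), via the analyticity of $u\mapsto \nu_{f_u}$ from \Cref{lem:unit normal props}; you instead re-derive the content of those cited results in local coordinates. Your self-contained version is sound: the density $\rho_{f+\phi}=\sqrt{\det g(f+\phi)/\det g(f)}$ and the curvature quantities of order $\le 2$ are indeed compositions of bounded multilinear maps with analytic Nemytskii operators (inversion, square root, $x\mapsto x/\abs{x}$) on $C^0(\Sigma)$, using $W^{4,2}(\Sigma)\hookrightarrow C^2(\Sigma)$ and $W^{2,2}(\Sigma)\hookrightarrow C^0(\Sigma)$ in dimension two; and your key structural claim in (iii) --- that each monomial in the local expansion of $\Delta H+\abs{A^0}^2H$ contains at most one factor involving a derivative of $f+\phi$ of order three or four --- checks out by a Leibniz count (the two derivatives of $\Delta$ distributed over the factors $g^{k\ell}$, $\partial^2(f+\phi)$, $\nu$ of $H$ can raise at most one factor above order two), so the bounded bilinear multiplication $C^0\cdot L^2\to L^2$ together with the affine dependence of $\partial^3(f+\phi),\partial^4(f+\phi)$ on $\phi$ gives analyticity into $L^2$. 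What your version buys is transparency about exactly where the fourth-order term enters linearly; what the paper's version buys is brevity and a clean separation between what is already in \cite{CFS09} and what is new (the volume terms, which involve only first derivatives of $\phi$). Two cosmetic points worth tightening: a compact finite atlas with a partition of unity should be invoked to pass from local-chart analyticity to analyticity of the globally defined objects, and strictly speaking the maps in (iii) and (v) land in $L^2(\Sigma;\R^3)$ and are viewed in $L^2(\Sigma;\R^3)^{\perp}$ only after composing with the bounded linear projection $P^{\perp_f}$, which of course preserves analyticity (the paper shares this imprecision).
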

\begin{proof}
	Statement (i) is  \cite[Lemma 3.2 (vii)]{CFS09} and
	(ii) follows from \cite[Lemma 3.2 (iv) and (vii)]{CFS09}.
	
	By \cite[Lemma 3.2 (v) and (vi)]{CFS09}, $\tilde{U}\to L^2(\Sigma;\R^3)^{\perp}, \phi\mapsto\nabla\overline{\CalW}(f+\phi)$ is analytic, and hence (iii) follows from (i).
	
	 Note that by \Cref{rem:vector vs scalar Sobolev}, $\tilde{U}\to U, \phi\mapsto u = \langle\phi, \nu_f\rangle$ is linear and bounded, thus analytic. Therefore, $\tilde{U}\to  C^{0}(\Sigma;\R^3), \phi\mapsto\nu_{f+\phi}$ is analytic, hence so is $V$ and by (i) statement (v) follows.
 \end{proof}
As a last missing ingredient towards proving the constrained {\L}ojasiewicz--Simon gradient inequality, we compute the first and second variations.

\begin{lem}\label{lem:variations}
	Let $H\defeq L^2(\Sigma;\R^3)^{\perp}$ . Under the assumption of \Cref{lem:unit normal props}, for each $\phi\in \tilde{U}$, the $H$-gradients of $W$ and $V$ are given by
	\begin{align}
		\nabla_{H}W(\phi) &= \add{P^{\perp}} \nabla\overline{\CalW}(f+\phi)\rho_{f+\phi},\\
		\nabla_{H}V(\phi) &= \add{P^{\perp}}\left( -\nu_{f+\phi}\rho_{f+\phi}\right).\label{eq:WV H grads}
	\end{align}
	Moreover, the Fréchet-derivatives of the $H$-gradient maps of $W$ and $V$ at $u=0$ satisfy
	\begin{align}
		\left(\nabla_{H}W\right)^{\prime}(0) &\colon W^{4,2}(\Sigma,\R^3)^{\perp}\to L^2(\Sigma,\R^3)^{\perp} \quad\text{ is a Fredholm operator with index zero,} \\
		\left(\nabla_H V\right)^{\prime}(0)&\colon W^{4,2}(\Sigma,\R^3)^{\perp}\to L^2(\Sigma,\R^3)^{\perp} \quad\text{ is compact.}
	\end{align}
\end{lem}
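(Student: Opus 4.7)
The plan is to address the three claims in sequence. For the gradient identities \eqref{eq:WV H grads}, I would combine \Cref{lem:first Vari V and W} applied to the immersion $f+\phi$ with the change of measure $\diff\mu_{f+\phi}=\rho_{f+\phi}\diff\mu_f$ together with the diffeomorphism invariance of $\overline{\CalW}$ (which ensures the tangential part of a test variation contributes nothing). For any $\varphi \in H$ this gives
\[
W'(\phi)\varphi = \int\langle \nabla\overline{\CalW}(f+\phi),\varphi\rangle\diff\mu_{f+\phi} = \int\langle \nabla\overline{\CalW}(f+\phi)\rho_{f+\phi},\varphi\rangle\diff\mu_f,
\]
and matching against the definition \eqref{eq: L^2 mu_f} of the $H$-inner product yields the formula for $\nabla_H W(\phi)$; the formula for $\nabla_H V$ follows in the same way from \eqref{eq:1 Vari Vol}.

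For the Fredholm property of $(\nabla_H W)'(0)$, I would compute the linearization of $\phi \mapsto \nabla\overline{\CalW}(f+\phi)\rho_{f+\phi}$ at $\phi = 0$ in order to identify its principal symbol. By the Banach-space isomorphism $W^{4,2}(\Sigma) \cong W^{4,2}(\Sigma;\R^3)^{\perp}$ via $u \mapsto u\nu_f$ from \Cref{rem:vector vs scalar Sobolev}(i), it suffices to study normal variations $\varphi = u\nu_f$. The evolution equation \eqref{eq:dtH} applied with $\xi = u$ yields that the linearization of $H_{f+tu\nu_f}$ at $t=0$ equals $\Delta u + (|A^0|^2 + \tfrac{1}{2}H^2) u$, while \eqref{eq:dtg} shows that the linearizations of $g^{ij}$, of the Christoffel symbols, and of the density $\rho_{f+tu\nu_f}$ only involve $u$ and its first derivatives. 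A routine chain-rule bookkeeping then yields
\[
(\nabla_H W)'(0)[u\nu_f] = \bigl(\Delta^2 u + P u\bigr)\nu_f,
\]
where $P$ is a linear differential operator on $\Sigma$ of order at most three. Thus $(\nabla_H W)'(0)$ is a fourth-order linear elliptic operator with scalar principal symbol $|\xi|^4$ acting between Sobolev spaces over the closed surface $\Sigma$, and hence Fredholm of index zero by standard elliptic theory on closed manifolds (cf.\ the analogous \cite[Lemma 3.3]{CFS09}).

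For the compactness of $(\nabla_H V)'(0)$, the crucial observation is that $\phi \mapsto \nu_{f+\phi}\rho_{f+\phi}$ is a first-order differential expression in $\phi$: the cross-product formula \eqref{eq:def normal} for $\nu_{f+\phi}$ and the identity $\rho_{f+\phi} = \sqrt{\det g_{f+\phi}/\det g_f}$ both depend on $\phi$ only through $\phi$ and its first partial derivatives. Hence $(\nabla_H V)'(0)$ factors as a bounded linear map $W^{4,2}(\Sigma;\R^3)^{\perp} \to W^{3,2}(\Sigma;\R^3)$ followed by the Rellich--Kondrachov compact embedding into $L^2(\Sigma;\R^3)$ and, if required, the bounded orthogonal projection onto $L^2(\Sigma;\R^3)^{\perp}$; since the composition of a bounded with a compact operator is compact, the claim follows.

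The main technical obstacle is the principal-symbol bookkeeping for $(\nabla_H W)'(0)$: one has to isolate the unique fourth-order contribution $\Delta^2 u$ in the linearization of the quasilinear expression $\Delta H + |A^0|^2 H$ and verify that every other term arising from linearizing the metric, the connection, and the area element is of strictly lower order in $u$. Once this is done, the abstract conclusions -- Fredholm of index zero and compactness, respectively -- follow from fully standard Sobolev and elliptic-PDE arguments on the closed surface $\Sigma$.
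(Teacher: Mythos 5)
Your proposal is correct and follows essentially the same route as the paper: the gradient identities come from testing the first variations of \Cref{lem:first Vari V and W} against the density $\rho_{f+\phi}$, the Fredholm property is reduced to the fourth-order elliptic linearization already treated in \cite[Lemma 3.3]{CFS09}, and compactness of $(\nabla_H V)'(0)$ follows because the linearization is only of first order in $\phi$ together with Rellich--Kondrachov. The only cosmetic difference is that you sketch the principal-symbol bookkeeping that the paper outsources entirely to the cited reference.
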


\begin{proof}
	For $\phi,\psi\in \tilde{U}$, we have by the first variation of the Willmore energy and \eqref{eq: L^2 mu_f}
	\begin{align}
		\dtzero W(\phi+t\psi) = \int_{\Sigma}\langle \nabla\overline{\CalW}(f+\phi), \psi\rangle\diff \mu_{f+\phi} = \left\langle \add{P^{\perp}} \nabla\overline{\CalW}(f+\phi)\rho_{f+\phi}, \psi\right\rangle_{H},
	\end{align}
	\add{where we also used $\psi = P^{\perp}\psi$.}

	Similarly, $\dtzero V(\phi+t\psi) = -\int_{\Sigma} \langle\nu_{f+\phi}, \psi\rangle\diff \mu_{f+\phi} = -\left\langle P^{\perp}\nu_{f+\phi} \rho_{f+\phi}, \psi\right\rangle_H$. The Fredholm property of $(\nabla_HW)^{\prime}(0)$ follows from \eqref{eq:WvstildeW} and \cite[Lemma 3.3 and p. 356]{CFS09}. For the last statement, we use \eqref{eq:dtnu} and \Cref{rem:vector vs scalar Sobolev} (i) to obtain for $\phi = u\nu_f$
	\begin{align}
		\dtzero \nu_{f+t\phi} = - \grad_g u = - \grad_g \langle \phi, \nu_f\rangle.
	\end{align}
Now, by \eqref{eq:dtdmu}, we find $\dtzero\rho_{f+t\phi} \diff \mu_f = \dtzero(\diff \mu_{f+{t\phi}}) = -\langle H_{f}\nu_f, \phi\rangle\diff \mu_f$.  Using \eqref{eq:WV H grads} we obtain, \add{since the gradient term is tangential,}
	\begin{align}
		(\nabla_H V)^{\prime}(0)\add{\phi} =- \add{P^{\perp}} \dtzero\left(\nu_{f+t\phi}\rho_{f+t\phi}\right) &=\add{P^{\perp}}\grad_g \langle \phi, \nu_f\rangle + \add{P^{\perp}}\nu_f \langle H_f\nu_f, \phi\rangle \\
		&= \nu_f \langle H_f\nu_f, \phi\rangle.
	\end{align}
	\add{As this is only of zeroth} order in $\phi\in W^{4,2}(\Sigma;\R^3)^{\perp}$, the claim follows from the Rellich--Kondrachov Theorem, see for instance \cite[Theorem 2.34]{Aubin}.
\end{proof}
\begin{proof}[{Proof of \Cref{prop:Loja Normal}}]
	We verify the assumptions of \cite[Corollary 5.2]{Rupp} for the Hilbert space $W^{4,2}(\Sigma;\R^3)^{\perp}$ which embeds densely into $H=L^2(\Sigma;\R^3)^{\perp}$. The functionals $W$ and $V$ are analytic with analytic $H$-gradients in a neighborhood $\tilde{U}$ of zero by \Cref{lem:analytic}. By \Cref{lem:variations}, the second variation of $W$ at zero is Fredholm of index zero, whereas the second variation of $V$ at zero is compact. Note that $\nabla_H V(0)\neq 0$ since we have
	\begin{align}
		\langle\nabla_H V(0), \nu_f\rangle_H = -\int_{\Sigma} \langle \nu_f, \nu_f\rangle\diff \mu_f= -\CalA(f)<0.
	\end{align}
	Thus, by \cite[Corollary 5.2]{Rupp}, $W$ satisfies a constrained {\L}ojasiewicz--Simon gradient inequality near $\phi=0$, i.e.\ there exist $C, \sigma>0$ and $\theta\in (0,\frac{1}{2}]$ such that for all $\phi\in \tilde{U}$ with $\norm{\phi}{W^{4,2}}\leq \sigma$ and $V(\phi) = V(0)$, we have
	\begin{align}
		\abs{W(\phi)-W(0)}^{1-\theta}\leq C \norm{P_{\phi}\nabla_H W(\phi)}{H},
	\end{align}
	where $P_{\phi}\colon H\to H$ is the $H$-orthogonal projection onto $\{y\in H\mid \langle y, \nabla V(\add{\phi})\rangle_H = 0\}$, cf. \cite[Proposition 3.3]{Rupp}. Thus, for $\lambda(f+\phi)$ as in \eqref{eq:deflambda}, we find
	\begin{align}
		\norm{P_{\phi}\nabla W(\phi)}{H}^2 &= \norm{P_{\phi}(\nabla W(\phi) + \lambda(f+\phi)\nabla V(\phi))}{H}^2  \leq \norm{\nabla W(\phi)+ \lambda(f+\phi)\nabla V(\phi)}{H}^2  \\
		& = \int_{\Sigma}\abs{\nabla\overline{\CalW}(f+\phi)-\lambda(f+\phi)\nu_{f+\phi}}^2 \rho_{f+\phi} \diff\mu_{f+\phi}.\label{eq:Loja0}
	\end{align}
	Now, by the Sobolev embedding theorem $W^{4,2}(\Sigma;\R^3)\hookrightarrow C^1(\Sigma;\R^3)$, we may bound $\norm{\rho_{f+\phi}}{\infty}$ for all $\norm{\phi}{W^{4,2}}\leq \sigma$. Using \eqref{eq:Loja0} and the definition of $W$ and $V$ yields the claim.
\end{proof}
This finally yields the inequality for all directions.
\begin{thm}\label{thm:Loja}
	Let $f\colon\Sigma\to\R^3$ be a constrained Willmore immersion. Then, there exist $C, \sigma>0$ and $\theta\in (0, \frac{1}{2}]$ such that for all ${h}\in W^{4,2}(\Sigma;\R^3)$ with $\norm{h-f}{W^{4,2}}\leq \sigma$ and $\CalV(h)=\CalV(f)$ we have
	\begin{align}
		\abs{\overline{\CalW}(h)-\overline{\CalW}({f})}^{1-\theta}\leq C\norm{\nabla\overline{\CalW}(h)-\lambda(h)\nu_h}{L^2(\diff \mu_{{h}})}.
	\end{align}
\end{thm}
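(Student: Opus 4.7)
The approach will be to reduce the claim to \Cref{prop:Loja Normal} by absorbing tangential perturbations into a reparametrization of $\Sigma$. All quantities appearing in the inequality --- $\overline{\CalW}(h)$, $\CalV(h)$ and $T(h)\defeq \nabla\overline{\CalW}(h)-\lambda(h)\nu_h$ together with its $L^2(\diff\mu_h)$-norm --- are geometric, meaning they are invariant under precomposition with diffeomorphisms of $\Sigma$. Hence it suffices to find, for $h$ sufficiently close to $f$, a diffeomorphism $\Phi_h\colon\Sigma\to\Sigma$ such that $h\circ\Phi_h=f+\phi_h$ with $\phi_h\in W^{4,2}(\Sigma;\R^3)^{\perp}$, and then invoke \Cref{prop:Loja Normal} applied to $\phi_h$.

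To construct $\Phi_h$, I would use the inverse function theorem for the map
\begin{align*}
\Theta\colon W^{4,2}(\Sigma;\R^3)^{\perp}\times W^{4,2}(\Sigma;T\Sigma)&\to W^{4,2}(\Sigma;\R^3),\\
(\phi,X)&\mapsto (f+\phi)\circ\exp^{g_f}_{X},
\end{align*}
where $\exp^{g_f}_{X}(p)\defeq \exp^{g_f}_p(X(p))$ denotes the pointwise $g_f$-exponential. The Sobolev embedding $W^{4,2}(\Sigma)\hookrightarrow C^{2,\alpha}(\Sigma)$ ensures, via standard Nemytskii-type composition results, that $\Theta$ is a well-defined $C^1$-map in a neighborhood of $(0,0)$. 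Its Fr\'echet derivative there is $(\psi,Y)\mapsto \psi+Df\cdot Y$, which is an isomorphism since every $\zeta\in W^{4,2}(\Sigma;\R^3)$ decomposes uniquely as $P^{\perp}\zeta + Df\cdot Y$ for some $Y\in W^{4,2}(\Sigma;T\Sigma)$. Thus for $\norm{h-f}{W^{4,2}}$ small enough there exist unique $\phi_h\in W^{4,2}(\Sigma;\R^3)^{\perp}$ and $X_h\in W^{4,2}(\Sigma;T\Sigma)$ with
$$h=(f+\phi_h)\circ\exp^{g_f}_{X_h},\qquad \norm{\phi_h}{W^{4,2}}+\norm{X_h}{W^{4,2}}\leq C\norm{h-f}{W^{4,2}},$$
and $\Phi_h\defeq\exp^{g_f}_{X_h}$ is a $C^2$-diffeomorphism of $\Sigma$, the latter because $X_h$ is $C^2$-small and the exponential map is a local diffeomorphism.

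By geometric invariance, $\overline{\CalW}(h)=\overline{\CalW}(f+\phi_h)$ and $\CalV(f+\phi_h)=\CalV(h)=\CalV(f)$, and the change of variables formula together with $T(h)\circ\Phi_h = T(f+\phi_h)$ yields $\norm{T(h)}{L^2(\diff\mu_h)}=\norm{T(f+\phi_h)}{L^2(\diff\mu_{f+\phi_h})}$. Shrinking $\sigma>0$ further if necessary so that $\norm{\phi_h}{W^{4,2}}$ is below the threshold in \Cref{prop:Loja Normal}, the desired inequality follows by direct application of the normal version. The main obstacle is verifying the $C^1$-regularity of $\Theta$ into $W^{4,2}$, since the composition $(f+\phi)\circ\exp^{g_f}_X$ is a genuine Nemytskii-type operator at this low regularity level; this is handled by exploiting the $C^{2,\alpha}$-regularity afforded by Sobolev embedding, and could alternatively be circumvented by replacing the intrinsic exponential with an ambient construction of the form $(\phi,X)\mapsto f+\phi+Df\cdot X$ plus a quadratic correction ensuring that the perturbation from $f$ is truly $f$-normal.
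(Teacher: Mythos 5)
Your proposal is correct and follows essentially the same route as the paper: reduce to \Cref{prop:Loja Normal} by writing $h\circ\Phi=f+\phi$ with $\phi$ normal and using diffeomorphism invariance of $\overline{\CalW}$, $\CalV$ and the $L^2$-norm of the constrained gradient. The only difference is that the paper simply cites \cite[p.~357]{CFS09} for the existence of this decomposition, whereas you sketch its proof via the inverse function theorem; your sketch is sound (including your correct identification of the Nemytskii-regularity of the composition map as the delicate point).
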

\begin{proof}
	Let $C, \sigma, \theta$ as in \Cref{prop:Loja Normal}. Like in \cite[p. 357]{CFS09}, there exists $\sigma'>0$ such that every $h\in W^{4,2}(\Sigma;\R^3)$ with $\norm{h-f}{W^{4,2}}\leq \sigma'$ can be written as $h\circ \Phi= f+\phi$ where $\Phi\colon\Sigma\to\Sigma$ is an \add{orientation-preserving} diffeomorphism and $\phi\in W^{4,2}(\Sigma;\R^3)^{\perp}$ with $\norm{\phi}{W^{4,2}}\leq \sigma$. Then, we have $\overline{\CalW}(h)=\overline{\CalW}(f+\phi)$ and $\CalV(h)=\CalV(f+\phi)=V(f)$ by invariance under diffeomorphism, and moreover by the geometric transformation of the $L^2$-norms
	\begin{align}
		\norm{\nabla\overline{\CalW}(h)-\lambda(h)\nu_h}{L^2(\diff\mu_h)} = \norm{\nabla\overline{\CalW}(f+\phi)-\lambda(f+\phi)\nu_{f+\phi}}{L^2(\diff \mu_{f+\phi})}.
	\end{align}
	Renaming $\sigma'$ into $\sigma$, the statement then follows from \Cref{prop:Loja Normal}.
 \end{proof}

\subsection{An asymptotic stability result}
 
The following stability result is an analogue of \cite[Lemma 4.1]{CFS09}.

\begin{lem}\label{lem:LojaAsymStabil}
	Let $f_W\colon\Sigma\to\R^3$ be a constrained Willmore immersion and let $k\in \N$, $k\geq 4$, $\delta>0$. Then there exists $\varepsilon=\varepsilon(f_W)>0$ such that if $f\colon [0,T)\times \Sigma\to\R^3$ is a volume-preserving Willmore flow with \add{$\CalV(f)\equiv \CalV(f_W)$} satisfying
	\begin{enumerate}[(i)]
		\item $\norm{f_0-f_W}{C^{k,\alpha}}<\varepsilon$ for some $\alpha>0$;
		\item $\overline{\CalW}(f(t))\geq \overline{\CalW}(f_W)$ whenever $\norm{f(t)\circ \Phi(t) - f_W}{C^k}\leq \delta$, for some diffeomorphisms $\Phi(t) \colon\Sigma\to\Sigma$;
	\end{enumerate}
	then, the flow exists globally, i.e.\ we may take $T=\infty$. Moreover, it converges, after reparametrization by some diffeomorphisms $\tilde{\Phi}(t)\colon\Sigma\to\Sigma$, smoothly to a constrained Willmore immersion $f_\infty$, satisfying  $\overline{\CalW}(f_W)=\overline{\CalW}(f_\infty)$ and $\norm{f_{\infty}-f_W}{C^k}\leq\delta$.
\end{lem}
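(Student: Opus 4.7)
The strategy is the classical \L{}ojasiewicz--Simon asymptotic stability argument, combined with the constrained inequality from \Cref{thm:Loja}. The first step is to recast the flow as a normal graph over $f_W$. By \Cref{prop:STE} and standard parabolic Schauder estimates for the linearization of \eqref{eq:VpWF} at $f_W$, assumption (i) implies that for $\varepsilon$ small, the flow exists on some uniform time interval and remains close to $f_W$ in $C^{k,\alpha}$. A tangential ODE on $\Sigma$ (whose coefficients depend smoothly on $f$) then produces diffeomorphisms $\Phi(t)\colon\Sigma\to\Sigma$ such that $f(t)\circ \Phi(t) = f_W + \phi(t)$ with $\phi(t) \in W^{4,2}(\Sigma;\R^3)^{\perp}$, as long as $\|\phi(t)\|_{C^k}$ stays small. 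Define the stopping time
\begin{align}
T^{\ast} \defeq \sup\{t \in [0,T) : \|f(s)\circ\Phi(s) - f_W\|_{C^k} \leq \delta \text{ for all } s \leq t\}.
\end{align}

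On $[0, T^{\ast})$, hypothesis (ii) ensures $E(t) \defeq \overline{\CalW}(f(t)) - \overline{\CalW}(f_W) \geq 0$, and by volume preservation \eqref{eq:dtV=0} together with $\varepsilon$ chosen so that $\CalV(f_0) = \CalV(f_W)$ up to rescaling (or after a small normal perturbation of $f_W$ matching the initial volume), the constrained \L{}ojasiewicz--Simon inequality \Cref{thm:Loja} applies to $h(t) = f(t)\circ\Phi(t)$, yielding
\begin{align}
E(t)^{1-\theta} \leq C \norm{\nabla\overline{\CalW}(f(t)) - \lambda(f(t))\nu_{f(t)}}{L^2(\diff \mu_{f(t)})} = C\norm{\partial_t f(t)}{L^2}.
\end{align}
Combined with the dissipation identity \eqref{eq:dtW<=0}, i.e. $E'(t) = -\norm{\partial_t f}{L^2}^2$, this gives the standard differential inequality
\begin{align}
-\frac{\diff}{\diff t}\, E(t)^{\theta} = -\theta E(t)^{\theta - 1} E'(t) \geq \frac{\theta}{C}\norm{\partial_t f(t)}{L^2},
\end{align}
which upon integration yields $\int_0^{T^{\ast}}\norm{\partial_t f(t)}{L^2}\diff t \leq \tfrac{C}{\theta} E(0)^{\theta} \leq C\varepsilon^{\theta}$.

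The next step is to upgrade this $L^1_t L^2_x$-bound on the velocity to $L^1_t C^k_x$-control on the graph function $\phi(t)$. Since the stopping criterion keeps $\|\phi(t)\|_{C^k} \leq \delta$ on $[0,T^\ast)$, curvature is uniformly controlled and non-concentrated, so \Cref{thm:HigherOrderEstimatesTLocalized} together with the $L^{4/3}$-bound on $\lambda$ (which follows from the $C^0$-bound coming from \eqref{eq:Af_j Cinfty bounds}-type estimates) give uniform $C^{m}$-bounds on all derivatives of the flow for $t \geq \varepsilon_0 > 0$. Standard interpolation between $L^1 L^2$ and $L^\infty C^{m}$ then promotes the velocity bound to $\int_0^{T^\ast}\norm{\partial_t\phi(t)}{C^k}\diff t \leq C\varepsilon^{\theta/2}$ (say), so that $\{\phi(t)\}$ is Cauchy in $C^k$ as $t \to T^\ast$. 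Choosing $\varepsilon$ small enough then forces $\|\phi(T^\ast)\|_{C^k} < \delta$ strictly, contradicting the maximality unless $T^\ast = T$; and the uniform higher-order estimates preclude finite-time blow-up, so $T = \infty$. The Cauchy property yields a limit $\phi_\infty \in C^k$ with $\|\phi_\infty\|_{C^k} \leq \delta$, and passing to the limit in $\partial_t f = -(\nabla\overline{\CalW}(f) - \lambda\nu)\nu$ (whose $L^2$-norm is integrable in time, hence has a subsequence tending to zero) shows $f_\infty \defeq f_W + \phi_\infty$ is a constrained Willmore immersion. Smooth convergence follows from standard parabolic smoothing.

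The main obstacle is the interplay between the tangential reparametrization $\Phi(t)$ and the nonlocal constraint. \Cref{thm:Loja} is formulated for competitors $h$ with $\CalV(h) = \CalV(f_W)$, whereas the flow only preserves $\CalV(f_0)$; this forces us to arrange $\CalV(f_0) = \CalV(f_W)$ from the start (possibly by a small normal shift of $f_W$ within the family of constrained Willmore immersions of the prescribed volume, using Fredholmness of the linearization from \Cref{lem:variations}) and to verify that the Lagrange multiplier $\lambda(h(t))$ in the graph representation matches the flow's Lagrange multiplier $\lambda(f(t))$, which holds by geometric invariance. A secondary technical point is that $\Phi(t)$ itself must remain smooth and close to the identity on $[0, T^\ast)$; this is ensured by the uniform bounds on the tangential component of $\partial_t f(t) \circ \Phi(t) - \partial_t \phi(t)$, which are driven by $\|\phi(t)\|_{C^k}$ and hence stay small throughout.
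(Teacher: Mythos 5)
Your overall architecture matches the paper's: write the flow as a normal graph over $f_W$ via a tangential reparametrization, define a stopping time for $\|\phi(t)\|_{C^k}\leq\delta$, apply \Cref{thm:Loja} together with the dissipation identity to get $\int_0^{T^\ast}\|\partial_t f\|_{L^2}\,\diff t\leq \tfrac{C}{\theta}E(0)^\theta$, and close by interpolation. (The volume-matching issue you flag as the ``main obstacle'' is real but is equally present in the paper's proof, which simply invokes \eqref{eq:dtV=0}; your awareness of it is a point in your favor, not a defect.)

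However, there is a genuine gap in your closing step. You claim that ``standard interpolation between $L^1L^2$ and $L^\infty C^m$'' promotes the velocity bound to $\int_0^{T^\ast}\|\partial_t\phi(t)\|_{C^k}\,\diff t\leq C\varepsilon^{\theta/2}$. Interpolation gives pointwise in time $\|\partial_t\phi\|_{C^k}\leq C\|\partial_t\phi\|_{C^M}^{1-\beta}\|\partial_t\phi\|_{L^2}^{\beta}$ with $\beta\in(0,1)$, so you would need to control $\int_0^{T^\ast}\|\partial_t\phi\|_{L^2}^{\beta}\,\diff t$; but $\int_0^T g^{\beta}\,\diff t$ is \emph{not} controlled by $\bigl(\int_0^T g\,\diff t\bigr)^{\beta}$ when $T$ is large or infinite (take $g(t)=(1+t)^{-2}$ with $\beta=\tfrac12$), and $T^\ast$ is precisely what you are trying to show is infinite. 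So the Cauchy-in-$C^k$ conclusion and the strict improvement $\|\phi(T^\ast)\|_{C^k}<\delta$ do not follow as written. The paper avoids this by interpolating the \emph{position} rather than the velocity: integrating the $L^1_tL^2_x$ velocity bound gives $\|\tilde f_t-f_W\|_{L^2}\leq C\varepsilon+C\varepsilon^{\theta}$ uniformly in $t$, while parabolic Schauder estimates give a uniform (order-one, not small) $C^{k,\alpha}$ bound on $\tilde f_t-f_W$; interpolating these two yields $\|\tilde f_t-f_W\|_{C^k}\leq C\varepsilon^{\beta\theta}<\sigma$, which closes the stopping-time argument, and the same device applied to $\tilde f_t-f_\infty$ upgrades the $L^2$ convergence as $t\to\infty$ to $C^k$ (and then smooth) convergence. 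You should also treat separately the case $\overline{\CalW}(f_t)=\overline{\CalW}(f_W)$ at some finite time, where $E^{\theta-1}$ is undefined but the flow is already stationary.
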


\add{The proof of \Cref{lem:LojaAsymStabil} is essentially a nonlocal version of the one of \cite[Lemma 4.1]{CFS09}, with the classical {\L}ojasiewicz--Simon inequality replaced with the constrained one. It is thus moved to \Cref{sec:proof_asympt_stabil}.}
\add{This finally enables us to prove \Cref{thm: no compact blowups2}.}

\begin{proof}[{Proof of \Cref{thm: no compact blowups2}}]
	By \Cref{thm:BlowUpExistence}, there are $t_j \nearrow T, r_j \to r \in [0, \infty]$ and $x_j \in \R^3$ for all $j\in \N$ such that $t_j+\hat{c}r_j^4<T$ and
	\begin{align}\label{eq:no comp blowups 1}
		\hat{f}_j \defeq r_j^{-1}\left( f(t_j+\hat{c}r_j^4, \cdot)-x_j\right)\to \hat{f}
	\end{align}
	smoothly, after reparametrization, on compact subsets of $\R^3$, {where $\hat{f}\colon\hat{\Sigma}\to\R^3$ is a constrained Willmore immersion}. \add{By assumption, $\hat{\Sigma}$ contains a compact component and thus, by \add{the same argument as in \cite[Lemma 4.3]{KSSI}}, we may assume $\hat{\Sigma}=\Sigma$ is compact. Consequently $\hat{f}_j \circ\Phi_j \to\hat{f}$ smoothly on $\Sigma$,
	where $\Phi_j \colon\Sigma\to\Sigma$ are diffeomorphisms.
	Note that now $\hat{f}$ is a constrained Willmore immersion of the compact surface $\hat{\Sigma}=\Sigma$. Thus, there exists $\varepsilon=\varepsilon(\hat{f})$ as in \Cref{lem:LojaAsymStabil}. We would like to apply \Cref{lem:LojaAsymStabil} for the flow with the initial datum $\hat{f}_{j}\circ \Phi_j$, however, this might not have the correct volume. Under the assumptions of the theorem, we can fix that by another rescaling.
	Note that by smooth convergence and since $\Sigma$ is compact, if $\CalV(\hat{f})\neq 0$, then also $\CalV(\hat{f}_j \circ \Phi_j)\neq 0$ and $\Phi_j$ is orientation-preserving for all $j$ sufficiently large. For such $j\in \N$, we define
	\begin{align}\label{eq:def v_j}
		v_j \defeq \left\lbrace\begin{array}{ll}
			\left(\frac{\CalV(\hat{f})}{\CalV(\hat{f}_j \circ \Phi_j)}\right)^{\frac{1}{3}}& \text{if }\CalV(\hat{f})\neq 0, \\
			1 &\text{if }\CalV(\hat{f})=\CalV(f_0)=0,
		\end{array}\right. 
	\end{align}
	By smooth convergence and convergence of the volume, we have $v_j\to 1$ as $j\to\infty$, so we may assume $v_j \in (0,2)$ and
	\begin{align}\label{eq:v_jhat f_j C^4 distance}
		\norm{v_{j_0} \hat{f}_{j_0}\circ \Phi_{j_0} - \hat{f}}{C^{4,\alpha}} \leq \abs{v_{j_0} - 1}\norm{\hat{f}_{j_0}\circ \Phi_{j_0}}{C^{4,\alpha}} + \norm{\hat{f}_{j_0}\circ \Phi_{j_0}-\hat{f}}{C^{4,\alpha}}<\varepsilon,
	\end{align}
	if we choose $j=j_0$ sufficiently large. We define $\bar{r}_{j_0}\defeq v_{j_0}^{-1} r_{j_0}\in (0,\infty)$.
	By \Cref{rem:ParabolicScaling}, the flow
	\begin{align}
		h_{j_0}(t,\cdot)\defeq \bar{r}_{j_0}^{-1}\left(f(t_{j_0}+\bar{r}_{j_0}^4t, \cdot)-x_{j_0}\right)\circ \Phi_{j_0}, \quad t\in [0, \bar{r}_{j_0}^{-4}(T-t_{j_0})),
	\end{align}
	is again a volume-preserving Willmore flow with $h_{j_0}(v_{j_0}^4\hat{c}) =v_{j_0}\hat{f}_{j_0} \circ\Phi_{j_0}$ and volume $\CalV(h_{j_0})\equiv \CalV(v_{j_0} \hat{f}_{j_0} \circ \Phi_{j_0})=\CalV(\hat{f})$ by definition of $v_{j_0}$.}	
	Moreover, for $t\in [0, \bar{r}_{j_0}^{-4}(T-t_{j_0}))$, we have using monotonicity of the energy, the invariances of the Willmore energy and $t_k\nearrow T$
	\begin{align}
		\overline{\CalW}(h_{j_0}(t)) \geq \lim_{s\to \bar{r}_{j_0}^{-4}(T-t_{j_0})}\overline{\CalW}(f(t_{j_0}+\bar{r}_{j_0}^4s)) = \lim_{s\to T}\overline{\CalW}(f(s)) = \lim_{k\to\infty}\overline{\CalW}(\hat{f}_k)=\overline{\CalW}(\hat{f}).
	\end{align}
	The last equality holds since the convergence $\hat{f}_k\circ \Phi_k \to \hat{f}$ is smooth. \add{This together with \eqref{eq:v_jhat f_j C^4 distance} yields that the assumptions of \Cref{lem:LojaAsymStabil} are satisfied, and thus the flow $h_{j_0}$} exists globally with 
	\begin{align}
		h_{j_0} (t)\circ \tilde{\Phi}(t) \to f_{\infty} \quad\text{ smoothly as }t\to\infty,
	\end{align}
	where $\tilde{\Phi}(t)\colon \Sigma\to\Sigma$ are diffeomorphisms and $f_{\infty}$ is a constrained Willmore immersion.  \add{Hence, $f$ also exists globally, so we may take $T=\infty$. Moreover, for all $t\geq t_{j_0}$ we have
	\begin{align}
		&f\left(t, \Phi_{j_0}\circ \tilde{\Phi}(\bar{r}_{j_0}^{-4}(t-t_{j_0}))\right)\nonumber\\
		&= \bar{r}_{j_0} h_{j_0}\left(\bar{r}_{j_0}^{-4}(t-t_{j_0}), \tilde\Phi(\bar{r}_{j_0}^{-4}(t-t_{j_0}))\right)+x_{j_0} \to \bar{r}_{j_0}f_\infty + x_{j_0}\label{eq:f_flow_conv}
	\end{align}
	as $t\to\infty$ smoothly on $\Sigma$. It remains to show that $\hat{f}$ is a limit under translation. Let $r_j\to r\in [0,\infty]$.	Picking $t \defeq t_k+\hat{c}r_k^4$, $k\in\N$, in \eqref{eq:f_flow_conv}, we obtain for the diameters
	\begin{align}
		d_k\defeq \diam{f(t_k+\hat{c}{r}_k^4)(\Sigma)} \to \bar{r}_{j_0} \diam f_{\infty}(\Sigma), \quad \text{ as }k\to\infty,
	\end{align}
	whence $\lim_{k\to\infty}d_k\in (0, \infty)$ since $\Sigma$ is compact. On the other hand, using \eqref{eq:no comp blowups 1} we find
	\begin{align}
		\diam \hat{f}(\hat{\Sigma}) =\lim_{k\to\infty} r_k^{-1}d_k \in (0,\infty),
	\end{align}
	as $\hat{\Sigma}\neq \emptyset$ is compact by assumption. Consequently, $\lim_{k\to\infty}r_k \in (0,\infty)$.
	}
\end{proof}

\section{Convergence to the sphere}\label{sec:conv}
In this section, we will prove our main convergence result. While $\Sigma$ was a general surface before, in this section we will 
	 work exclusively with $\Sigma=\S^2$. The key ingredients {in proving \Cref{thm:conv main}} are the blow-up procedure, the classification of Willmore spheres in $\R^3$ due to Bryant \cite{Bryant1984}, and a removability result for point singularities \cite{KSRemovability}.

\delete{\begin{thm}\label{thm:Willmore sphere classification}
	Let $f\colon\S^2 \to \R^3$ be a Willmore immersion, which is not the round sphere. Then ${\CalW}(f)\geq 16\pi$.
\end{thm}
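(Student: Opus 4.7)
The plan is to invoke Bryant's classification of Willmore spheres in $\R^3$, of which the statement is essentially a direct corollary. The first step is to exploit the conformal invariance of the Willmore energy. Pick a point $x_0 \in f(\S^2)$ (which we may assume is a smooth image point after a small Möbius transformation) and apply an inversion $I_{x_0}$ centered at $x_0$. The composition $\tilde f \defeq I_{x_0} \circ f$ is then defined on $\S^2 \setminus \{p_1, \dots, p_m\}$, where $\{p_1, \dots, p_m\} = f^{-1}(x_0)$, and inherits Willmore-ness with unchanged energy. Bryant's central observation is that such a conformally inverted Willmore sphere is a complete minimal surface in $\R^3$ with finitely many embedded planar ends (precisely at the punctures $p_i$, and possibly additional ones), and conversely every such minimal surface with planar ends yields, after compactification, a branched Willmore sphere in $\R^3$ via inversion.

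The second step is the energy identity. For a complete immersed minimal surface $\tilde f\colon \S^2\setminus\{q_1,\dots,q_n\}\to\R^3$ with embedded planar ends, a residue/Gauss--Bonnet calculation for the secondary Gauss map shows that the total curvature equals $-4\pi n$, and since minimality gives $\abs{A}^2 = -2K$ (so that $4\overline\CalW(\tilde f) = -\int 2K\diff\mu$ plus boundary contributions at the ends), one obtains $\CalW(f) = \CalW(\tilde f) = 4\pi n$. The case $n=1$ corresponds to the plane, whose inversion is precisely the round sphere, which is excluded by hypothesis.

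The final step is Bryant's quantization result: he explicitly rules out the values $n = 2, 3, 5, 7$ by constructing the holomorphic quartic differential associated to a Willmore sphere and analyzing its zeros together with the algebraic constraints imposed by the planar-end condition. Hence, if $f$ is not the round sphere, one must have $n \geq 4$, giving $\CalW(f) \geq 16\pi$. The entire argument rests on Bryant's classification \cite{Bryant1984}, which is the nontrivial input; in the paper under consideration the result is invoked as a black box, so no independent proof is necessary here, and the main obstacle (reconstructing Bryant's classification from scratch) is deliberately avoided.
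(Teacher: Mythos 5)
Your proposal is correct and takes essentially the same route as the paper: both reduce the statement to Bryant's classification of Willmore spheres, invoked as a black box, which quantizes the critical values and excludes the levels $8\pi$ and $12\pi$ (the paper phrases this as the critical values of $W=\CalW-4\pi$ being $4\pi d$ with $d\in\N_0\setminus\{1,2\}$). One cosmetic slip in your sketch of Bryant's duality: for a genus-zero minimal surface with $n$ embedded planar ends the total curvature is $-4\pi(n-1)$, not $-4\pi n$, and $\CalW(\tilde f)=0$ since $\tilde f$ is minimal; the correct bookkeeping still yields $\CalW(f)=4\pi n$, so the conclusion is unaffected.
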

\begin{proof}
	As discussed in \cite[Section 5]{Bryant1984}, the possible critical values of the functional $W(f) \defeq \int (\frac{1}{4}H^2-K)\diff \mu$ for immersions $f\colon\S^2\to\R^3$ are $4\pi d$ with $d\in \N_0\setminus\{1,2\}$. Note that $W(f) = \CalW(f)-4\pi$ by Gau\ss--Bonnet. If $f$ is not a round sphere, we have $W(f)>0$ and hence $W(f)\geq 12\pi$, so ${\CalW}(f)\geq 16\pi$.
\end{proof}
We are finally able to prove our main result.
}
\begin{proof}[{Proof of \Cref{thm:conv main}}]
	Let $f\colon[0,T)\times \S^2\to\R^3$ be a volume-preserving Willmore flow with initial datum $f_0$ with $T$ maximal and ${\CalW}(f_0)\leq 8\pi$. If $f_0$ is a constrained Willmore immersion, then it is a Willmore immersion by \Cref{lem:constr Willmore}, since $\CalV(f_0)\neq 0$. Hence it has to be a round sphere \delete{by \Cref{thm:Willmore sphere classification}}\add{since by \cite[Section 5]{Bryant1984}, the critical values of Willmore immersions of spherical type are $4\pi d$ with $d\in \N\setminus \{2,3\}$ and the global minimizers are the round spheres \cite{Willmore65}}. In this case the result follows. If $f_0$ is not a constrained Willmore immersion, then the energy instantaneously drops below $8\pi$ by \Cref{rem:W strict Lyapunov}, so we can assume $\CalW(f_0)<8\pi$.
	
	By \Cref{thm:BlowUpExistence}, \Cref{rem:lambda^2A 8pi} and \Cref{prop:nontrivial blow up}, there exist $t_j \nearrow T, (r_j)_{j\in \N}\subset (0,\infty)$ and $(x_j)_{j\in \N}\subset\R^3$ such that the corresponding concentration limit $\hat{f}\colon\hat{\Sigma}\to\R^3$ is a unconstrained Willmore immersion satisfying
	\begin{align}\label{eq:convergence 1}
	\int_{\hat{\Sigma}}\abs{A_{\hat{f}}}^2\diff \mu_{\hat{f}}>0.
	\end{align}
	\add{Moreover, by \Cref{thm:BlowUpExistence} we have $\CalW(\hat{f})<8\pi$.} 
	\add{Suppose $\hat{\Sigma}$ is not compact. There is $x_0\not\in \hat{f}(\hat{\Sigma})$ and with the inversion $I(x)\defeq \abs{x-x_0}^{-2}(x-x_0)$, we set $\bar{\Sigma}\defeq I(\hat{f}(\hat{\Sigma}))\cup \{0\}$.}  By the removability result \cite[Lemma 5.1]{KSRemovability}, $\bar{\Sigma}$ is a smooth Willmore surface. Moreover, since $\hat{\Sigma}$ is complete \add{by \Cref{cor:langer_complete},} so is $\hat{f}(\hat{\Sigma})$. Hence, $\operatorname{dist}(x_0, \hat{f}(\hat{\Sigma}))>0$ and consequently $\bar{\Sigma}$ is bounded. Using the definition of $\bar{\Sigma}$ and the completeness of $\hat{f}(\hat{\Sigma})$ again, it is not difficult to show that $\bar{\Sigma}$ is closed in $\R^3$ and thus compact.
	Furthermore, by \cite[Lemma 5.1]{KSRemovability}, we have $\CalW(\bar{\Sigma})<8\pi$ and $g(\bar{\Sigma})=0$ and hence  $\bar{\Sigma}$ is a Willmore sphere. \add{Using \cite{Bryant1984,Willmore65} as above, we conclude that} $\bar{\Sigma}$ has to be a round sphere. Since $\hat{f}(\hat{\Sigma})$ is not compact by assumption, this yields that $\hat{f}(\hat{\Sigma}) = I^{-1}(\bar{\Sigma})$ is a plane, contradicting \eqref{eq:convergence 1}.
	
	Thus, $\hat{\Sigma}$ is compact, hence \add{by arguing as in \cite[Lemma 4.3]{KSSI}}, we can assume $\hat{\Sigma}=\Sigma=\S^2$. \add{By \cite{Bryant1984,Willmore65} and the Li--Yau inequality \cite{LiYau}, we then have that $\hat{f}$ parametrizes an embedded round sphere, in particular $\CalV(\hat{f})\neq 0$.}
	\add{Hence, \Cref{thm: no compact blowups2} yields global existence and convergence to a constrained Willmore immersion $f_\infty$ with $\CalW(f_\infty)=\CalW(\hat{f})$. By \cite{Willmore65}, $f_\infty$ parametrizes a round sphere.}
	Since the volume is preserved by \eqref{eq:dtV=0}, we conclude that $\CalV(\bar{r}_{j_0}f_{\infty}+x_{j_0}) = \CalV(f_0)$ and consequently the radius is $R\defeq (\frac{3\abs{\CalV(f_0)}}{4\pi})^{\frac{1}{3}}>0$.
\end{proof}

\begin{appendices}
	\crefalias{section}{appendix}
	
		\delete{\section{On the notion of volume}\label{sec:volume}
	{
	The signed volume defined in \eqref{eq:defVol} is motivated by the situation when $f\colon\Sigma\to\R^3$ is  an embedding. In this case, by the Jordan--Brouwer Separation Theorem, see for instance \cite[Chapter 2, § 5]{GuiPoll}, the complement $\R^3\setminus f(\Sigma)$ has two connected components $\Omega$ and $\Omega^o$, where $\Omega$, the ``inside'', is bounded and $\Omega^o$ is unbounded. \\
	Hence, there exists a unique orientation on $\Sigma$ such that $\nu$ defined as in \eqref{eq:def normal} is the interior unit normal, pointing towards the interior $\Omega$. It is then an application of the divergence theorem that the volume enclosed by $f(\Sigma)$, i.e.\ the $3$-dimensional Lebesgue measure of $\Omega$, can be computed by
	\begin{align}
		\abs{\Omega} =  -\frac{1}{3} \int_{\Sigma}\langle f, \nu\rangle\diff\mu = \CalV(f).
	\end{align}
	While this calculation justifies the definition of the volume in \eqref{eq:defVol}, we note that $\CalV(f)$ can also be negative in the case of an embedding, namely if the orientation on $\Sigma$ makes $\nu$ defined by \eqref{eq:def normal} the exterior unit normal.
		Nevertheless, even in this case the enclosed volume is still preserved along solutions of \eqref{eq:VpWF} with $\lambda$ as in \eqref{eq:deflambda}, as $\abs{\Omega} = \frac{1}{3}\int_{\Sigma}\langle f, \nu\rangle\diff \mu = -\CalV(f_0)$ if $\nu$ is the exterior unit normal. \\
		We remark that in the literature, different definitions of the volume of an immersion appear. Using the language of differential forms, in \cite[(1.1)]{BuergerKuwert} the volume of $Z\colon \S^2\to\R^3$ is given by 
		\begin{align}\label{eq:vol Kuwert}
			V(Z) = \int_{\S^2} Z^{\ast}\omega, \quad\text{where }\omega(x) = \frac{1}{3} x~\llcorner (\diff x^1\wedge\diff x^2\wedge\diff x^3), 
		\end{align}
		where $\llcorner$ denotes the interior product of differential forms and $\diff x^1\wedge\diff x^2\wedge\diff x^3$ is the standard volume form on $\R^3$. Another definition is given in \cite[p.2]{BlattHelfrich}, where 
		\begin{align}\label{eq:vol Blatt}
			\operatorname{vol}(f) = \int_{[0,1]\times \Sigma} \phi_f^{\ast}(\diff x^1\wedge\diff x^2\wedge\diff x^3),
		\end{align}
		with $\phi_f(t,p) = tf(x)$. We note that both \eqref{eq:vol Kuwert} and \eqref{eq:vol Blatt} depend on the orientation on $\Sigma$. A direct computation shows that for the unit normal induced by the orientation, cf. \eqref{eq:def normal}, both \eqref{eq:vol Kuwert} and \eqref{eq:vol Blatt} coincide with our definition of the volume in \eqref{eq:defVol}.} 
	}
	\delete{
	\section{Simon's monotonicity formula}
	The following monotonicity formula has become one of the fundamental tools in understanding the Willmore energy. It was originally proven for embedded compact surfaces in \cite{SimonWillmore} and then extended to the varifold setting in \cite[Appendix A]{KSRemovability}. The version we state here is as in \cite{KSLectureNotes}.
	\begin{lem}[{\cite[Section 2]{KSLectureNotes}}]
		Let $f\colon\Sigma\to\R^3$ be a proper immersion of a surface $\Sigma$ without boundary. Then, there exists an absolute constant $0<C<\infty$ such that for all $x\in \R^3$ and $0<\sigma \leq \rho<\infty$ we have
		\begin{align}\label{eq:SimonMono 1}
			\sigma^{-2}\mu(B_{\sigma}(x))\leq C \left(\rho^{-2}\mu(B_{\rho}(x))+ \int_{B_{\rho}(x)}\abs{H}^2\diff \mu\right).
		\end{align}
		In particular, for $x\in f(\Sigma)$ and for all $0<\rho<\infty$ we find
		\begin{align}\label{eq:SimonMono 2}
			\pi &\leq C \left(\rho^{-2}\mu(B_{\rho}(x))+ \int_{B_{\rho}(x)}\abs{H}^2\diff \mu\right),
		\end{align}
		whereas if $f(\Sigma)$ is compact, and $0<\sigma<\infty$ we get
		\begin{align}\label{eq:SimonMono 3}
			\sigma^{-2}\mu(B_{\sigma}(x)) \leq C \CalW(f).
		\end{align}
	\end{lem}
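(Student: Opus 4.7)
My approach is the classical Simon-type monotonicity argument. The plan is to derive \eqref{eq:SimonMono 1} from the first variation formula on $\Sigma$, and then obtain \eqref{eq:SimonMono 2} and \eqref{eq:SimonMono 3} as limiting cases.

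For the main monotonicity, I would test the first variation formula $\int_\Sigma \divergence_\Sigma X \, \diff\mu = -\int_\Sigma\langle X,\vec H\rangle\,\diff\mu$ (valid since $\partial\Sigma = \emptyset$) with the radial vector field $X = \varphi(|f-x|/\rho)(f-x)$, where $\varphi\colon[0,\infty)\to[0,1]$ is a Lipschitz cutoff supported in $[0,1]$. The key identities are $\divergence_\Sigma(f-x) = 2$ (since $\Sigma$ is $2$-dimensional and $\nabla^{\R^3}(f-x) = \Id$) and $\nabla_\Sigma |f-x| = (f-x)^\top / |f-x|$, from which the chain rule yields an integral identity relating $\mu(B_\rho(x))$ to $\int_{B_\rho(x)}\langle f-x,\vec H\rangle\,\diff\mu$. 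Specializing $\varphi$ to the explicit Lipschitz cutoff $\varphi_{\sigma,\rho}(s) = \min\{1,(\rho-s)_+/(\rho-\sigma)\}$ and passing to $\sigma\nearrow\rho$ produces, after discarding a nonnegative boundary contribution proportional to $|(f-x)^\perp|^2/|f-x|^3$, the differential inequality
\begin{align*}
\frac{\diff}{\diff\rho}\big[\rho^{-2}\mu(B_\rho(x))\big] \geq -\frac{1}{\rho^3}\int_{B_\rho(x)}\langle f-x,\vec H\rangle\,\diff\mu.
\end{align*}
Using $|f-x|\leq\rho$ on $B_\rho(x)$ and Young's inequality absorbs the right-hand side, showing that
\begin{align*}
\rho \mapsto \rho^{-2}\mu(B_\rho(x)) + c\int_{B_\rho(x)}|\vec H|^2\,\diff\mu
\end{align*}
is nondecreasing. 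Integrating from $\sigma$ to $\rho$ then yields \eqref{eq:SimonMono 1}.

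The specialized inequalities follow by taking limits in \eqref{eq:SimonMono 1}. For \eqref{eq:SimonMono 2}, at any $x \in f(\Sigma)$, smoothness of the immersion implies $\Sigma$ is locally asymptotic to its tangent plane at $f^{-1}(x)$, so $\liminf_{\sigma\searrow 0}\sigma^{-2}\mu(B_\sigma(x)) \geq \pi$; sending $\sigma \to 0$ in \eqref{eq:SimonMono 1} then gives the claim. For \eqref{eq:SimonMono 3}, if $f(\Sigma)$ is compact then $\mu(\Sigma) < \infty$, so $\rho^{-2}\mu(B_\rho(x)) \to 0$ as $\rho \to \infty$; sending $\rho \to \infty$ in \eqref{eq:SimonMono 1} and using $\int_\Sigma H^2\,\diff\mu = 4\CalW(f)$ concludes.

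The main obstacle I expect is the technical justification of the differentiation-in-$\rho$ step — in particular verifying that the $\varphi'$-term in the chain-rule computation converges, in the limit $\sigma\nearrow\rho$, to the correct boundary contribution $\tfrac{d}{d\rho}\mu(B_\rho(x))$. The explicit Lipschitz cutoff $\varphi_{\sigma,\rho}$ above bypasses the need for smooth regularization by producing exact identities on the annulus $B_\rho(x)\setminus B_\sigma(x)$, which can then be compared directly to the desired monotone quantity.
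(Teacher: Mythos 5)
The paper offers no proof of this lemma at all: it is quoted from Simon's original argument and its exposition in the Kuwert--Sch\"atzle lecture notes, so your attempt has to stand on its own. Your overall strategy --- the first variation identity tested against $\varphi(\abs{f-x}/\rho)(f-x)$, a differential inequality in $\rho$, and the two limiting cases --- is the standard route, and your deductions of \eqref{eq:SimonMono 2} and \eqref{eq:SimonMono 3} from \eqref{eq:SimonMono 1} are correct (properness guarantees $\mu(B_\rho(x))<\infty$ and, in the compact case, $\mu(\Sigma)<\infty$).

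The gap is in the central monotonicity step, and you have misplaced where the difficulty lies. The quantity $\rho^{-2}\mu(B_\rho(x))+c\int_{B_\rho(x)}\abs{\vec H}^2\diff\mu$ is \emph{not} nondecreasing for any $c>0$: for a round sphere and $\rho$ beyond its diameter both integrals are constant while $\rho^{-2}\mu(B_\rho(x))$ strictly decreases. The reason your absorption fails is structural. Cauchy--Schwarz and $\abs{f-x}\le\rho$ give
\begin{align}
\rho^{-3}\Abs{\int_{B_\rho(x)}\langle f-x,\vec H\rangle\diff\mu}\;\le\;\frac{1}{2\rho}\left(\rho^{-2}\mu(B_\rho(x))+\int_{B_\rho(x)}\abs{\vec H}^2\diff\mu\right),
\end{align}
i.e.\ the error is controlled by the \emph{values} of the two terms, not by their $\rho$-derivatives, so Young's inequality cannot convert the right-hand side of your differential inequality into a total derivative; at best you obtain a Gronwall-type bound whose constant degenerates as $\rho/\sigma\to\infty$, which is not the absolute constant claimed. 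The missing idea is Simon's cross-term device: the monotone quantity must itself contain the cross term, schematically
\begin{align}
\gamma(\rho)\defeq \rho^{-2}\mu(B_\rho(x))+\tfrac{1}{16}\int_{B_\rho(x)}\abs{\vec H}^2\diff\mu+\tfrac{1}{2}\,\rho^{-2}\int_{B_\rho(x)}\langle f-x,\vec H\rangle\diff\mu,
\end{align}
and one proves $\gamma(\sigma)\le\gamma(\rho)$ by rewriting $\rho^{-3}\int\varphi_\rho\langle f-x,\vec H\rangle\diff\mu$ as $-\tfrac12\tfrac{\diff}{\diff\rho}\bigl[\rho^{-2}\int\varphi_\rho\langle f-x,\vec H\rangle\diff\mu\bigr]$ plus a remainder supported on the annulus where $\varphi_\rho'\neq 0$, which \emph{is} absorbable by Young into the $\rho$-derivatives of the area and curvature terms. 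Only after this does one apply Cauchy--Schwarz --- at the two endpoints $\sigma$ and $\rho$ only --- to trade the cross term for the other two; that endpoint estimate is precisely where the constant $C>1$ in \eqref{eq:SimonMono 1} originates. By contrast, the obstacle you flagged (justifying the differentiation in $\rho$ via the explicit Lipschitz cutoff) is routine.
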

	}
	
	\section{Smooth convergence on compact sets}
	The essential tool in the construction of the blow-up in \Cref{thm:BlowUpExistence} was the following local version of Langer's compactness theorem \cite{LangerCompactness} \add{by Kuwert and Sch\"atzle \cite{KSSI}}, see also \cite{Breuning} and \cite[Appendix B]{DMSS20} for some consequences of this notion of convergence. 
	\begin{thm}[\add{\cite[Theorem 4.2]{KSSI}}]\label{thm:LocLangerCompactness}
		Let $f_j \colon\Sigma_j\to \R^3$ be a sequence of proper immersions, where $\Sigma_j$ is a $2$-manifold without boundary.
		 Let $\Sigma_j(R)\defeq \{p\in \Sigma_j\mid \abs{f_j(p)}<R\}$ and assume the bounds
		\begin{align}
			\mu_j(\Sigma_j(R))&\leq C(R) \text{ for any }R>0,\\
			\norm{\nabla^m A_j}{L^{\infty}(\Sigma_j)}&\leq C(m) \text{ for all  }m\in \N_0.
		\end{align}
		Then, there exist a proper immersion $\hat{f}\colon \hat{\Sigma}\to\R^3$, where $\hat{\Sigma}$ is a $2$-manifold without boundary, such that after passing to a subsequence we have a representation
		\begin{align}\label{eq:graphRep}
			f_j \circ \phi_j = \hat{f} + u_j \text{ on }\hat \Sigma(j) = \{p\in \hat\Sigma\mid \abs{\hat{f}(p)}<j\}
		\end{align}
		with the following properties:
		\begin{align}
			&\phi_j \colon\hat{\Sigma}(j)\to U_j \subset \Sigma_j \text{ is a diffeomorphism,} \\
			&\Sigma_j(R)\subset U_j \text{ if }j\geq j(R),\\
			&u_j \in C^{\infty}(\hat{\Sigma}(j);\R^3)\text{ is normal along }\hat{f},\\
			&\norm{\hat{\nabla}^{m}u_j}{L^{\infty}(\hat{\Sigma}(j))} \to 0 \text{ as } j\to\infty, \text{ for any }m\in \N_0.
			\end{align}
	\end{thm}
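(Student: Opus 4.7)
The strategy is classical \cite{LangerCompactness,Breuning}: construct a uniform local graph representation from the curvature bounds, extract local limits by Arzel\`a--Ascoli on an exhaustion of $\R^3$, and glue them into a global proper immersion by a diagonal argument.

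The first step is a uniform local graph representation. Using only the bound $\norm{A_j}{L^\infty(\Sigma_j)} \leq C(0)$, one shows that there exists $r_0 = r_0(C(0)) > 0$ such that for every $j$ and every $p \in \Sigma_j$, there is a connected open neighborhood $V_{j,p} \subset \Sigma_j$ of $p$ and a smooth function $v_{j,p} \colon D_{r_0} \subset T_p^{f_j}\R^3 \to (T_p^{f_j}\R^3)^{\perp}$ (the disk of radius $r_0$ in the tangent plane) such that, after identifying $f_j(p)$ with the origin,
\begin{align}
f_j(V_{j,p}) = \graph v_{j,p}, \qquad \abs{D^2 v_{j,p}} \leq C.
\end{align}
The bounds on $\nabla^m A_j$ then upgrade this to uniform $C^{m+2}$-bounds on $v_{j,p}$ for every $m \in \N_0$.

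The second step is a local compactness argument. Fix $R > 0$. Cover $\overline{B_R(0)} \cap f_j(\Sigma_j)$ by balls $B_{r_0/4}(x_{j,k})$, $k=1,\dots,N_j$, with $x_{j,k} \in f_j(\Sigma_j)$. The area bound $\mu_j(\Sigma_j(R+r_0)) \leq C(R+r_0)$ combined with Simon's monotonicity formula \eqref{eq:SimonMono 2} provides a uniform positive lower bound for $\mu_j(B_{r_0/4}(x_{j,k}))$, yielding $N_j \leq N(R)$ independently of $j$, and moreover bounds the number of sheets of $f_j$ over each $B_{r_0/2}(x_{j,k})$ by a universal constant $M(R)$. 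Passing to a subsequence, one may assume $N_j \equiv N$, that $x_{j,k} \to \hat{x}_k$ as $j\to\infty$, that the associated tangent planes converge, and that, by Arzel\`a--Ascoli applied to each of the finitely many graph functions $v_{j,p}$, the $v_{j,p}$ converge smoothly to limit graphs over the limiting tangent planes. Uniqueness of local graph representations lets us glue these limits across the overlaps into a smooth proper immersion $\hat{f}^R \colon \hat{\Sigma}^R \to \R^3$ of a $2$-manifold without boundary, together with diffeomorphisms $\phi_j^R$ onto open subsets $U_j^R \subset \Sigma_j$ containing $\Sigma_j(R)$ for $j$ large, such that
\begin{align}
f_j \circ \phi_j^R = \hat{f}^R + u_j^R \qquad \text{with } \norm{\hat{\nabla}^m u_j^R}{L^\infty} \to 0.
\end{align}

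Finally, a diagonal argument along an exhaustion $R_\ell \nearrow \infty$ produces compatible subsequences whose limits $\hat{f}^{R_\ell}$ agree on the overlaps $\hat{\Sigma}^{R_\ell} \cap \hat{\Sigma}^{R_{\ell+1}}$, yielding a global proper immersion $\hat{f} \colon \hat{\Sigma} \to \R^3$ of a manifold without boundary. Completeness of $\hat{\Sigma}$ follows because $\hat{f}$ is proper and each $\hat{\Sigma}(j)$ is precompact in $\hat{\Sigma}$ with $\bigcup_j \hat{\Sigma}(j) = \hat{\Sigma}$. Relabeling $\hat{f}^R$, $\phi_j^R$, $u_j^R$ gives the desired representation \eqref{eq:graphRep}.

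The main obstacle is the \emph{gluing step}: one must consistently identify which sheets over neighboring balls belong to the same component of $\hat{\Sigma}$ and which must remain distinct, and simultaneously ensure that the reparametrizing diffeomorphisms $\phi_j$ on overlapping regions can be chosen to be compatible in the limit. This is handled via the $C^1$-convergence of the tangent planes together with the uniqueness of the local graph description, but requires careful bookkeeping --- essentially reproducing the analysis of \cite{Breuning} in this setting.
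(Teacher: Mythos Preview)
Your sketch is correct and is precisely the classical Langer/Breuning argument that the paper invokes by citing \cite[Theorem 4.2]{KSSI}; the paper supplies only the completeness of $\hat{\Sigma}$, arguing that $\hat{f}$ is $1$-Lipschitz for the pullback metric, so a Cauchy sequence in $\hat{\Sigma}$ has bounded image in $\R^3$ and hence lies in the compact set $\hat{f}^{-1}(\overline{B_R(0)})$ by properness. Your completeness line is slightly elliptic --- a precompact exhaustion by itself does not imply completeness (think of $\R^2\setminus\{0\}$) --- but the missing Lipschitz observation is immediate from the definition of the induced metric.
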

	\add{
	\begin{cor}\label{cor:langer_complete}
		In \Cref{thm:LocLangerCompactness}, the manifold $(\hat{\Sigma}, g_{\hat{f}})$ is complete.
	\end{cor}
\begin{proof}
	Suppose $(p_n)_{n\in \N}\subset \hat{\Sigma}$ is a Cauchy-Sequence with respect to the Riemannian distance $\hat{d}$ on $\hat{\Sigma}$. Recall that the metric $g_{\hat{f}}=\hat{f}^*\langle \cdot, \cdot\rangle$ on $\hat{\Sigma}$ induced by the immersion $\hat{f}$ makes $\hat{f}$ an isometry. Now, for any  curve $\gamma\colon[0,1]\to\hat{\Sigma}$ such that $\eta(0)=p_n, \gamma(1)=p_m$ we have
	\begin{align}
		\abs{\hat{f}(p_n)- \hat{f}(p_m)} \leq  \mathcal{L}(\hat{f}\circ\gamma) = \CalL(\gamma),
	\end{align}
and hence we find $\abs{\hat{f}(p_n)- \hat{f}(p_m)} \leq \hat{d}(p_n, p_m)$ for all $n,m\in\N$. In particular there exists $R>0$ such that $(\hat{f}(p_n))_{n\in\N}\subset \overline{B_R(0)}$. As $\hat{f}$ is proper we find $p_n \in \hat{f}^{-1}(\overline{B_R(0)})$ which is compact. Since $(p_n)_{n\in\N}$ is Cauchy, $\lim_{n\to\infty}p_n\in\hat{\Sigma}$ exists.
\end{proof}
}
	\delete{
	\begin{remark}\label{rem:langer loc empty}
		Note that the limit $\hat{\Sigma}$ in \Cref{thm:LocLangerCompactness} can be empty, in which case $\hat{f}$ is the empty map. This can in fact happen, if we take $f_j$ to parameterize a sequence of round spheres $B_1(x_j)$ with $x_j \to \infty$, for instance.
	\end{remark}
	}
	\delete{
	\begin{remark}\label{rem:volume convergence}
		In the case that $\Sigma_j =\hat{\Sigma}=\Sigma$ is compact in \Cref{thm:LocLangerCompactness}, after taking $j$ sufficiently large we may assume that $\phi_j \colon\Sigma\to\Sigma$ are diffeomorphisms on $\Sigma$ and we have $f_j\circ \phi_j = \hat{f}+u_j$ on $\Sigma$ with $\norm{u_j}{C^m(\Sigma, g_{\hat{f}})} \to 0$ for all $m\in \N_0$, see \cite[Proposition C.9]{DMSS20}. This yields $\norm{f_j \circ \phi_j - \hat{f}}{C^m(\Sigma, g_{\hat{f}})} \to 0$ as $j\to\infty$ for all $m\in \N_0$. By \Cref{lem:unit normal props} (ii), we then find $\nu_{f_j\circ \phi_j} \to \nu_{\hat{f}}$ in $C^0(\Sigma, g_{\hat{f}})$, so that using \Cref{lem:analytic} (i) and \eqref{eq:defVol}, we deduce $\CalV(f_j\circ \phi_j)\to \hat{f}$ as $j\to\infty$.
	\end{remark}
	}

	\delete{
	\section{Interpolation inequalities}
	
	The interpolation inequalities from \cite{KSGF} and \cite{KSSI} are the crucial tools in proving the integral estimates in \Cref{sec:loc int est}. We restate some of them here in the case of codimension one for the convenience of the reader.
}
	\delete{
	\begin{lem}\label{lem:A^6Estimate}
		There exist absolute constants $\varepsilon_0, C\in (0,\infty)$ such that if $\int_{[\gamma>0]}\abs{A}^2\diff\mu<\varepsilon_0$ for $\gamma\in C^\infty_c(\Sigma)$ with $\norm{\nabla\gamma}{\infty}\leq \Lambda$ we have
		\begin{align}
			\int \abs{A}^6\gamma^4\diff \mu \leq C \int_{[\gamma>0]} \abs{A}^2\diff \mu \int\abs{\nabla^2 A}^2\gamma^4 \diff \mu + C\Lambda^4 \left( \int_{[\gamma>0]}\abs{A}^2\diff \mu\right)^2.
		\end{align}
	\end{lem}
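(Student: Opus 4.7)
The strategy combines the Michael--Simon--Sobolev inequality on the immersed surface with an integration-by-parts identity based on Simons' formula; the smallness hypothesis $\int_{[\gamma>0]}|A|^2<\varepsilon_0$ is used to absorb critical higher-power terms.

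First, apply the Michael--Simon--Sobolev inequality to (a smooth approximation of) $u=|A|^3\gamma^2$ on the $2$-dimensional immersed surface in $\R^3$. Since we are in codimension one, the bound $|H|\leq\sqrt{2}|A|$ from \eqref{eq:AA0H} gives
\[
\Big(\int|A|^6\gamma^4\diff\mu\Big)^{1/2} \leq C\int\big(|A|^2|\nabla A|\gamma^2+\Lambda|A|^3\gamma+|A|^4\gamma^2\big)\diff\mu.
\]
Cauchy--Schwarz on each of the three terms extracts a factor $\big(\int_{[\gamma>0]}|A|^2\big)^{1/2}$; for instance,
\[
\int|A|^2|\nabla A|\gamma^2\diff\mu \leq \Big(\int_{[\gamma>0]}|A|^2\diff\mu\Big)^{1/2}\Big(\int|A|^2|\nabla A|^2\gamma^4\diff\mu\Big)^{1/2}.
\]
Squaring, absorbing the $\int|A|^6\gamma^4$ contribution on the left (possible for $\varepsilon_0$ small), and applying Young's inequality on the mixed terms yields
\[
\int|A|^6\gamma^4\diff\mu \leq C\Big(\int_{[\gamma>0]}|A|^2\diff\mu\Big)\int|A|^2|\nabla A|^2\gamma^4\diff\mu + C\Lambda^4\Big(\int_{[\gamma>0]}|A|^2\diff\mu\Big)^2.
\]

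The remaining task is to replace $|A|^2|\nabla A|^2$ by $|\nabla^2 A|^2$ on the right-hand side. Integration by parts with $\phi=|A|^2\gamma^4$ gives
\[
\int|\nabla A|^2\phi\diff\mu = -\int\langle A,\Delta A\rangle\phi\diff\mu - \int\langle A,\nabla A\rangle\cdot\nabla\phi\diff\mu,
\]
and Simons' identity \eqref{eq:Simons'Identity} together with Codazzi \eqref{eq:nabla H A A^0} implies $|\Delta A|\leq C(|\nabla^2 A|+|A|^3)$. A careful Young's inequality splitting of all resulting terms---in particular, a weighted splitting with a small parameter $\epsilon>0$ on the boundary-type term $\Lambda\int|A|^3|\nabla A|\gamma^3$ arising from $\nabla\phi$---produces after absorption
\[
\int|A|^2|\nabla A|^2\gamma^4\diff\mu \leq C\int|\nabla^2 A|^2\gamma^4\diff\mu + C\int|A|^6\gamma^4\diff\mu + C\Lambda^2\int|A|^4\gamma^2\diff\mu.
\]

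Substituting this back into the previous bound, the prefactor $\int_{[\gamma>0]}|A|^2<\varepsilon_0$ allows absorption of the $\int|A|^6\gamma^4$ term on the left, while $\int|A|^4\gamma^2$ is bounded via Cauchy--Schwarz by $\big(\int_{[\gamma>0]}|A|^2\big)^{1/2}\big(\int|A|^6\gamma^4\big)^{1/2}$ and handled by another application of Young's inequality. The cubic powers of $\int_{[\gamma>0]}|A|^2$ that appear are dominated by $\varepsilon_0\cdot\big(\int_{[\gamma>0]}|A|^2\big)^2$, completing the proof. The main technical obstacle lies in the integration-by-parts step: without the carefully weighted Young splitting, the $\Lambda$-term generated by $\nabla\phi$ would contribute an unabsorbable multiple of $\int|A|^2|\nabla A|^2\gamma^4$ on the right-hand side.
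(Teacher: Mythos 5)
Your argument is correct in substance, but it takes a different (much more self-contained) route than the paper: the paper disposes of this lemma in two lines by citing \cite[Lemma 4.2]{KSGF} --- which is precisely the estimate $\int|A|^6\gamma^4\leq C\int_{[\gamma>0]}|A|^2\int(|\nabla^2A|^2+|A|^6)\gamma^4+C\Lambda^4(\int_{[\gamma>0]}|A|^2)^2$ after rescaling --- and then absorbs the $|A|^6$-term on the right using $\int_{[\gamma>0]}|A|^2<\varepsilon_0$. You instead reprove the cited interpolation inequality from scratch via Michael--Simon--Sobolev applied to $|A|^3\gamma^2$, Cauchy--Schwarz, and an integration by parts combined with Simons' identity; this is essentially the strategy behind the Kuwert--Schätzle interpolation machinery, so the two approaches are morally the same, but yours makes the paper independent of the external reference at the cost of a page of computation. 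One point in your integration-by-parts step deserves to be made explicit: with $\phi=|A|^2\gamma^4$ the term $-\int\langle A,\nabla_iA\rangle\nabla_i\phi\,\diff\mu$ contains the contribution $-2\int\langle A,\nabla A\rangle^2\gamma^4\diff\mu$ coming from $\nabla_i(|A|^2)\gamma^4$. If you estimate this with absolute values you get $+2\int|A|^2|\nabla A|^2\gamma^4\diff\mu$, which is a non-absorbable multiple of the left-hand side and no Young splitting can save you; the term must instead be discarded using its favorable sign. Your write-up only flags the $\Lambda$-term $\Lambda\int|A|^3|\nabla A|\gamma^3$ as the dangerous one, so you should add the sign observation; with it, the claimed intermediate inequality $\int|A|^2|\nabla A|^2\gamma^4\leq C\int|\nabla^2A|^2\gamma^4+C\int|A|^6\gamma^4+C\Lambda^2\int|A|^4\gamma^2$ holds and the remaining bookkeeping (absorption of $\int|A|^6\gamma^4$ for $\varepsilon_0$ small, Cauchy--Schwarz on $\int|A|^4\gamma^2$, and $\epsilon^3\leq\varepsilon_0\epsilon^2$) is as you describe.
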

	\begin{proof}
		By a rescaling argument, from \cite[Lemma 4.2]{KSGF} one concludes
		\begin{align}
			\int \abs{A}^6\gamma^4\diff \mu \leq C \int_{[\gamma>0]}\abs{A}^2\diff \mu \int\left(\abs{\nabla^2 A}^2 + \abs{A}^6\right)\gamma^4\diff \mu + C \Lambda^4 \left(\int_{[\gamma>0]}\abs{A}^2\diff \mu\right)^2,
		\end{align}
		where $C$ is an absolute constant. For $\varepsilon_0>0$ small enough absorbing yields the claim.
	\end{proof}
	}
	
	\delete{
	\begin{lem}\label{lem:LinftyEstimate}
		Let $\phi$ be an $\ell\choose 0$-tensor and $\gamma\in C^{\infty}_c(\Sigma)$ with $\norm{\nabla\gamma}{\infty}\leq \Lambda$. Then we have
		\begin{align}\label{eq:LinftyGeneral}
			\norm{\gamma^2\phi}{\infty}^4 \leq C \norm{\gamma^2\phi}{L^2}^2\left[ \int \left(|\nabla^2\phi|^2+\abs{H}^4|\phi|^2\right)\gamma^{4}\diff \mu + \Lambda^4 \int_{[\gamma>0]} \abs{\phi}^2\diff\mu\right].
		\end{align}
		Moreover, there exist universal constants $\varepsilon_0, C\in(0,\infty)$ such that if $\int_{[\gamma>0]} \abs{A}^2\diff \mu <\varepsilon_0$  we have
		\begin{align}\label{eq:LinftyA}
			\norm{\gamma^2 A}{\infty}^4 \leq C \norm{\gamma^2 A}{L^2}^2\left(\int \abs{\nabla^2 A}^2 \gamma^4 \diff \mu+ \Lambda^4\int \abs{A}^2\gamma^4 \diff \mu\right).
		\end{align}
	\end{lem}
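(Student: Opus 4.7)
The plan is to derive both inequalities as consequences of the Michael--Simon--Sobolev inequality on immersed surfaces, which states that for a compactly supported Lipschitz function $u$ one has
\begin{align}
\|u\|_{L^2(\mu)} \leq C \int_\Sigma (|\nabla u| + |H||u|) \diff \mu.
\end{align}
For \eqref{eq:LinftyGeneral}, set $w = \gamma^2 \phi$, which has compact support in $[\gamma>0]$. An $L^\infty$-bound is obtained by two iterations of Michael--Simon (applied to $u = |w|^k$ for appropriate exponents $k$) combined with H\"older's inequality, yielding an estimate of the form
\begin{align}
\|w\|_\infty^2 \leq C \|w\|_{L^2} \bigl(\|\nabla^2 w\|_{L^2} + \|H^2 w\|_{L^2} + \|H \nabla w\|_{L^2}\bigr).
\end{align}
Squaring and using Cauchy--Schwarz to absorb the mixed term produces exactly the structure in \eqref{eq:LinftyGeneral}. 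Expanding $\nabla^2 w = \nabla^2(\gamma^2 \phi)$ by the Leibniz rule and using $|\nabla\gamma|\leq \Lambda$, $|\nabla^2\gamma| \leq \Lambda^2 + \Lambda|A|$ from \eqref{eq:gamma} generates the $\Lambda^4 \int_{[\gamma>0]}|\phi|^2 \diff \mu$ boundary contribution, while the curvature correction appearing twice in the iteration is responsible for the $|H|^4|\phi|^2$ term.

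For \eqref{eq:LinftyA}, apply \eqref{eq:LinftyGeneral} with $\phi = A$; the task reduces to controlling the additional integral $\int |H|^4 |A|^2 \gamma^4 \diff \mu$. Since $|H|^2 \leq 2|A|^2$ by \eqref{eq:AA0H}, this is bounded by $C \int |A|^6 \gamma^4 \diff \mu$, which by \Cref{lem:A^6Estimate} under the smallness hypothesis is dominated by
\begin{align}
C \varepsilon_0 \int |\nabla^2 A|^2 \gamma^4 \diff \mu + C\Lambda^4 \varepsilon_0 \int_{[\gamma>0]}|A|^2 \diff \mu.
\end{align}
The first piece is absorbed into the $\int |\nabla^2 A|^2 \gamma^4 \diff \mu$ term already present in the estimate from \eqref{eq:LinftyGeneral}. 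The passage from $\int_{[\gamma>0]}|A|^2 \diff \mu$ (as appearing in \eqref{eq:LinftyGeneral}) to the weighted quantity $\int |A|^2 \gamma^4 \diff \mu$ in \eqref{eq:LinftyA} is handled by a standard two-scale cutoff trick, applying \eqref{eq:LinftyGeneral} with $\gamma$ replaced by an auxiliary function $\tilde\gamma$ satisfying $\tilde\gamma \equiv 1$ on $\{\gamma > 0\}$, which is always possible up to adjusting $\Lambda$ by a universal factor.

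The main technical obstacle is establishing \eqref{eq:LinftyGeneral} cleanly: the Michael--Simon--Sobolev inequality is the correct intrinsic substitute for the Euclidean Sobolev embedding $W^{2,2} \hookrightarrow L^\infty$ in dimension two, but one must carefully track the curvature corrections through the iteration to obtain precisely the $|H|^4|\phi|^2$ term (rather than higher powers of $|H|$ or mixed $|H||\nabla \phi|$ contributions that would not match the statement). Once \eqref{eq:LinftyGeneral} is established, \eqref{eq:LinftyA} is a direct consequence of the small-curvature hypothesis together with the already-proven $L^6$-bound in \Cref{lem:A^6Estimate}.
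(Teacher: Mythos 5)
Your route is essentially the paper's: the paper proves \eqref{eq:LinftyGeneral} simply by citing \cite[Lemma 2.8]{KSSI}, and the proof of that cited lemma is exactly the Michael--Simon--Sobolev iteration you sketch (via the multiplicative $L^\infty$-interpolation of \cite[Section 5]{KSGF}); for \eqref{eq:LinftyA} the paper likewise reduces $\int|H|^4|A|^2\gamma^4\diff\mu\leq C\int|A|^6\gamma^4\diff\mu$ and invokes \Cref{lem:A^6Estimate}, as you do. One caveat on the first part: absorbing the mixed term $\norm{H\nabla(\gamma^2\phi)}{L^2}$ is not a bare Cauchy--Schwarz. Integrating by parts in $\int H^2|\nabla w|^2\diff\mu$ produces a term containing $\nabla H$, which does not appear in \eqref{eq:LinftyGeneral}; closing the estimate with only $|\nabla^2\phi|^2$ and $|H|^4|\phi|^2$ requires the Kuwert--Sch\"atzle interpolation inequalities. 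You correctly identify this as the technical crux, but as written it is asserted rather than carried out.

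The one step that would actually fail is the ``two-scale cutoff trick'' at the end. Replacing $\gamma$ by $\tilde\gamma$ with $\tilde\gamma\equiv 1$ on $[\gamma>0]$ enlarges every quantity on the right-hand side: you would obtain a bound involving $\norm{\tilde\gamma^2A}{L^2}^2$, $\int|\nabla^2A|^2\tilde\gamma^4\diff\mu$ and $\int_{[\tilde\gamma>0]}|A|^2\diff\mu$, all of which dominate (rather than are dominated by) the corresponding quantities with $\gamma$ in \eqref{eq:LinftyA}, since $\tilde\gamma\geq\gamma$ and $[\tilde\gamma>0]\supset[\gamma>0]$. So this does not convert $\Lambda^4\int_{[\gamma>0]}|A|^2\diff\mu$ into $\Lambda^4\int|A|^2\gamma^4\diff\mu$. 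What your argument together with \Cref{lem:A^6Estimate} (whose error term is $\Lambda^4(\int_{[\gamma>0]}|A|^2\diff\mu)^2$) genuinely yields is \eqref{eq:LinftyA} with $\Lambda^4\int_{[\gamma>0]}|A|^2\diff\mu$ in the last slot. That is the form in which the inequality appears in \cite[Lemma 2.8]{KSSI} and the form in which it is used in the paper (e.g. in deriving \eqref{eq:54} and at the end of the proof of \Cref{thm:HigherOrderEstimatesTLocalized}, where $[\gamma>0]$ is contained in a ball on which $\int|A|^2\diff\mu\leq\varepsilon$). The correct resolution is to prove and use that weaker form and to regard the $\gamma^4$-weight in the displayed last term of \eqref{eq:LinftyA} as an over-statement, not to attempt an upgrade by enlarging the cutoff.
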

	\begin{proof}
		The first statement is as in \cite[Lemma 2.8]{KSSI}. The second then follows from \Cref{lem:A^6Estimate}.
	\end{proof}
}
	
	

	\add{
		\section{Proof of \texorpdfstring{\Cref{lem:curvatureIntegralsEstimate}}{Lemma 3.1}}\label{sec:appL31}
		This section is devoted to proving \Cref{lem:curvatureIntegralsEstimate}. First, we compute a localized version of \eqref{eq:dtW<=0}. {Although the calculations are essentially the same as in \cite[Section 3]{KSGF}, we give some details here how the dependence on $\lambda$ comes into play.}
		
		\begin{lem}\label{lem:EvolutionOfCurvatureIntegrals}
			Let $f\colon[0,T)\times \Sigma\to\R^3$ be a smooth volume-preserving Willmore flow, $\tilde{\eta} \in C_c^{\infty}(\R^3)$ and $\eta\defeq  \tilde{\eta}\circ f$. Then, we have
			\begin{align}
				\partial_t \int\frac{1}{2}H^2\eta\diff \mu + \int \abs{\nabla\overline{\CalW}(f)}^2\eta\diff\mu &= \lambda \int |A^0|^2H\eta  \diff \mu -2 \int \nabla_{sc}\overline{\CalW}(f) \langle\nabla H, \nabla \eta\rangle \diff \mu  \\
				&\quad - \int \nabla_{sc}\overline{\CalW}(f) H \Delta\eta \diff \mu + \int\frac{1}{2}H^2\partial_t \eta \diff\mu \label{eq:dtIntH^2Equation}
			\end{align}
			and
			\begin{align}
				\partial_t \int |A^{0}|^2\eta\diff \mu + \int \abs{\nabla\overline{\CalW}(f)}^2\eta \diff \mu &= \lambda\int |A^0|^2 H \eta\diff \mu -2\int  \nabla_{sc}\overline{\CalW}(f) \langle \nabla H,\nabla \eta\rangle \diff \mu \\
				&\quad - 2 \int  \nabla_{sc}\overline{\CalW}(f) \langle A^{0},\nabla^2\eta\rangle \diff \mu  +  \int|A^{0}|^2\partial_t \eta\diff\mu.\label{eq:dtIntA^0^2Eq}
			\end{align}
		\end{lem}
		\begin{proof}
			We use a (local) orthonormal basis $\{e_i\}_{i=1,2}$. As in \cite[(31) and (32)]{KSSI}, using \eqref{eq:dtdmu} and \eqref{eq:dtH} we find
			\begin{align}
				\partial_t \left(\frac{1}{2}H^2\diff  \mu\right) 
				&= - \abs{\nabla\overline{\CalW}(f)}^2\diff \mu + \lambda \Delta H \diff \mu + \lambda |A^{0}|^2 H\diff \mu + \nabla_i\left(H\nabla_i \xi - \xi\nabla_i H\right)\diff \mu
			\end{align}
			Consequently, we compute using integration by parts
			\begin{align}
				&\partial_t \int\frac{1}{2}H^2\eta\diff \mu + \int \abs{\nabla\overline{\CalW}(f)}^2\eta\diff\mu \\
				&\qquad= \lambda \int (\Delta H + |A^0|^2H)\eta  \diff \mu +\int \left(2\xi\nabla_i H\nabla_i \eta + H\xi\Delta\eta\right)\diff \mu + \int\frac{1}{2}H^2\partial_t \eta \diff\mu.
			\end{align}
			Now, using \eqref{eq:nabla W sc} we observe that 
			\begin{align}
				\int \left(2\xi\nabla_i H\nabla_i \eta + H\xi\Delta\eta\right)\diff \mu &= -2 \int  \nabla_{sc}\overline{\CalW}(f) \nabla_i H \nabla_i \eta \diff \mu + 2\lambda \int \nabla_i H\nabla_i \eta\diff \mu \\
				&\quad - \int \nabla_{sc}\overline{\CalW}(f) H \Delta\eta \diff \mu + \lambda \int H\Delta\eta\diff \mu.
			\end{align}
			{Recalling that $\Delta(H\eta) = \Delta H \eta + 2 \nabla_i H \nabla_i \eta + H\Delta\eta$, the identity \eqref{eq:dtIntH^2Equation} follows.}
			
			For the second identity, 
			we proceed as in \cite[p.~423]{KSSI}. Using \eqref{eq:dtg} and the identity $A^{0}_{ik}A^{0}_{kj}A^{0}_{ij}=0$ (see \cite[(2.5)]{KSGF}), a short computation yields
			\begin{align}
				A^0(\partial_t e_i, e_j) A^0(e_i, e_j) 
				&= \frac{1}{2} |A^0|^2 H\xi. \label{eq:A^0dtei}
			\end{align}
			Applying \eqref{eq:dtdmu}, \eqref{eq:dtA^0} and \eqref{eq:A^0dtei} yields
			\begin{align}
				&\partial_t\left(|A^{0}|^2\diff \mu\right) 
				= 2 \nabla_i(\nabla_j \xi A^{0}(e_i, e_j)) \diff \mu - \nabla_j\xi \nabla_j H\diff\mu + |A^{0}|^2H\xi\diff \mu,
			\end{align}
			where we used \eqref{eq:nabla H A A^0} and the fact that $A^0_{ij}(\nabla^2_{ij}\xi)^{0}= A^0_{ij}\nabla^2_{ij}\xi$ as $A^0$ is trace-free. Consequently we find
			\begin{align}
				\partial_t\left(|A^{0}|^2\diff \mu\right) &= 
				 2\nabla_i(\nabla_j \xi A^{0}(e_i, e_j)) \diff \mu- \nabla_j(\xi \nabla_j H) \diff \mu - \abs{\nabla\overline{\CalW}(f)}^2\diff \mu + \lambda  \nabla_{sc}\overline{\CalW}(f) \diff \mu.
			\end{align}
			Integration by parts and \eqref{eq:nabla H A A^0} then yield
			\begin{align}
				&\partial_t \int |A^{0}|^2\eta\diff \mu + \int \abs{\nabla\overline{\CalW}(f)}^2\eta \diff \mu \\
				&\qquad = -2\int  \nabla_{sc}\overline{\CalW}(f) \nabla_i H\nabla_i \eta\diff \mu - 2 \int  \nabla_{sc}\overline{\CalW}(f) A^{0}_{ij}\nabla^2_{ji} \eta \diff \mu + \int|A^{0}|^2\partial_t \eta\diff\mu\\
				&\qquad \quad + \lambda\left[ - \int \nabla_i H \nabla_i \eta\diff \mu + 2 \int \nabla_i H \nabla_i\eta\diff \mu+ \int \Delta H \eta\diff\mu + \int|A^{0}|^2 H \eta\diff \mu\right]
			\end{align}
			{The claim follows from integrating by parts in the terms involving $\lambda$.}
		\end{proof}
		Equipped with this evolution identity, we can now prove \Cref{lem:curvatureIntegralsEstimate}.
		\begin{proof}[Proof of \Cref{lem:curvatureIntegralsEstimate}]
			Again, we use a local orthonormal basis $\{e_i\}_{i=1,2}$. To prove both inequalities in \Cref{lem:curvatureIntegralsEstimate}, we estimate the evolution in \Cref{lem:EvolutionOfCurvatureIntegrals} {with $\eta=\gamma^4$}. The last term in \eqref{eq:dtIntH^2Equation} and \eqref{eq:dtIntA^0^2Eq} generates an additional term with $\lambda$, since
			\begin{align}\label{eq:dt eta}
				|\partial_t \eta| \leq \delete{4}\add{C} \Lambda \gamma^3  \abs{\partial_ t 	f} \leq \delete{4}\add{C} \Lambda \gamma^{3} \left(\abs{\nabla\overline{\CalW}(f)} + |\lambda|\right).
			\end{align}	
			Therefore, both \eqref{eq:dtIntH^2Equation} and \eqref{eq:dtIntA^0^2Eq} contain two terms involving $\lambda$. The terms without $\lambda$ can be estimated exactly as in \cite[Lemma 3.2]{KSSI} (with $\rho^{-1}=\Lambda$). The claim follows after we estimate the $\lambda$-term generated by $\partial_t\eta$ as in \eqref{eq:dt eta} and keep the term $\lambda\int\abs{A^0}^2H\gamma^4\diff \mu$.
		\end{proof}
	}

	\section{\texorpdfstring{Proof of \Cref{thm:HigherOrderEstimatesTLocalized}}{Proof of Proposition 3.5}}\label{sec:Higherorder}
	\add{This section is devoted to proving \Cref{thm:HigherOrderEstimatesTLocalized}.}
	\delete{In this section, we will prove \Cref{lem:EvolutionOfNabla^mAIntegrals}, {which extends the estimates in \cite[Proposition 3.3 and Proposition 4.5]{KSGF}.}}
	
	\add{Following \cite{KSGF,KSSI}, for tensors $\phi, \psi$ on $\Sigma$, we denote by $\phi*\psi$ any multilinear form, depending on $\phi$ and $\psi$ in a universal bilinear way. In particular, we have $\abs{\phi*\psi}\leq c\abs{\phi}\abs{\psi}$ and $\nabla(\phi*\psi)=\nabla\phi*\psi+\phi*\nabla\psi$. Note that since we are in codimension one, we can work with tensors with scalar {values} and not with normal values.
		
		Moreover, for $m\in \N_0$ and $r\in \N, r\geq 2$ we denote by $P^m_r(A)$ any term of the type
		\begin{align}
			P^m_r(A) = \sum_{i_1+\dots+i_r=m} \nabla^{i_1}A*\dots*\nabla^{i_r}A.
		\end{align}
		{In addition, for $r=1$ we extend this definition by denoting by $P^m_1(A)$ any contraction of $\nabla^mA$ with respect to the metric $g$.}}
		\delete{  
		Given an $\ell\choose 0$-tensor $\psi$ we denote by $\nabla^{\ast}\psi$ the formal adjoint of $\nabla$ given by $\nabla^{\ast} \psi = -(\nabla_{e_i} \psi)(e_i, \dots)$. In this notation the Laplacian on tensors is given by $\Delta = -\nabla^{\ast}\nabla$. An important commutator relation is the identity
		\begin{align}\label{eq:Nabla*NablaCommutator}
			(\nabla \nabla^{\ast} - \nabla^{\ast}\nabla)\nabla \psi= A*A*\nabla\psi + A*\nabla A*\psi
		\end{align}
		for any $\ell\choose 0$-tensor $\psi$, cf. \cite[(2.11)]{KSGF}. 
		Moreover, we recall Simons' identity (cf. \cite{Simons})
		\begin{align}\label{eq:Simons'Identity}
			\Delta A = \nabla^2 H + 2 K A^0 = \nabla^2 H + A*A*A.
		\end{align}
		}
		\add{We can now compute the evolution of higher order derivatives of the second fundamental form.
		\begin{lem}\label{lem:localEvolNabla^mA}
			Let $f\colon [0,T)\times\Sigma\to\R^3$ be a volume-preserving Willmore flow. Then for all $m\in \N_0$ we have
			\begin{align}
				\partial_t (\nabla^m A)+ \Delta^2(\nabla^m A) = P^{m+2}_3(A) + P^m_5(A) + \lambda P^{m}_2(A).
			\end{align}
		\end{lem}
		\begin{proof}
			{First, we note that $H$ is a contraction of $A$ and hence $H=P^{0}_1(A)$, and consequently also $A^0=P^0_1(A)$. Thus, by \eqref{eq:VpWF}, we have 
				\begin{align}\label{eq:dt f in P notation}
					\xi =-\Delta H + P_3^0(A)+\lambda.
				\end{align}
			}
			For $m=0$ we insert this into \eqref{eq:dtA} to obtain
			\begin{align}
				\partial_t A &= \nabla^2 \xi + A*A*\xi = -\nabla^2(\Delta H) + P^2_3(A) + P^0_5(A) + \lambda P_2^0(A),
			\end{align}
			Using \cite[(2.11)]{KSGF} twice, we find $	\nabla^2\Delta H = \Delta\nabla^2 H +P^2_3(A)$, hence by Simons' identity \cite{Simons} we have
			\begin{align}
				\partial_t A = - \Delta^2 A + P^2_3(A)+P^0_5(A)+\lambda P^0_2(A).
			\end{align}
			Assume the statement is true for $m\geq 1$. Using \cite[Lemma 2.3]{KSGF} with $\phi = \nabla^m A$ and the fact that we are in codimension one yields
			\begin{align}
				\partial_t \nabla^{m+1} A + \Delta^2 \nabla^{m+1}A &= \nabla\left(P^{m+2}_3(A)+P^{m}_5(A)+\lambda P^{m}_2(A)\right)\\
				&\quad + \sum_{i+j+k=3} \nabla^{i}A*\nabla^j A*\nabla^{k+m}A \\
				&\quad + A*\nabla\xi*\nabla^m A + \nabla A*\xi*\nabla^m A \\
				& = P^{m+3}_3(A) + P^{m+1}_5(A)+\lambda P^{m+1}_2(A),
			\end{align}
			where we used \eqref{eq:dt f in P notation} in the last step.
		\end{proof}
	}
	
	 \delete{To that end, we first recall the following
	\begin{lem}[{\cite[Lemma 3.2]{KSGF}}]\label{lem:KSGFL3.2}
	Let $f\colon [0,T)\times \Sigma \to\R^3$ be a normal variation, $\partial_t f = \xi \nu$. Let $\phi$ be a $\ell \choose 0$-tensor satisfying $\partial_t \phi + \Delta^2 \phi = Y$. Then for any $\gamma\in C^2([0,T)\times \Sigma)$ and $s\geq 4$ we have
	\begin{align}
		&\frac{\diff}{\diff t} \int |\phi|^2\gamma^s\diff \mu + \int \abs{\nabla^2 \phi}\gamma^s\diff \mu - 2 \int\langle Y, \phi\rangle\gamma^{s}\diff \mu  \\
		&\leq \int A * \phi * \phi* \xi\gamma^s \diff \mu + \int |\phi|^2 s\gamma^{s-1}\partial_t \gamma \diff \mu \\
		&\quad + C\int \abs{\phi}^2\gamma^{s-4} \left(\abs{\nabla\gamma}^4 + \gamma^2\abs{\nabla^2\gamma}^2\right)\diff\mu + C\int \abs{\phi}^2 \left(\abs{\nabla A}^2+\abs{A}^4\right)\gamma^s\diff \mu,
	\end{align}
	where $C=C(s)$.
	\end{lem}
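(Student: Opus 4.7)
The plan is to compute $\frac{\diff}{\diff t}\int|\phi|^2\gamma^s\diff\mu$ directly, substitute the evolution equation for $\phi$, and then perform two integrations by parts on the resulting $\Delta^2\phi$ term in order to produce the principal term $-\int|\nabla^2\phi|^2\gamma^s\diff\mu$, with the remaining error terms absorbed using Young's inequality. First, I would differentiate under the integral. By \eqref{eq:dtg}, the time derivative of the metric acts on the contractions defining $|\phi|^2$ and yields a term of type $A*\phi*\phi*\xi$, and by \eqref{eq:dtdmu} the measure derivative contributes $-H\xi|\phi|^2\diff\mu$, which is also of type $A*\phi*\phi*\xi$. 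The contribution from $\partial_t \gamma^s = s\gamma^{s-1}\partial_t\gamma$ gives the second term on the right-hand side. Finally, substituting $\partial_t \phi = -\Delta^2\phi + Y$ produces $2\int\langle Y,\phi\rangle\gamma^s\diff\mu$ (which is moved to the left) and the fourth-order term $-2\int \langle\phi,\Delta^2\phi\rangle\gamma^s\diff\mu$.

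The key step is to rewrite
\begin{equation*}
-2\int \langle \phi,\Delta^2\phi\rangle\gamma^s\diff\mu \leq -\int |\nabla^2\phi|^2\gamma^s\diff\mu + (\text{error}),
\end{equation*}
where the error fits in the two cutoff-dependent terms on the right-hand side of the claimed inequality. I would integrate by parts twice: writing $\Delta^2=\nabla^*\nabla\,\nabla^*\nabla$, one IBP gives
\begin{equation*}
-2\int\langle\phi,\Delta^2\phi\rangle\gamma^s\diff\mu = -2\int\langle\nabla\phi,\nabla\Delta\phi\rangle\gamma^s\diff\mu - 2s\int\langle\nabla\phi\otimes\nabla\gamma,\Delta\phi\rangle\gamma^{s-1}\diff\mu,
\end{equation*}
and a second IBP, combined with the commutator identity \eqref{eq:Nabla*NablaCommutator} to exchange $\nabla$ and $\Delta$ (at the cost of $A*A*\nabla\phi*\phi + A*\nabla A*\phi*\phi$ terms), yields the principal term $-\int|\nabla^2\phi|^2\gamma^s\diff\mu$ together with cutoff remainders of the schematic form $\int|\nabla^2\phi||\nabla\phi||\nabla\gamma|\gamma^{s-1}$, $\int|\nabla^2\phi||\phi||\nabla^2\gamma|\gamma^{s-1}$, and $\int|\nabla\phi|^2|\nabla\gamma|^2\gamma^{s-2}$.

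Each cross term is treated by Young's inequality with a small parameter to absorb $|\nabla^2\phi|^2\gamma^s$ back into the principal term, leaving only quantities involving $|\nabla\phi|^2$ and $|\phi|^2$ weighted by derivatives of $\gamma$. The $|\nabla\phi|^2$ contributions are eliminated via the standard identity $\int|\nabla\phi|^2 \eta\,\diff\mu = -\int\langle\phi,\Delta\phi\rangle\eta\,\diff\mu - \int\langle\nabla\phi,\phi\otimes\nabla\eta\rangle\diff\mu$ applied with $\eta$ an appropriate power of $\gamma$ times $|\nabla\gamma|^2$; another application of Young transfers everything into $|\phi|^2$ times $\gamma^{s-4}(|\nabla\gamma|^4+\gamma^2|\nabla^2\gamma|^2)$. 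The commutator curvature terms $A*A*\nabla\phi*\phi$ and $A*\nabla A*\phi*\phi$ are handled similarly: writing $|A|^2|\nabla\phi||\phi|\leq \varepsilon|\nabla^2\phi|^2\gamma^s + C_\varepsilon|\phi|^2|A|^4\gamma^s + \text{cutoff terms}$ and using Young on $|A||\nabla A||\phi|^2$, one obtains the final curvature contribution $C\int|\phi|^2(|\nabla A|^2+|A|^4)\gamma^s\diff\mu$.

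The main obstacle is purely combinatorial bookkeeping: tracking the precise powers of $\gamma$ that appear after each integration by parts, to ensure that every auxiliary cutoff term collapses cleanly into the form $\gamma^{s-4}(|\nabla\gamma|^4+\gamma^2|\nabla^2\gamma|^2)$. The hypothesis $s\geq 4$ is exactly what guarantees the exponent $s-4\geq 0$ in the final bound, so the estimates remain pointwise finite when $\gamma$ vanishes; this is where the assumption on $s$ enters, and careful choice of the Young parameters at each step (depending only on $s$) produces the final constant $C=C(s)$.
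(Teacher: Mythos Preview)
The paper does not supply its own proof of this lemma; it is quoted verbatim from \cite[Lemma 3.2]{KSGF} and used as a black box in Appendix~E. Your sketch reproduces exactly the argument given in \cite{KSGF}: differentiate under the integral (picking up the $A*\phi*\phi*\xi$ and $s\gamma^{s-1}\partial_t\gamma$ terms from \eqref{eq:dtg} and \eqref{eq:dtdmu}), integrate the $-2\int\langle\phi,\Delta^2\phi\rangle\gamma^s$ term by parts twice with the commutator \eqref{eq:Nabla*NablaCommutator}, and absorb the cross terms via Young and the first-order interpolation identity to land on the $\gamma^{s-4}(|\nabla\gamma|^4+\gamma^2|\nabla^2\gamma|^2)$ and $|\phi|^2(|\nabla A|^2+|A|^4)$ buckets. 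There is nothing to compare---your route \emph{is} the reference's route. One small remark: with the convention $\Delta=-\nabla^*\nabla$ used here, the first integration by parts actually gives $+2\int\langle\nabla\phi,\nabla\Delta\phi\rangle\gamma^s$ plus cutoff terms (your displayed sign is flipped), but this is harmless since the second IBP then produces the principal term $-2\int|\nabla^2\phi|^2\gamma^s$, half of which is retained after absorption.
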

	}
	\add{
		In analogy to \cite[Proposition 3.3]{KSSI}, we have localized energy estimates for higher order derivatives of $A$.
		\begin{lem}\label{lem:EvolutionOfNabla^mAIntegrals}
			Let $f\colon [0,T)\times \Sigma\to\R^3$ be a volume-preserving Willmore flow and $\gamma$ as in \eqref{eq:gamma}. Then for $\phi=\nabla^m A, m\in \N_0$ and $s\geq 2m+4$ we have
			\begin{align}
				&\frac{\diff}{\diff t} \int |\phi|^2\gamma^s\diff \mu + \frac{1}{2} \int \abs{\nabla^2 \phi}\gamma^s\diff \mu \\
				&\leq  C\left( \abs{\lambda}^{\frac{4}{3}}  + \norm{A}{L^{\infty}([\gamma>0])}^4\right) \int\abs{\phi}^2\gamma^s\diff \mu  + C\left(1+ \abs{\lambda}^{\frac{4}{3}}+ \norm{A}{L^{\infty}([\gamma>0])}^4\right)\int_{[\gamma>0]}\abs{A}^2\diff\mu
			\end{align}
			where $C=C(s,m, \Lambda)>0$.
		\end{lem}
	}

\begin{proof}
	In the following, note that the value of $C=C(s,m,\Lambda)$ is allowed to change from line to line. \add{Using \cite[Lemma 3.2]{KSGF}, we find}
	\begin{align}
		&\frac{\diff}{\diff t} \int |\phi|^2\gamma^s\diff \mu + \int \abs{\nabla^2 \phi}\gamma^s\diff \mu \\
		&\leq   2 \int\langle Y, \phi\rangle\gamma^{s}\diff \mu + \int  A * \phi * \phi* \xi\gamma^s \diff \mu + \int |\phi|^2 s\gamma^{s-1}\partial_t \gamma \diff \mu \\
		&\quad + C\int \abs{\phi}^2\gamma^{s-4} \left(\abs{\nabla\gamma}^4 + \gamma^2\abs{\nabla^2\gamma}^2\right)\diff\mu + C\int \abs{\phi}^2 \left(\abs{\nabla A}^2+\abs{A}^4\right)\gamma^s\diff \mu,\label{eq:nabla^mApartial_tgamma0}
	\end{align}
	where $\partial_t \phi +\Delta^2 \phi = Y$ and $\xi= P^2_1(A)+P_3^0(A)+\lambda$ by \eqref{eq:dt f in P notation}. By \Cref{lem:localEvolNabla^mA}, we have
	\begin{align}
		&2\int \langle Y, \phi\rangle\gamma^s\diff \mu +\int A*\phi*\phi*\xi\gamma^s \diff \mu + C\int|\phi|^2(\abs{\nabla A}^2+\abs{A}^4)\gamma^s\diff\mu \\
		&\quad = \int \left(P^{m+2}_3(A)+P^m_5(A)\right)*\phi\gamma^s\diff \mu + \lambda \int P^m_2(A)*\phi\gamma^{s}\diff \mu.\label{eq:nabla^mApartial ohne dtgamma}
	\end{align}
	Moreover, by \eqref{eq:gamma} we find
	\begin{align}\label{eq:nabla^mApartial_tgamma}
		\int |\phi|^2\gamma^{s-1}\partial_t \gamma\diff \mu &= \int \abs{\phi}^2\gamma^{s-1}\langle D\tilde{\gamma}\circ f, \nu\rangle \left(-{\Delta H} - |A^0|^2 H + \lambda\right)\diff \mu.
	\end{align}
	{We proceed by estimating all the terms involving $\lambda$ in \eqref{eq:nabla^mApartial ohne dtgamma} and \eqref{eq:nabla^mApartial_tgamma}. For the $\lambda$-term on the right hand side of \eqref{eq:nabla^mApartial ohne dtgamma}, using \cite[Corollary 5.5]{KSGF} with $k=m$, $r=3$ we find
	\begin{align}\label{eq:higherorderlambda2}
		\lambda\int P^m_2(A)*\phi\gamma^s\diff \mu \leq C(s,m, \Lambda) \abs{\lambda} \norm{A}{L^\infty([\gamma>0])} \left(\int \abs{\phi}^2\gamma^s\diff \mu + \int_{[\gamma>0]} \abs{A}^2\diff \mu\right).
	\end{align}
	The $\lambda$-term on the right hand side of \eqref{eq:nabla^mApartial_tgamma} is estimated using Young's inequality with $p=\frac{4}{3}$ and $q=4$ to obtain
\begin{align}\label{eq:higherorderlambda1}
	C\abs{\lambda} \int \abs{\phi}^2\gamma^{s-1}\diff \mu &\leq C\abs{\lambda}^{\frac{4}{3}} \int\abs{\phi}^2\gamma^{s}\diff \mu + C\int \abs{\phi}^2\gamma^{s-4}\diff \mu.
\end{align}
	Consequently, we find from \eqref{eq:nabla^mApartial_tgamma0}, \eqref{eq:nabla^mApartial ohne dtgamma}, \eqref{eq:nabla^mApartial_tgamma} and Young's inequality
	\begin{align}
		&\frac{\diff}{\diff t} \int |\phi|^2\gamma^s\diff \mu + \int \abs{\nabla^2 \phi}\gamma^s\diff \mu \\
		&\quad\leq  \int \left(P^{m+2}_3(A)+P^m_5(A)\right)*\phi\gamma^s\diff \mu +
		\int \abs{\phi}^2\gamma^{s-1}\langle D\tilde{\gamma}\circ f, \nu\rangle \left(-{\Delta H} - |A^0|^2 H \right)\diff \mu\\
		&\qquad + C\left(\abs{\lambda}^{\frac{4}{3}} + \norm{A}{L^{\infty}([\gamma>0])}^4\right) \int\abs{\phi}^2\gamma^s\diff \mu +C \left(\abs{\lambda}^{\frac{4}{3}}+ \norm{A}{L^\infty([\gamma>0])}^4\right) \int_{[\gamma>0]} \abs{A}^2\diff \mu \\
		&\qquad + C\int \abs{\phi}^2\gamma^{s-4}\diff \mu +\int \abs{\phi}^2\gamma^{s-4} \left(\abs{\nabla\gamma}^4 + \gamma^2\abs{\nabla^2\gamma}^2\right)\diff\mu.\label{eq:D6}
	\end{align}
	Now, all the terms involving $\lambda$ on the right hand side of \eqref{eq:D6} are as in the statement.  For the second {and the last} term in \eqref{eq:D6}, one may proceed exactly as in the proof of \cite[Proposition 3.3]{KSGF}. This way, one creates additional terms which can be estimated by
	\begin{align}
		&\int\abs{\phi}^2\gamma^{s-4}\diff \mu + \int\abs{\nabla\phi}^2\gamma^{s-2}\diff \mu\leq \varepsilon \int|\nabla^2\phi|^2\gamma^s\diff \mu + C_\varepsilon \int_{[\gamma>0]}\abs{A}^2\gamma^{s-4-2m}\diff \mu,
	\end{align}
for every $\varepsilon>0$, using twice the interpolation inequality \cite[Corollary 5.3]{KSGF} (which trivially also holds in the case $k=m=0$). The first term on the right hand side of \eqref{eq:D6} can then be estimated by means of \cite[(4.15)]{KSGF}. After choosing $\varepsilon>0$ small enough and absorbing, the claim follows.
}
\end{proof}

\add{
\Cref{thm:HigherOrderEstimatesTLocalized} can now be deduced from a Gronwall-type argument exactly as in \cite[Theorem 3.5]{KSSI}. To keep track of the role of $\lambda$, we give the details here.	
	
\begin{proof}[Proof of \Cref{thm:HigherOrderEstimatesTLocalized}]
	\delete{For $\rho>0$, we may rescale as in \Cref{rem:ParabolicScaling} to obtain the volume-preserving Willmore flow $\tilde{f}$ on $[0, \tilde{T})$ with $\tilde{T}=\rho^{-4}T \leq T^{*}$ and the same bound on the $L^{4/3}$-norm of the Lagrange multiplier. Moreover, $\tilde{f}$ satisfies the assumptions of the statement with $\tilde{\rho}=1$.
		If \eqref{eq:highorder local in time estimate} holds for $\tilde{f}$, it is not difficult to see that the desired estimates in for $f$ follow, since the estimates in \eqref{eq:highorder local in time estimate} scale with the correct power of $\rho$. Hence, without loss of generality, we may assume $\rho=1$.}
	\add{Without loss of generality, after rescaling as in \Cref{rem:ParabolicScaling}, we may assume $\rho=1$.}
	
	We pick a cutoff function $\tilde{\gamma}\in C^\infty_c(\R^3)$ with $\chi_{B_{3/4}(x)}\leq \tilde{\gamma} \leq \chi_{B_1(x)}$ such that $\gamma\defeq \tilde{\gamma}\circ f$ is as in \eqref{eq:gamma} with a universal constant $\Lambda>0$. Now, using \Cref{prop:New30}, we deduce
	\begin{align}\label{eq:53}
		\int_0^T \int_{B_{3/4}(x)} \left(\abs{\nabla^2 A}^2+\abs{A}^6\right)\diff \mu \diff t \leq C\varepsilon+ C \Lambda^4 \varepsilon T+ C\varepsilon L = C(T^*,L)\varepsilon,
	\end{align}
	using $T\leq T^{*}$. Consequently, by using \delete{the interpolation inequality in \cite[Lemma 2.8]{KSGF}} we find
	\begin{align}\label{eq:54}
		\int_0^T \norm{A}{L^{\infty}(B_{3/4}(x))}^4 \diff t\leq C(T^*,L)\varepsilon.
	\end{align}
	Now, we change to another test function $\tilde{\gamma}\in C^{\infty}_c(\R^3)$ with  $\chi_{B_{1/2}(x)}\leq \tilde{\gamma} \leq \chi_{B_{3/4}(x)}$ and $\gamma\defeq \tilde{\gamma}\circ f$. Note that \eqref{eq:gamma} still remains satisfied with a universal $\Lambda>0$. We now define Lipschitz cutoff functions in time via
	\begin{align}
		\xi_j(t) \defeq \left\lbrace\begin{array}{ll}
			0,& \text{for } t\leq (j-1)\frac{T}{m}, \\
			\frac{m}{T}\left(t-(j-1)\frac{T}{m}\right),& \text{for } (j-1)\frac{T}{m}\leq t\leq j\frac{T}{m} \\
			1, &\text{for } t\geq j\frac{T}{m},
		\end{array}\right.
	\end{align}
	where $m\in \N$ and $0\leq j \leq m$. We also define $\xi_{-1}(t)\defeq 0$ and  $\xi_0(t)\defeq 
	1$ for all $t\in \R$ if $m=0$. We note that $\xi_m(T) =1$ and 
	\begin{align}\label{eq:xi'bound}
		0\leq  \frac{\diff}{\diff t}{\xi}_j \leq \frac{m}{T}\xi_{j-1},\quad \text{for all }j\in \N_0.
	\end{align}
	We now define $a(t) = \norm{A}{L^{\infty}(B_{3/4}(x))}^4, E_j(t) = \int |\nabla^{2j} A|^2\gamma^{4j+4}\diff \mu$.
	Then, by \Cref{lem:EvolutionOfNabla^mAIntegrals} and using $\gamma\leq 1$ we have
	\begin{align}
		\frac{\diff}{\diff t} E_j(t) + \frac{1}{2} E_{j+1}(t) &\leq C(j,m)\left(\abs{\lambda(t)}^{\frac{4}{3}} + a(t)\right) E_j(t) + C(j,m)\left(1+\abs{\lambda(t)}^{\frac{4}{3}} + a(t)\right) \varepsilon.
	\end{align}
	Therefore, if we define $e_j \defeq \xi_j E_j$ this implies using \eqref{eq:xi'bound}
	\begin{align}
		\frac{\diff}{\diff t} e_j(t) &\leq \frac{m}{T}\xi_{j-1}(t)E_j(t) + C(j,m)\left(\abs{\lambda(t)}^{\frac{4}{3}} + a(t)\right) e_j(t) \\
		&\quad + C(j,m)\left(1+\abs{\lambda(t)}^{\frac{4}{3}} + a(t)\right) \varepsilon	-\frac{1}{2} \xi_j(t) E_{j+1}(t).\label{eq:56}
	\end{align}
	We will now show that this implies for $0\leq j\leq m$ and $t\in (0,T)$
	\begin{align}\label{eq:HigherOrderLocalizedInduction}
		e_j(t) + \frac{1}{2} \int_0^t \xi_j(s) E_{j+1}(s)\diff s \leq \frac{C(j,m,T^{\ast},L) \varepsilon}{T^j}.
	\end{align}
	We proceed by induction on $j$. For $j=0$ we have $\xi_0 \equiv 1$ on $(0,T)$. Therefore, we have $e_0 =\int |A|^2\gamma^4\diff \mu \leq \varepsilon$ by assumption. Moreover, by \eqref{eq:53} we find  $\int_0^tE_{1}(s)\diff s = \int_0^t\int |\nabla^2 A|^2 \gamma^{8}\diff \mu \diff s \leq C(T^{\ast},L)\varepsilon$.
	
	For $j\geq 1$ we have, integrating \eqref{eq:56} on  $[0,t]$ and using $e_j(0)=0$
	\begin{align}
		&e_j(t) + \frac{1}{2} \int_0^t \xi_j(s)E_{j+1}(s)\diff s\\
		&\quad \leq C(j,m) \int_0^t \left(\abs{\lambda(s)}^{\frac{4}{3}} + a(s)\right)e_j(s)\diff s + C(j,m) \varepsilon \int_0^t\left(1+\abs{\lambda(s)}^{\frac{4}{3}} + a(s)\right) \diff s \\
		&\qquad + \frac{m}{T} \int_0^t \xi_{j-1}(s) E_j(s) \diff s \\
		&\leq  C(j,m) \int_0^t \left(\abs{\lambda(s)}^{\frac{4}{3}} + a(s)\right)e_j(s)\diff s + C(j,m, T^*, L) \varepsilon  + \frac{C(j,m,T^*,L)\varepsilon}{T^{j-1}}~\frac{m}{T} \\
		&\leq C(j,m) \int_0^t \left(\abs{\lambda(s)}^{\frac{4}{3}} + a(s)\right)e_j(s)\diff s + \frac{C(j,m,T^{\ast},L)\varepsilon}{T^j},
	\end{align}
	using \eqref{eq:54}, the induction hypothesis and $T\leq T^*$. Therefore, Gronwall's inequality yields using \eqref{eq:53} and \eqref{eq:54}
	\begin{align}
		e_j(t) &\leq - \frac{1}{2} \int_0^t \xi_{j}(s)E_{j+1}(s)\diff s + \frac{C(j,m,T^{\ast},L)\varepsilon}{T^j} \\
		&\quad + \int_0^t \frac{C(j,m,T^{\ast},L)\varepsilon}{T^j}\left(\abs{\lambda(s)}^{\frac{4}{3}} + a(s)\right) \exp\left(C(T^*,L)\right)\diff s \\
		&\leq - \frac{1}{2} \int_0^t \xi_{j}(s)E_{j+1}(s)\diff s + \frac{C(m,L,T^{\ast})\varepsilon}{T^j},
	\end{align}
	which proves \eqref{eq:HigherOrderLocalizedInduction}. Now evaluating at $t=T$ with $j=m$, we find
	\begin{align}
		\int |\nabla^{2m}A|^2\gamma^{4m+4}\diff \mu \leq \frac{C(m,L,T^{\ast})\varepsilon}{T^{m}}\quad\text{for all } m\in\N.
	\end{align}
	The estimate for $\nabla^{2m+1}A$ follows from the interpolation inequality in \cite[Lemma 5.1]{KSGF} with $r=1, p=q=2, \alpha=1, \beta=0, s=4m+6$ and $t=\frac{1}{2}\in [-\frac{1}{2}, \frac{1}{2}]$. Renaming $T$ into $t$ proves the $L^2$-estimate. The $L^{\infty}$-estimate then follows using the $L^\infty$-interpolation estimate in \cite[Lemma 2.8]{KSGF}, together with \cite[Lemma 4.2]{KSGF}.
\end{proof}}

\section{\texorpdfstring{Proof of \Cref{lem:LojaAsymStabil}}{Proof of Lemma 7.9}}\label{sec:proof_asympt_stabil}
\begin{proof}[{Proof of \Cref{lem:LojaAsymStabil}}]
	We follow \cite[Lemma 4.1]{CFS09}. There exists a diffeomorphism \linebreak$\Phi\colon\Sigma\to \Sigma$, such that for $\varepsilon>0$ small enough, $f_0\circ \Phi$ can be written as a normal graph over $f_W$, i.e.
	\begin{align}
		f_0\circ \Phi = f_W+ \nu_{f_W}\varphi_0 \eqdef \tilde{f}_0,
	\end{align}
	for some $\varphi_0\colon\Sigma\to\R$, such that 
	\begin{align}\label{eq:gl ex 0}
		\norm{\varphi_0}{C^{4,\alpha}} \leq C\varepsilon,
	\end{align}
	for $C$ independent of $\varepsilon$. We now wish to solve the equation
	\begin{align}\label{eq:PDE repara}
		\partial_t^{\perp}\tilde{f}_t = - \nabla\overline{\CalW}(\tilde{f}_t) + \lambda(\tilde{f}_t) \nu_{\tilde{f}_t},
	\end{align} 
	with initial datum $\tilde{f}_0$, where $\partial_t^{\perp}= P^{\perp_{\tilde{f}_t}}\partial_t$ and $\tilde{f}_t\defeq f_W+\varphi_t\nu_{f_W}$, for smooth functions $\varphi_t \colon\Sigma\to\R$. By \eqref{eq:PDE repara} and \eqref{eq:nabla W sc} (as in \cite[(4.4)]{CFS09} with codimension one) we compute
	\begin{align}
		\partial_t (\varphi_t) P^{\perp_{\tilde{f}_t}}\nu_{f_W} &= -(\Delta H_{\tilde{f}_t} + \abs{A^0_{\tilde{f}_t}}^2H_{\tilde{f}_t})\nu_{\tilde{f}_t} + \lambda(\tilde{f}_t)\nu_{\tilde{f}_t}  \\
		&= - (g_{\tilde{f}_t}^{ij}g_{\tilde{f}_t}^{k\ell} \partial_{ijk\ell}\varphi_t)P^{\perp_{\tilde{f}_t}}\nu_{f_W} \\
		& \quad + \left(1+ \int_{\Sigma}  B_0(\cdot, \varphi_t, D\varphi_t, D^2\varphi_t)\diff \mu_{f_W}\right) B_1(\cdot, \varphi_t, D\varphi_t, D^2 \varphi_t, D^3\varphi_t),\label{eq:gl ex 1}
	\end{align}
	using \eqref{eq:deflambda}, where $B_0, B_1$ are smooth functions depending on $f_W$. {Note that the nonlocal terms appear due to $\lambda$.} Now, if $\norm{\tilde{f}_t-f_W}{C^1}\leq \delta$ is small enough, $g_{\tilde{f}_t}^{ij}g_{\tilde{f}_t}^{k\ell}$ is uniformly elliptic and we may assume that 
	\begin{align}\label{eq:gl ex 1.5}
		\abs{P^{\perp_{\tilde{f}_t}} X} \geq \abs{X}- \abs{P^{\top_{f_W}}X- P^{\top_{\tilde{f}_t}}X} \geq \frac{1}{2}\abs{X}, \text{ for all } X \text{ normal along }f_W.
	\end{align}
	Therefore, \eqref{eq:gl ex 1} is equivalent to 
	\begin{align}
		&\partial_t \varphi_t + g_{\tilde{f}_t}^{ij}g_{\tilde{f}_t}^{k\ell} \partial_{ijk\ell}\varphi_t  \\
		&\qquad=\left(1+ \int_{\Sigma}  B_0(\cdot, \varphi_t, D\varphi_t, D^2\varphi_t)\diff \mu_{f_W}\right) B_1(\cdot, \varphi_t, D\varphi_t, D^2\varphi_t, D^3\varphi_t).\label{eq:gl ex 2}
	\end{align}
	Since the right hand side of \eqref{eq:gl ex 2} is only of third order in $\varphi_t$, it is not too difficult to see that the \add{parabolic initial value} problem \eqref{eq:gl ex 2} with initial datum $\varphi_0$ satisfying \eqref{eq:gl ex 0} has a unique local solution in the H\"older space $H^{\frac{k+\alpha}{4}, k+\alpha}([0, T_1]\times \Sigma;\R)$ for some $0<T_1\leq T$. This follows from maximal regularity results for linear parabolic problems, in H\"older spaces, cf. \cite{LSU68}, and a fixed-point argument using the contraction principle, \add{see also \Cref{prop:STE} and the corresponding references.} Here the order reduction for $\lambda$ discussed in \Cref{subsec:PDE} is crucial.
	\add{Now, we apply \Cref{thm:Loja} to $f_W$. By the embedding $C^{4}(\Sigma)\hookrightarrow W^{4,2}(\Sigma)$, we may assume that the constrained {\L}ojasiewicz--Simon inequality is satisfied for all $\norm{h-f_W}{C^{4}}\leq \sigma$ with exponent $\theta\in (0,\frac{1}{2}]$.}
	 Choosing $\varepsilon>0$ sufficiently small, we may without loss of generality assume $C\varepsilon<\sigma<\delta$ with $\sigma$ as in \Cref{thm:Loja} and that $T_1$ is the maximal existence interval for \eqref{eq:gl ex 2} \add{for which we have} (as part of our definition of $T_1$)
	\begin{align}\label{eq:gl ex 4}
		\norm{\tilde{f}_t-f_W}{C^k}\leq \sigma<\delta \text{ for all }t\in [0,T_1).
	\end{align}
	By parabolic Schauder estimates, from \eqref{eq:gl ex 2} and \eqref{eq:gl ex 0} we obtain a bound on the parabolic Hölder space norm, i.e.\ $\norm{\varphi}{H^{\frac{k+\alpha}{4},k+\alpha}}\leq C$, and hence for $k$ as in the statement
	\begin{align}\label{eq:gl ex 5}
		\norm{\tilde{f}_t-f_W}{C^{k, \alpha}}\leq C \text{ for all }t\in [0, T_1).
	\end{align}
	\delete{As a next step, we will  establish a connection between the two flows.} By \eqref{eq:PDE repara}, we find
	\begin{align}
		\partial_t \tilde{f}_t + \xi_t d\tilde{f}_t = -\nabla\overline{\CalW}(\tilde{f}_t)+\lambda(\tilde{f}_t)\nu_{\tilde{f}_t},
	\end{align}
	where $\xi_t$ denotes the tangential velocity. Next, by classical flow theory, see for instance \cite[Chapter 17]{Lee}, there exists a unique smooth family of diffeomorphisms satisfying
	\begin{align}
		\partial_t \Phi_t &= \xi_t \circ\Phi_t \text{ on }\Sigma\text{ for }0\leq t<T_1 \\
		\Phi_0& = \Id_{\Sigma}.
	\end{align}
	A direct calculation yields $\partial_t (\tilde{f}_t \circ \Phi_t) = - \nabla\overline{\CalW}(\tilde{f}_t \circ \Phi_t) + \lambda(\tilde{f}_t\circ \Phi_t) \nu_{\tilde{f}_t\circ \Phi_t}$, so 
	\begin{align}
		[0, T_1)\times \Sigma \to\R^3, (t,p)\mapsto\tilde{f}_t \circ \Phi_t \circ \Phi^{-1}(p)
	\end{align}
	is a smooth volume-preserving Willmore flow with initial data $\tilde{f}_0 \circ \Phi_0\circ \Phi^{-1} = f_0$. As the solution to the volume-preserving Willmore flow is unique, {cf. \Cref{prop:STE}}, we conclude $T_1\leq T$ and
	\begin{align}
		f_t = \tilde{f}_t \circ \Phi_t \circ \Phi^{-1} \text{ for all }0\leq t<T_1.
	\end{align}
	It suffices to prove that $\tilde{f}$ is global and converges as $t\to\infty$ to a smooth Willmore immersion $f_\infty$ with the desired properties. 
	
	First, we show that we may assume $\overline{\CalW}(f_t)>\overline{\CalW}(f_W)$ for all $t\in [0, T_1)$. By assumption and \eqref{eq:gl ex 4}, we have $\overline{\CalW}({f}_t)\geq \overline{\CalW}(f_W)$. If $\overline{\CalW}(f_t)=\overline{\CalW}(f_W)$ for some $t\in [0,T_1)$, \add{then by \Cref{rem:W strict Lyapunov}, $f$ and $\tilde{f}$ are stationary and the claim follows. Hence, we may indeed assume the strict inequality $\overline{\CalW}(f_t)>\overline{\CalW}(f_W)$.}
	
	Let $\theta,C$ as in \Cref{thm:Loja}. By \eqref{eq:dtV=0}, \eqref{eq:gl ex 4} and \add{since $\CalV(f_t)=\CalV(f_W)$, we may apply the constrained {\L}ojasiewicz--Simon gradient inequality to obtain}
	\begin{align}
		&-\frac{\diff}{\diff t}\left(\overline{\CalW}(f_t)-\overline{\CalW}(f_W)\right)^{\theta} \\
		&\quad= -\theta \left(\overline{\CalW}(f_t)-\overline{\CalW}(f_W)\right)^{\theta-1} \langle \nabla\overline{\CalW}(\tilde{f}_t), \partial_t^{\perp} \tilde{f}_t\rangle_{L^2(\diff \mu_{\tilde{f}_t})} \\
		& \quad = -\theta \left(\overline{\CalW}(f_t)-\overline{\CalW}(f_W)\right)^{\theta-1} 
		\langle \nabla\overline{\CalW}(\tilde{f}_t) - \lambda(\tilde{f}_t)\nu_{\tilde{f}_t}, \partial_t^{\perp} \tilde{f}_t\rangle_{L^2(\diff \mu_{\tilde{f}_t})}\\
		&\quad = \theta \left(\overline{\CalW}(f_t)-\overline{\CalW}(f_W)\right)^{\theta-1} 
		\norm{\nabla\overline{\CalW}(\tilde{f}_t) - \lambda(\tilde{f}_t)\nu_{\tilde{f}_t}}{L^2(\diff \mu_{\tilde{f}_t})} \norm{\partial_t^{\perp} \tilde{f}_t}{L^2(\diff \mu_{\tilde{f}_t})}\\
		&\quad \geq \frac{\theta}{C} \norm{\partial_t^{\perp}\tilde{f}_t}{L^2(\diff\mu_{\tilde{f}_t})}
	\end{align}
	for $0\leq t<T_1$. Now, using \eqref{eq:gl ex 1.5}, \eqref{eq:gl ex 4} and the resulting equivalence of the metrics $g_{f_W}$ and $g_{\tilde{f}_t}$, we find
	\begin{align}\label{eq:gl ex 6}
		\norm{\partial_t \tilde{f}_t}{L^2(\diff \mu_{f_W})} \leq - \frac{C}{\theta} \frac{\diff}{\diff t}\left(\overline{\CalW}(f_t)-\overline{\CalW}(f_W)\right)^{\theta} \text{ for every }t\in [0, T_1).
	\end{align}
	{Integrating in time and using the triangle inequality we find}
	\begin{align}
		\norm{\tilde{f}_t-f_W}{L^2(\diff \mu_{f_W})}&\leq  \norm{\tilde{f}_0-f_W}{L^2(\diff \mu_{f_W})} + C\left(\overline{\CalW}(\tilde{f}_0)-\overline{\CalW}(f_W)\right)^{\theta} \\
		&\leq C \norm{\tilde{f}_0-f_W}{C^2(\Sigma)}^{\theta},
	\end{align}
	using the mean value theorem for the Willmore energy and assuming that $\varepsilon>0$ is small enough.
	\add{As in \cite[p.~361]{CFS09}, by interpolation for some $\beta\in (0,1)$ we find} for $t\in [0, T_1)$ and $k$ as in the statement, using \eqref{eq:gl ex 5} and \eqref{eq:gl ex 0}
	\begin{align}
		\norm{\tilde{f}_t-f_W}{C^k(\Sigma)}&\leq C \norm{\tilde{f}_t-f_W}{C^{k, \alpha}(\Sigma)}^{1-\beta} \norm{\tilde{f}_t-f_W}{L^2(\Sigma, \diff\mu_{f_W})}^{\beta}\\
		&\leq C \norm{\tilde{f}_0-f_W}{C^2(\Sigma)}^{\beta\theta} \leq C \varepsilon^{\beta\theta} \leq \frac{\sigma}{2},\label{eq:gl ex 7}
	\end{align}
	if $\varepsilon>0$ is sufficiently small. Since $T_1>0$ is chosen maximal with respect to \eqref{eq:gl ex 4}, this implies $T_1=\infty$, which yields that $\tilde{f}$ exist globally and satisfies $\norm{\tilde{f}_t-f_W}{C^k(\Sigma)}\leq \sigma$ for all $t\geq 0$. Therefore, \eqref{eq:gl ex 6} yields $\partial_t \tilde{f}_t \in L^1([0, \infty);L^2(\Sigma,\diff \mu_{f_W}))$, and consequently, there exists $f_\infty\defeq \lim_{t\to\infty}\tilde{f}_t$ in $L^2(\Sigma, \diff \mu_{f_W})$. Similar to \eqref{eq:gl ex 7}, an interpolation argument and \eqref{eq:gl ex 5}  yield $\lim_{t\to\infty}\tilde{f}_t = f_\infty$ in $C^k(\Sigma)$. {By parabolic Schauder estimates, one can then obtain $L^{\infty}$-bounds on higher order derivatives, such that by interpolation again, one can show that the convergence $\lim_{t\to\infty}\tilde{f}_t=f_\infty$ is even smooth.} Since the volume-preserving Willmore flow is a gradient flow, $f_\infty$ is a constrained Willmore immersion. Using that $\norm{f_\infty-f_W}{C^k(\Sigma)}\leq \sigma$, we find by \Cref{thm:Loja} that
	\begin{align}
		\abs{\overline{\CalW}(f_\infty)-\overline{\CalW}(f_W)}^{1-\theta}\leq C \norm{\nabla\overline{\CalW}(f_\infty)-\lambda(f_\infty)\nu_{f_\infty}}{L^2(\diff \mu_{f_\infty})} =0,
	\end{align} so $\overline{\CalW}(f_\infty)=\overline{\CalW}(f_W)$.
\end{proof}

\end{appendices}

\section*{Acknowledgments}
This project has been supported by the Deutsche Forschungsgemeinschaft (DFG,
German Research Foundation), project no. 404870139.
The author would like to thank
Anna Dall’Acqua and Marius M\"uller for many helpful discussions and comments. \add{The author would also like to thank Mattia Fogagnolo for pointing out that the proof of \Cref{thm:conv main} works for initial energy equal to $8\pi$ as well. In addition, the author is grateful to the referee for their careful reading and their valuable comments on the original manuscript.}

\bibliography{Lib}
\bibliographystyle{abbrv}

\end{document}